\newcommand{\R}{\mathbb{R}}
\newcommand{\Q}{\mathbb{Q}}
\newcommand{\N}{\mathbb{N}}
\begin{document}

\title{Accurate Linear Cutting-Plane Relaxations for ACOPF\thanks{This work was supported by an ARPA-E GO award.}}

\author{Daniel Bienstock         \and
        Mat\'ias Villagra
}

% \institute{Daniel Bienstock \at
%               500 W 120th St \#315, New York, NY 10027 \\
%               Tel.: +1 (212) 854-2942\\
%               \email{fauthor@example.com}        
%            \and
%            Mat\'ias Villagra \at
%               500 W 120th St \#315, New York, NY 10027 \\
%               Tel.: +1 (212) 854-2942\\
%               \email{fauthor@example.com}   
% }

\institute{Daniel Bienstock \and Mat\'ias Villagra \at
              500 W 120th St \#315, New York, NY 10027 \\
              Tel.: +1 (212) 854-2942\\
              % Fax: +123-45-678910\\
              \email{dano@columbia.edu, mjv2153@columbia.edu}
}

\date{}
% \date{Received: date / Accepted: date}
% The correct dates will be entered by the editor
\maketitle

\begin{abstract}
    We present a pure linear cutting-plane relaxation approach for rapidly proving tight and accurate lower bounds for the Alternating Current Optimal Power Flow Problem (ACOPF) and its multi-period extension with ramping constraints. Our method leverages outer-envelope linear cuts for well-known second-order cone relaxations for ACOPF together with modern cut management techniques and reformulations to attain numerical stability. These techniques prove effective on a broad family of ACOPF instances, including the largest ones publicly available, quickly and robustly yielding tight bounds. Additionally, we consider the (frequent) case where an ACOPF instance is handled following a small or moderate change in problem data, e.g., load changes and generator or branch shut-offs. We provide significant computational evidence, on single and multi-period ACOPF instances, that the cuts computed on the prior instance provide very good lower bounds when warm-starting our algorithm on the perturbed instance.  
\keywords{Linear Relaxations \and Outer-Approximation \and Cutting-planes \and QCQP \and SOCP \and Electricity Grid Optimization}
% \PACS{PACS code1 \and PACS code2 \and more}
\subclass{90C25 \and 90C05 \and 90C06}
\end{abstract}

\section{Introduction}

The Alternating-Current Optimal Power Flow (ACOPF) problem is a well-known challenging computational task. It is nonlinear, non-convex and with feasible region that may be disconnected; see~\cite{hiskens+davy01},~\cite{bukhsh+etal13} and Proposition~\ref{proposition:nastyACOPF}. 
In~\cite{verma09,bienstock+verma19} it is shown that the feasibility version of the problem is strongly NP-hard, and  ~\cite{lehmann+etal16} proved that this decision problem is weakly NP-hard on star-networks. 

In the current state-of-the-art, some interior point methods are empirically successful at computing very good solutions but cannot provide any bounds on solution quality. Strong lower bounds are available through second-order cone (SOC) relaxations~\cite{jabr06,kocuk+etal16}; however all solvers do struggle when handling such relaxations for large or even medium cases (see~\cite{coffrin+etal16a}; we will provide additional evidence for this point). Other techniques, such as spatial-branch-and-bound methods applied to McCormick (linear) relaxations of quadratically-constrained formulations for ACOPF, tend to yield poor performance unless augmented by said SOC inequalities \textit{and} interior point methods.

In this paper we present a fast (linear) cutting-plane method used to obtain tight and accurate relaxations for even the largest ACOPF instances, and its multi-period extension with ramping constraints, by appropriately handling the SOC relaxations. Our research thrust is motivated by three key observations: 
\begin{itemize} 
\item[(1)] Linearly constrained convex quadratic optimization technology is, at this point, very mature -- many solvers are able to handle massively large instances quickly and robustly; these attributes extend to the case where formulations are dynamically constructed and updated, as would be the case with a cutting-plane algorithm. 
\item[(2)] Whereas the strength of the SOC relaxations has been known for some time, an adequate theoretical understanding for this behavior was not available. 
\item[(3)] In power engineering practice, it is often the case that a power flow problem (either in the AC or DC version) is solved on data that reflects a recent, and likely limited, update on a case that was previously handled. In power engineering language, a 'prior solution' was computed, and the problem is not solved 'from scratch.' In the context of our type of algorithm, this feature opens the door for the use of \textit{warm-started formulations}, i.e., the application of a cutting plane procedure that leverages previously computed cuts to obtain sharp bounds more rapidly than from scratch. 
\end{itemize}

%, which could be used for real-time spot ma

As an additional attribute arising from our work, our formulations are, of course, linear. This feature paves the way for effective \textit{pricing} schemes, i.e., extensions of the locational marginal pricing setup currently used for real-time and day-ahead energy markets~\cite{oneill+etal05,gribik+etal07,bichler+etal23}.

\newpage

\paragraph{Our contributions}
\begin{itemize}
\item We describe very tight and numerically stable linearly-constrained relaxations for ACOPF. The relaxations can be constructed and solved robustly and quickly via a cutting-plane algorithm that relies on proper cut management. On medium to (very) large instances our algorithm is competitive with or better than what was previously possible using nonlinear relaxations, both in terms of bound quality and solution speed. The robustness of our method is especially prominent in the multi-period setting, where our method is able to provide tight lower bounds with high accuracy, while the nonlinear relaxations are simply out of reach for nonlinear solvers.
\item We provide a theoretical justification for the tightness of the SOC relaxation for ACOPF as well as for the use of our linear relaxations.  As we argue, the SOC constraints (or tight relaxations thereof, or equivalent formulations) are \textit{necessary} in order to achieve a tight relaxation.
More precisely, we point out that the (linear) active-power loss inequalities (introduced in \cite{bienstock+munoz14}) are outer-cone envelope approximations for the SOC constraints.  Moreover, the loss inequalities provide a fairly tight relaxation because they imply that every unit of demand and loss is matched by a corresponding unit of generation (a fact not otherwise guaranteed) via a flow-decomposition argument 
 \cite{bienstock15}.  
\item We demonstrate, through extensive numerical testing, that the warm-start feature for our cutting-plane algorithm is indeed achievable and effective. It is worth noting that this capability stands in contrast to what is possible using nonlinear (convex) solvers, in particular interior point methods. We also show numerical convergence of the dual variables of active-power balance constraints (which are used as input for computing locational marginal prices).
\item  The warm-start capability yields practicable and tight relaxations for \textit{multi-period} formulations of ACOPF. The tightness of our formulations is certified through a heuristic for such problems.
\item We bring forth to the ACOPF literature a discussion on approximate solutions to convex relaxations and their accuracy as lower bounds. We argue that linearly-constrained relaxations with convex quadratic objective possess robust theoretical bounding guarantees, in sharp contrast to what nonlinear relaxations such as SOCPs can offer.
\end{itemize}

The structure of this paper is as follows: in Section~\ref{section:theproblem} we formally define ACOPF and its multi-period extension; in Section~\ref{section:literature_review} we review the literature on convex relaxations for ACOPF with emphasis on linear relaxations; in Section~\ref{section:socps} we describe two second-order cone relaxations, provide theoretical justification for the tightness of these relaxations, and introduce a new numerically better-behaved conic relaxation; in Section~\ref{section:cutplane} we describe our cutting-plane framework, as well as the cut separation and cut management heuristics deployed in our algorithm; in Section~\ref{section:accuracy} we provide a discussion on the accuracy of lower bounds of convex relaxations for ACOPF; in Section~\ref{section:experiments} we present significant computational experiments on the performance of SOCPs versus our algorithm and warm-started formulations for the single and multi-period settings; and finally in Section~\ref{section:futurework} we conclude and outline future research directions. Our proofs can be found in the Appendix~\ref{appendix}.

This work significantly extends a much shorter conference version of this paper where the single-period setting is addressed~\cite{bienstock+villagra24}. The relaxation in Section~\ref{subsection:newrelaxation} is new, as well as the multi-period computational results and the sections on accuracy and behavior of the dual variables of active-power balance constraints.

\vspace{1em}

\begin{mdframed}
\captionsetup{type=figure} % Set the caption type to figure
\caption{ACOPF Formulation using polar coordinates}
\begin{subequations}\label{AC:firstformulation}
\small
\begin{align} 
[\text{ACOPF}]: &\hspace{7em}\min \hspace{2em} \sum_{k \in \mathcal{G}} F_{k}(P_{k}^{g}) \label{AC:theobjective} \\
    \text{subject to:}& \nonumber\\
    &\hspace{-4em}\forall \text{  bus $k \in \mathcal{B}$:} \nonumber\\
    &\hspace{6em} \sum_{\{k,m\} \in \delta(k)} P_{km} = \sum_{\ell \in \mathcal{G}_{k}} P_{\ell}^{g} - P_{k}^{d} \label{AC:activepowerbal}\\
    &\hspace{6em} \sum_{\{k,m\} \in \delta(k)} Q_{km} = \sum_{\ell \in \mathcal{G}_{k}} Q_{\ell}^{g} - Q_{k}^{d} \label{AC:reactivepowerbal}\\
    &\hspace{-4em}\forall \text{  branch $\{k,m\} \in \mathcal{E}$:} \nonumber\\
    P_{km} &= G_{kk}|V_{k}|^{2} + |V_{k}| |V_{m}| ( G_{km}  \cos(\theta_{k} - \theta_{m}) +  B_{km} \sin(\theta_{k} - \theta_{m})) \label{AC:def_from_activepower} \\
    P_{mk} &= G_{mm}|V_{m}|^{2} + |V_{k}| |V_{m}|(G_{mk}\cos(\theta_{k} - \theta_{m}) -  B_{mk} \sin(\theta_{k} - \theta_{m})) \label{AC:def_to_activepower} \\
    Q_{km} &= - B_{kk}|V_{k}|^{2} + |V_{k}| |V_{m}|(B_{km}\cos(\theta_{k} - \theta_{m}) -  G_{km} \sin(\theta_{k} - \theta_{m})) \label{AC:def_from_reactivepower}\\
    Q_{mk} &= - B_{mm}|V_{m}|^{2} + |V_{k}| |V_{m}|(B_{mk}\cos(\theta_{k} - \theta_{m}) +  G_{mk} \sin(\theta_{k} - \theta_{m}) ) \label{AC:def_to_reactivepower} \\
    &\hspace{-4em}\forall \text{  generator $k \in \mathcal{G}$:} \nonumber \\ 
    &\hspace{9em} P^{\min}_{k} \leq P_{k}^g \leq P_{k}^{\max}  \label{AC:genactivepowerlimit}\\
    &\hspace{9em} Q^{\min}_{k} \leq Q_{k}^g \leq Q_{k}^{\max}  \label{AC:genreactivepowerlimit}\\
    &\hspace{-4em}\forall \text{  bus $k \in \mathcal{B}$:} \nonumber\\
    &\hspace{9em} (V^{\min}_{k})^{2} \leq |V_{k}|^{2} \leq (V^{\max}_{k})^{2} \label{AC:voltlimit}\\
    &\hspace{-4em}\forall \text{  branch $\{k,m\} \in \mathcal{E}$:} \nonumber\\
    &\hspace{9em} \max \left\{P_{km}^{2} + Q_{km}^{2},P_{mk}^{2} + Q_{mk}^{2} \right\}  \leq U_{km} \label{AC:capacity} \\
    &\hspace{9em} | \theta_{k} - \theta_{m} | \leq \bar{\Delta}_{km} \label{AC:maxanglediff}
\end{align}
\end{subequations}
% \end{tcolorbox}
\end{mdframed}

\section{Formal Problem Statement}\label{section:theproblem}

In standard (single-period) ACOPF we are given, as input, a \textit{power system} consisting of an undirected network endowed with numerical parameters describing physical attributes; a set of \textit{generators}, and a set of complex-valued \textit{loads} (or demands).  The objective of the problem is to set complex voltages at the buses, and generator outputs, so as to satisfy loads at minimum cost -- cost is incurred by the generators -- while transmitting power following laws of physics and equipment constraints. A mathematical description is given above in~\eqref{AC:firstformulation} using the so-called \textit{polar representation}.

We denote by $\mathcal{N} := (\mathcal{B},\mathcal{E})$ the network, where $\mathcal{B}$ denotes the set of nodes, which (following power engineering conventions) we will refer to as \textit{buses}, and $\mathcal{E}$ denotes the set of edges, which we will refer to as transmission \textit{lines} or \textit{transformers}, or generically, \textit{branches}. We denote by $\mathcal{G}$ the set of generators of the grid, each of which is located at some bus;  for each bus $k \in \mathcal{B}$, we denote by $\mathcal{G}_{k} \subseteq \mathcal{G}$ the generators at bus $k$. 
 
Each bus $k$ has a fixed load $P_{k}^d + j Q_{k}^{d}$, where $P_{k}^{d} \geq 0$ is termed active power load, and  $- \infty \leq Q_{k}^{d} \leq + \infty$ is reactive power load; lower $V_{k}^{\min} \geq 0$ and upper $V_{k}^{\max} \geq 0$ voltage limits.  For each branch $\{k,m\}$ we are given thermal limit $0 \leq U_{km} \leq +\infty$, and maximum angle-difference $|\Delta_{km}| \leq \pi$. Thus, the goal is to find voltage magnitude $|V_k|$ and phase angle $\theta_k$ at each bus $k$, active $P^{g}$ and reactive $Q^g$ power generation for every generator $g$, so that power is transmitted by the network so as to satisfy active $P^{d}$ and reactive $Q^{d}$ power demands at minimum cost. 
%Using the so-called \textit{polar representation} we obtain the following nonlinear optimization problem:

In the above formulation~\eqref{AC:firstformulation}, the physical parameters of each line $\{k,m\} \in \mathcal{E}$ are described by
\begin{equation*}
    Y_{km} := \begin{pmatrix}
    G_{kk} + j B_{kk} & G_{km} + j B_{km} \\
    G_{mk} + j B_{mk} & G_{mm} + j B_{mk},
    \end{pmatrix}
\end{equation*}
which is the (complex) admittance matrix of the transmission line $\{k,m\}$ (see~\cite{bergen+vittal99} for background on transmission line modeling). These parameters model, in~\eqref{AC:def_from_activepower}-\eqref{AC:def_to_reactivepower},  active and reactive power flows. Moreover, inequalities~\eqref{AC:capacity}-\eqref{AC:maxanglediff} correspond to flow capacity constraints, and inequalities~\eqref{AC:genactivepowerlimit}-\eqref{AC:voltlimit} impose operational limits on power generation and voltages. Constraints~\eqref{AC:activepowerbal}-\eqref{AC:reactivepowerbal} impose active and reactive power balance; the left-hand side represents power injection at bus $k \in \mathcal{B}$, while the right-hand side represents net power generation (generation minus demand) at bus $k$. Finally, for each generator $k \in \mathcal{G}$, it is customary to assume the functions $F_{k} : \R \to \R$ in the objective~\eqref{AC:theobjective} are convex piecewise-linear or convex quadratic. 

We remark that, often, constraint \eqref{AC:maxanglediff} is not present, and, when explicitly given, concerns angle limits $\bar{\Delta}_{km}$ that are \textit{small} (smaller than $\pi/2$).  Under such assumptions there are equivalent ways to restate \eqref{AC:maxanglediff} involving the arctangent function and other variables present in the formulation (the same applies to convex relaxations).

Please refer to the surveys~\cite{molzahn+hiskens19},~\cite{bienstock+etal20} for alternative, but equivalent, ACOPF formulations.

\subsection{Multi-Period Formulation}\label{multiperiod:formulation}

Our techniques apply to a multi-period setting for ACOPF; this extension is important because it corresponds to the way that energy markets are operated.  We will rely on a simple multi-period setting for ACOPF, where in each period $t$ there is a standard ACOPF formulation. The variables for this period are linked to those other periods $t' \neq t$\footnote{As a simple case, $t' = t-1$.} via generator ramping constraints (defined below). Other variants of this problem include ``uptime/downtime" costs for generators, energy storage reserve requirements, and network topology control, among others (see e.g.~\cite{castillo+etal16,hedman+etal11,go3}).

The multi-period problem can be described as follows: given a time-horizon of $T \in \N$ periods, a time-invariant undirected network endowed with numerical parameters describing physical attributes, and complex network loads $P_{i,t}^{d} + j Q_{i,t}^{d}$ for $i \in \mathcal{B}$ and $t \in \{0,\dots,T-1\}$, the goal is to set complex voltages $V_{i,t}$ at the buses, and generator outputs $P^{g}_{k,t} + j Q^{g}_{k,t}$ for $k \in \mathcal{G}$ and $t \in \{0,\dots,T-1\}$, so as to satisfy loads at minimum cost while transmitting power following laws of physics and equipment costs, and satisfying generator ramping constraints. For every generator $k \in \mathcal{G}$, and any time-period $t \in \{0,\dots,T-1\}$, the associated ramping up and down constraints are given by\footnote{Since $P_{k}^{g}$ can potentially be negative for some generator $g \in \mathcal{G}$, then the latter constraints need to be formulated in terms of the absolute value of $P^{g}_{k}$.}
\begin{subequations}\label{rampingconstraints}
  \begin{align}
    (1 + r^{u}_{k,t}) P^{g}_{k,t} - P^{g}_{k+1,t} &\geq 0 \\
    (1 - r^{d}_{k,t}) P^{g}_{k,t} - P^{g}_{k+1,t} &\leq 0.
\end{align}  
\end{subequations}
where $r_{k,t}^{u},r_{k,t}^{d} \in (0,1)$ are the ramping up and down rates, respectively. In summary, multi-period ACOPF for a time-invariant network $(\mathcal{B},\mathcal{E})$ with rational data $(P^{d}_{i,t},Q^{d}_{i,t})_{i \in \mathcal{B},t \in \{0,\dots,T-1\}}$ and $(Y_{km},U_{km})_{\{k,m\} \in \mathcal{E}}$ can be defined by \eqref{AC:firstformulation} and \eqref{rampingconstraints} on variables $$(V_{i,t},\theta_{i,t})_{i \in \mathcal{B}}, \, (P_{km,t},P_{mk,t},Q_{km,t},Q_{mk,t})_{\{k,m\} \in \mathcal{E}}, \, (P^{g}_{k,t},Q^{d}_{k,t})_{k \in \mathcal{G}}$$ for each $t \in \{0,\dots,T-1\}$.

\section{Literature Review}\label{section:literature_review}

First, we will briefly review the extensive literature on convex relaxations for ACOPF. An extensive review is provided in \cite{molzahn+hiskens19}. 

The simplest relaxations use, a starting point, a rectangular formulation of the ACOPF problem (rather than the polar setup described above) yielding a QCQP (quadratically constrained quadratic program) and rely on the well-known McCormick \cite{mccormick76} reformulation to linearize bilinear expressions. This straightforward relaxation has long been known to provide very weak bounds. 

The SOC relaxation introduced in~\cite{jabr06}, which is widely known as the Jabr relaxation (see Section \ref{subsection:jabrsocp}), has had significant impact due to its effectiveness as a lower bounding technique. Briefly, it linearizes the power flow definitions \eqref{AC:def_from_activepower}-\eqref{AC:def_to_reactivepower} using $|\mathcal{B}| + 2 |\mathcal{E}|$ additional variables and adds $|\mathcal{E}|$ valid SOC inequalities (c.f. \ref{subsection:jabrsocp} for a complete derivation). Moreover, in~\cite{jabr06} it is shown that for radial transmission networks, i.e., networks that have a tree structure, the load flow problem can be solved via an SOCP whose conic constraints correspond to the latter SOC inequalities. While on the one hand the SOC relaxation is strong, it also yields formulations that, in the case of large ACOPF instances, are very challenging even for the best solvers.

A wide variety of techniques have been proposed to strengthen the Jabr relaxation. In~\cite{kocuk+etal16} \emph{arctangent constraints} are associated with cycles, with the goal of capturing the relationship between the additional variables in the Jabr relaxation and phase angles -- these are formulated as bilinear constraints, and then linearized via McCormick inequalities. The two other strengthened SOC formulations proposed in~\cite{kocuk+etal16} add polyhedral envelopes for \emph{arctangent functions}, and dynamically generate semi-definite cuts for cycles in the network. \cite{kocuk+etal18} proposes a \emph{minor-based} formulation for ACOPF (which is a reformulation of the rank-one constraints in the QCQP formulation for ACOPF~\cite{molzahn+hiskens19}), which is relaxed to generate cutting-planes improving on the tightness of the Jabr relaxation. \cite{coffrin+etal16a} introduced the Quadratic Convex (QC) relaxation   (an SOC relaxation) which is related to the Jabr relaxation (c.f. \ref{subsection:i2socp}), strengthened with polyhedral envelopes for sine, cosine and bilinear terms appearing in the power flow definitions \eqref{AC:def_from_activepower}-\eqref{AC:def_to_reactivepower}. A novel machine learning-based approach is followed in~\cite{qiu+etal24} to obtain fast dual bounds for the Jabr relaxation for cases with up to 2869 buses.

A semidefinite programming relaxation based on the \emph{Shor relaxation}~\cite{shor87} is presented in  in~\cite{bai+etal08} followed by~\cite{lavaei+low12}.  This formulation is at least as tight as the Jabr relaxation at the expense of higher computational cost~\cite{kocuk+etal18}. In~\cite{lavaei+low12} it is proven that the SDP relaxation achieves zero duality gap under appropriate network assumptions. Experiments with SDP relaxations have been constrained by current SDP technology capabilities.

% \color{red} (add e.g. Josz et al and cite some more computational papers ...). \color{black}

% Works that have leveraged sparsity of the network Jabr(2012), Molzhan et al. (2013) and Madani et al. (2014b) and Molzhan and Hisken (2015) and Madani et al (2015) ---> techniques such as chordal graph extension, tree-width decomp and matrix completion etc.

Next we review linear relaxations for ACOPF. \cite{bienstock+munoz14,bienstock+munoz15} (also see \cite{molzahn+hiskens19}) introduce the so-called active-power loss linear inequalities which impose the fact that on any branch the active power loss is nonnegative. The resulting relaxation, which we term the linear-loss-relaxation, is shown to yield good lower bounds. 
%A related result concerns path-decompositions of active power flows \cite{bienstock15} that match generation (including negative losses in a relaxation) to loads and positive losses. 
In a similar same vein, \cite{coffrin+etal16b} propose the network flow and the copper plate relaxations. The network flow relaxation amounts to the linear loss-relaxation with additional sparse linear inequalities that lower bound net reactive power losses in appropriate cases\footnote{Note that it is physically possible to have reactive power gains, i.e., negative reactive losses, see~\cite{bergen+vittal99}).}. Moreover, the ``copper plate" relaxation is obtained from the network flow relaxation by neglecting the power flow equations entirely via aggregation of all active and reactive power injections in the network. Along these lines \cite{taylor+hover11} provides a relaxation which enforces a (valid) linear relationship between active and reactive power losses by relaxing linear combinations of \eqref{AC:def_from_activepower}-\eqref{AC:def_to_reactivepower}. 

The technique in ~\cite{bienstock+munoz14}, inspired by Glover~\cite{glover75}, $\epsilon$-approximates the products of continuous variables (arising from the rectangular formulation of ACOPF \cite{molzahn+hiskens19}), to arbitrary precision, using binary expansions and McCormick inequalities. This process yields a mixed integer linear $\epsilon$-approximation for ACOPF. Another linear $\epsilon$-approximation, which is based on the Jabr relaxation, is used in~\cite{mhanna+chapman16}. Their main contribution is using the SOC linear approximation developed in~\cite{bental+nemirovski01} which requires $O(k_{\ell} \log(1/\epsilon) )$ linear constraints and additional variables to $\epsilon$-approximate a conic constraint of row size $k_\ell \in \N$. One of the algorithms in~\cite{mhanna+mancarella22} is an SLP method applied to the Jabr relaxation, and thus yielding a linear relaxation for ACOPF.

% R. J. Burek, "A linear model for optimal power flow solutions," IEEE Transactions on Power Apparatus and Systems, vol. 83, no. 5, pp. 417-423, May 1964.
% P. D. Huneault, "Optimal power dispatch," IEEE Transactions on Power Apparatus and Systems, vol. 83, no. 11, pp. 1231-1239, November 1964.
%Double check authors of the copper plate relax

The well-known, and widely used in practice, Direct Current Optimal Power Flow (DCOPF) model is a linear approximation for ACOPF~\cite{stott+etal09,cain+etal12}. It is based on a number of simplifications that are approximately valid under normal system operations; under this formulation active power losses are zero and reactive power is completely ignored~\cite{bergen+vittal99}. A main drawback of this formulation is that AC feasible solutions might not even be feasible for DCOPF~\cite{baker20} -- losses are the main cause of infeasibility -- in contrast to the previously mentioned linear $\epsilon$-approximations. In any case, this linear model is the canonical approximation of ACOPF for many extensions and applications such as welfare maximization and pricing in energy markets, day-ahead security-constrained unit commitment (SCUC), real-time security-constrained
dispatch (SCD) and transmission switching (TS) among many others.

We refer the reader to the surveys~\cite{molzahn+hiskens19,bienstock+etal20,zohrizadeh+etal20} for additional material on convex relaxations for single-period ACOPF.\\

Single-period convex relaxations can naturally be extended to multi-period ACOPF. To the best of our knowledge, all of the computational experiments address small to medium-sized cases -- which is not surprising given that the single-period (nonlinear) convex relaxations are already challenging enough (see e.g. \ref{subsubsection:performance_socps}). It is worth mentioning that most of the research in the multi-period setting has focused on obtaining AC primal bounds for numerous multi-period ACOPF features. Multi-period Jabr SOCPs are solved in:~\cite{lorca+sun17} to find approximate solutions to a robust multi-period ACOPF;~\cite{constante+etal22} and~\cite{bai+etal15} as subproblems of Benders decomposition algorithms for AC-constrained UC;~\cite{quan+etal14} for SOC-constrained UC; and a multi-period mixed integer Jabr SOCP is solved~\cite{liu+etal18} for AC-constrained UC.% A multi-period Jabr SOCP is used in~\cite{lorca+sun17} to find approximate solutions to a robust multi-period ACOPF. Multi-period Jabr SOCPs are also solved in~\cite{constante+etal22} and~\cite{bai+etal15} as a subproblem in a Benders decomposition algorithm for AC network-constrained UC, in~\cite{quan+etal14} for an SOC-constrained UC \textcolor{blue}{I don't understand this sentence}, and a multi-period mixed integer Jabr SOCP is solved~\cite{liu+etal18} for AC network-constrained UC. 
Moreover, \cite{zohrizadeh+etal19} solves a sequence of Jabr SOCP relaxations using penalty terms to enforce AC feasibility. %A notable third-order SDP relaxation, stronger than the Jabr SOCP, is called as a subroutine in~\cite{madani+etal17} for obtaining AC-constrained UC feasible solutions on the largest PEGASE case~\cite{josz+etal16,fliscounakis+13} with up to thirteen thousand buses over a $24$ hour period. To the best of our knowledge, this is largest instance on which a nonlinear relaxation, as strong as the MP Jabr SOCP, has been reported as solved. 
%\color{red} This is delicate, because I want to stress here that they do not report on the solution quality of their lower bounds, and actually in their logfiles it can be checked that Mosek declared unknown solution status (and a big complementarity gap..). So I want to say that their lower bound is meaningless.. Moreover, they use this lower bounds in a discussion on  optimality gaps ... \color{black}. 
On the other hand, notable linear approximations for multi-period ACOPF are the copper-plate linear, DCOPF, and strengthened versions of DCOPF~\cite{yang+etal2018} which account for reactive power and voltage magnitudes. These approximations are key ingredients for solving scalable UC or Security Constrained ACOPF (SC-ACOPF). In~\cite{alizadeh+capitanescu23} a linearized multi-period Security Constrained Stochastic ACOPF based on the strengthened DC approximation~\cite{yang+etal2018} is presented. \\

%common trait in these papers: they do not present the feasibility tol for AC
%list reported violation / feasibility tolerace for AC sols...

% Next, we briefly review the literature and current solver technology for computing primal solutions to single-period ACOPF.

%Murillo-Sanchez and Thomas are not solving a convex relaxation of ACOPF, they directly solving for MP AC solutions..

Next, we succinctly review literature on heuristics -- mostly based on a convex relaxations or linear appromixations -- used to find AC feasible solutions. In the single-period setting, the linear programming Hot-Start and Warm-Start approximation models are proposed in~\cite{coffrin+vanhentenryck14} for finding locally optimal single-period AC solutions. The Hot-Start model assumes there is a solved AC base-point solution available, and it uses to fix voltage magnitudes in the power AC power flow definitions. Morevoer, they approximate $\sin(\theta)$ by $\theta$ and use a linear convex approximation of $\cos(\theta)$ (under the assumption $\theta \in (-\pi/2, \pi/2)$). Since fixing the voltages might be too restrictive in some cases, their Warm-Start model chooses takes a subset of the buses as target buses, and linearizes deviations from the targeted voltage magnitudes. In~\cite{castillo+etal16} and~\cite{mhanna+mancarella22} successive linear programming (SLP) algorithms are proposed for finding locally optimal single-period AC solutions. 

With respect to the multi-period setting,~\cite{castillo+etal16b} presents an outer-approximation iterative algorithm for the UC problem with AC network constraints (i.e, an multi-period ACOPF is being solved) -- this method solves a finite sequence of MILP master problems and nonlinear subproblems (ACOPF with fixed binary variables). The nonlinear subproblems are tackled using the SLP (sequential linear programming algorithm) presented in~\cite{castillo+etal16a}. Moreover, in~\cite{constante+etal22} an SLP is used as a penalty method to recover a UC problem with AC-constraints feasible solution from the multi-period Jabr relaxation. This is different from the technique used in~\cite{zohrizadeh+etal19} where a multiobjective SOCP relaxation is used to obtain an AC feasible solution. In~\cite{alguacil+conejo00} a Benders decomposition algorithm is used for AC-constrained UC -- the subproblem is a multi-period ACOPF on a small network. Note that all of the previously mentioned references on convex relaxations of some multi-period ACOPF variant are used for AC feasibility. 

Two general purpose nonlinear solvers which can be used to find solutions for single and multi-period ACOPF are Knitro~\cite{knitro} and IPOPT~\cite{ipopt}.

% \color{red} 
% Discuss a few heuristics used to obtain AC solutions, and solver technology. Moreover, mention papers where multi-time period AC has been used (multi-period ACOPF nothing on it, hence we need to look for UC or SCACOPF).
% \color{black}

%Check these papers as well ...
% Z. Qin, Y. Hou, and Y. Chen, “Convex Envelopes of Optimal Power Flow with Branch Flow Model in Rectangular Form,” in IEEE Power & Energy Society General Meeting, (Denver, CO, USA), pp. 1–5, July 2015
% Q. Gemine, D. Ernst, Q. Louveaux, and B. Cornélusse, “Relaxations for Multi-Period Optimal Power Flow Problems with Discrete Decision Variables,” in 18th Power Systems Computation Conference (PSCC), (Wroclaw, Poland), pp. 1–7, August 2014.
%M. Bynum, A. Castillo, J.-P. Watson, and C. D. Laird, “Strengthened SOCP Relaxations for ACOPF with McCormick Envelopes and Bounds Tightening,” in 13th International Symposium on Process Systems Engineering 

\section{Convex Relaxations and Power Losses}\label{section:socps}

\subsection{Jabr relaxation}\label{subsection:jabrsocp}

A well-known convex relaxation of ACOPF is the \emph{Jabr relaxation}~\cite{jabr06}. It linearizes the power flow definitions \eqref{AC:def_from_activepower}-\eqref{AC:def_to_reactivepower} using $|\mathcal{B}| + 2 |\mathcal{E}|$ additional variables and adds $|\mathcal{E}|$ rotated-cone inequalities. A simple derivation is as follows: For any line $\{k,m\} \in \mathcal{E}$, we define 
\begin{equation}\label{jabrimplicit}
v_{k}^{(2)} := |V_{k}|^2, \, c_{km} := |V_{k}| |V_{m}| \cos(\theta_{k} - \theta_{m}), \, s_{km} := |V_{k}| |V_{m}| \sin(\theta_{k} - \theta_{m}).
\end{equation}
Clearly we have the following valid non-convex quadratic relation
\begin{equation}
    c_{km}^{2} + s_{km}^{2} = v_{k}^{(2)} v_{m}^{(2)},
\end{equation}
which in Jabr \cite{jabr06} is relaxed into the (convex) inequality
\begin{equation}
    c_{km}^{2} + s_{km}^{2} \leq v_{k}^{(2)} v_{m}^{(2)}. \label{SOCP:jabr}
\end{equation}
This is a rotated-cone inequality hence it can be represented as a second-order cone constraint. Note that~\eqref{jabrimplicit} can be used to represent the power flow equations in \eqref{AC:def_from_activepower}-\eqref{AC:def_to_reactivepower} as, $\forall \{k,m\} \in \mathcal{E}$,
\begin{subequations}\label{JABR:flows}
\begin{align}
    P_{km} &= G_{kk}v_{k}^{(2)} + G_{km}c_{km} +  B_{km} s_{km} \label{SOCP:def_to_activepower}\\
    P_{mk} &= G_{mm}v_{m}^{(2)} + G_{mk}c_{km} -  B_{mk} s_{km} \label{SOCP:def_from_activepower} \\
    Q_{km} &= - B_{kk}v_{k}^{(2)} + B_{km}c_{km} -  G_{km} s_{km} \label{SOCP:def_to_reactivepower}\\
    Q_{mk} &= - B_{mm}v_{m}^{(2)} + B_{mk} c_{km} +  G_{mk} s_{km}. \label{SOCP:def_from_reactivepower}
\end{align}
\end{subequations} 

\noindent In summary, the Jabr relaxation can be obtained from the formulation \eqref{AC:firstformulation} by: (i) adding the $c_{km}, s_{km}, v^{(2)}_k$ variables;
(ii) replacing \eqref{AC:def_from_activepower}-\eqref{AC:def_to_activepower} with \eqref{JABR:flows}; and (iii) adding the constraints \eqref{SOCP:jabr}.\footnote{We stress that \eqref{jabrimplicit} is \textit{not} added.}

% \begin{align}
% [\text{SOCP}]: &\hspace{9em}\min \hspace{2em} \sum_{k \in \mathcal{G}} F_{k}(P_{k}^{g}) \label{SOCP:objective}\\
%     \text{subject to:}& \nonumber\\
%     &\forall \text{  bus $k \in \mathcal{B}$:} \nonumber\\
%     &\hspace{9em} \sum_{\{km\} \in \delta(k)} P_{km} = \sum_{\ell \in \mathcal{G}_{k}} P_{\ell}^{g} - P_{k}^{d} \label{SOCP:activepowerbal}\\
%     &\hspace{9em} \sum_{\{km\} \in \delta(k)} Q_{km} = \sum_{\ell \in \mathcal{G}_{k}} Q_{\ell}^{g} - Q_{k}^{d} \label{SOCP:reactivepowerbal}\\
%     &\forall \text{  generator $k \in \mathcal{G}$:} \nonumber \\ 
%     &\hspace{9em} P^{\min}_{k} \leq P_{k}^g \leq P_{k}^{\max}  \label{SOCP:genactivepowerlimit}\\
%     &\hspace{9em} Q^{\min}_{k} \leq Q_{k}^g \leq Q_{k}^{\max}  \label{SOCP:genreactivepowerlimit}\\
%     &\forall \text{  bus $k \in \mathcal{B}$:} \nonumber\\
%     &\hspace{9em} (V^{\min}_{k})^{2} \leq v_{k}^{(2)} \leq (V^{\max}_{k})^{2} \label{SOCP:voltlimit}\\
%     &\forall \text{  branch $\{k,m\} \in \mathcal{E}$:} \nonumber\\
%     &\hspace{9em} \max \left\{P_{km}^{2} + Q_{km}^{2},P_{mk}^{2} + Q_{mk}^{2} \right\}  \leq U_{km} \label{SOCP:capacity}
% \end{align}

\subsection{An alternative SOCP}\label{subsection:i2socp}

Recall that complex power injected into branch $\{k,m\} \in \mathcal{E}$ at bus $k \in \mathcal{B}$ is defined by ${S_{km} := V_{k} I^{*}_{km}}$, hence
\begin{equation}\label{eq:power2def}
    |S_{km}|^{2} = |V_{k}|^{2} | I_{km} |^{2}
\end{equation}
 holds. Moreover, since complex power can be decomposed into active and reactive power as $S_{km} = P_{km} + j Q_{km}$, and recalling that $v_{k}^{(2)} := |V_{k}|^{2}$ while denoting $i_{km}^{(2)} := |I_{km}|^{2}$, we have
\begin{equation}\label{fundamentalpowereq}
    P_{km}^{2} + Q_{km}^{2} = v_{k}^{(2)} i_{km}^{(2)}.
\end{equation}
By relaxing the equality \eqref{fundamentalpowereq} we obtain the \emph{i2 rotated-cone inequality}~\cite{coffrin+etal16a,farivar+etal11}
\begin{equation}\label{i2ineq}
    P_{km}^{2} + Q_{km}^{2} \leq v_{k}^{(2)} i^{(2)}_{km}.
\end{equation}
Since the variable $i_{km}^{(2)}$ can be defined by
\begin{equation}\label{quicki2def}
     i_{km}^{(2)} = \alpha_{km} v_{k}^{(2)} + \beta_{km} v_{m}^{(2)} + \gamma_{km} c_{km} + \zeta_{km} s_{km}
\end{equation}
for appropriate constants  $\alpha_{km}, \beta_{km}, \gamma_{km}$ and $\zeta_{km}$ (c.f.~\ref{appendix:i2def}), we obtain an alternative SOC relaxation. This formulation, which we call the \emph{i2 relaxation}, is comprised by \eqref{AC:theobjective}-\eqref{AC:reactivepowerbal},~\eqref{AC:genactivepowerlimit}-\eqref{AC:capacity}, the definition of $i_{km}^{(2)}$~\eqref{quicki2def}, and the rotated-cone inequalities~\eqref{i2ineq}. 

It is known~\cite{subhonmesh+etal12,coffrin+etal16a,coffrin+etal15a} that for each branch $\{k,m\}$ the system defined by the linearized power flows \eqref{JABR:flows} plus the  Jabr inequality~\eqref{SOCP:jabr}, and the system defined by the linearized power flows \eqref{JABR:flows}, the rotated-cone inequality~\eqref{i2ineq} and the linear definition of $i^{(2)}_{km}$~\eqref{appendix:i2def_eq}, are equivalent. It is to be noted that by this we mean that for each feasible solution to one system there is a feasible solution to the other one. However,  equivalence may fail to hold if $i^{(2)}$ is upper bounded.

\begin{proposition}\label{proposition:i2_stronger_than_jabr}
    The Jabr and the $i2$ relaxations are equivalent if the $i^{(2)}$ variables are not upper bounded, and otherwise the $i2$ relaxation can be strictly stronger.
\end{proposition}
\begin{proof}
    Sufficiency was proven in~\cite{coffrin+etal16a}. For an example where the i2 relaxation is strictly stronger than the Jabr relaxation see~\ref{appendix:i2strongerjabr}.
\end{proof}

Our computational experiments corroborate this fact; we have found that linear outer-approximation cuts for the rotated-cone inequalities \eqref{SOCP:jabr} and \eqref{i2ineq} have significantly different impact in lower bounding ACOPF (c.f. Sect.~\ref{section:experiments}).

\subsection{Losses}\label{subsection:losses}

Transmission line losses are a structural feature of power systems~\cite{bergen+vittal99}. It is a physical phenomenon in which the (complex) power sent from bus $k$ to bus $m$ along branch $km$ will not necessarily be equal to the power received at bus $m$. This feature of power grids is captured (in steady state) by the power flow equations~\eqref{AC:def_from_activepower}-\eqref{AC:def_to_reactivepower}. Indeed, active-power loss on line $\{k,m\}$ can be defined as the quantity 
\begin{equation}\label{loss:definition}
    \ell_{km} := P_{km} + P_{mk}.
\end{equation}

First, we will focus on active-power losses. It is well-known (see, e.g., ~\cite{farivar+etal11,bienstock+munoz14}) that for a simple transmission line, i.e., a non-transformer branch without shunts, active-power loss equals
\begin{equation}\label{loss:2}
    \ell_{km} = g_{km} | V_{k} - V_{m} |^{2}
\end{equation}
where $g_{km}$ denotes line conductance (for simple lines we have $G_{kk} = G_{mm} =: g_{km})$. Thus, if $g_{km} \geq 0$ (i.e., the standard case ~\cite{bergen+vittal99,zimmerman+etal11}), the linear inequality $P_{km} + P_{mk} \geq 0$ is valid for ACOPF. This inequality still holds under (traditional) branch shunts and transformers provided that $g_{km} \geq 0$. In an arbitrary relaxation to ACOPF we might have $\ell_{km} < 0$, and, as we demonstrate below; this feature will make the relaxation weak. Moreover, critically, Proposition \ref{prop:jabrouter} below shows that the Jabr inequalities imply nonnegative active power losses as outer-envelope inequalities.\\

Additionally, the following theorem in~\cite{bienstock15} sheds light on the relationship between active-power losses and (global) active power balance.  Consider an ACOPF instance on an undirected network $\mathcal{N} = (\mathcal{B},\mathcal{E})$. Let us subdivide each branch $\{k,m\}$ by introducing a new \textit{node}, denoted $n_{km}$.  To properly place this result, we consider a solution to any relaxation for ACOPF. \textit{Any} relaxation is of interest (even an exact relaxation) so long as variables $P_{km}$ and $P_{mk}$ are present, and active power flow balance constraints~\eqref{AC:activepowerbal} are given (or implied). Now, given a feasible solution to the relaxation we assign real flow values to the edges of the subdivided network, and orient those edges, as follows. Consider an arbitrary branch $\{k,m\}$. 
\begin{itemize}
    \item [(1)] If $P_{km} \ge 0$ the edge between $k$ and $n_{km}$ is oriented from $k$ to $n_{km}$ and assigned flow $P_{km}$. If $P_{km} < 0$ the edge between $k$ and $n_{km}$ is oriented from $n_{km}$ to $k$ and assigned flow $-P_{km}$.
    \item [(2)] Similarly, if $P_{mk} \ge 0$  the edge between $m$ and $n_{km}$ is oriented from $m$ to $n_{km}$ and assigned flow $P_{mk}$. If $P_{mk} < 0$ the edge between $m$ and $n_{km}$ is oriented from $n_{km}$ to $m$ and assigned flow $-P_{mk}$.
\end{itemize}
Thus, each edge in the subdivided network is oriented in the direction of the flow value that was assigned.  Using the well-known network flow-decomposition theorem~\cite{ahuja+etal93} yields that the flows in the subdivided network can be decomposed into a sum of \textit{flow-carrying paths}, i.e., directed paths where
\begin{itemize}
    \item Each path starts from a \textit{source} (a bus with positive net generation or a node on a branch with negative loss) and ends at a \textit{sink} (a bus with negative net generation or a node on a branch with positive loss),
    \item Each path carries a fixed positive flow amount.
\end{itemize} 
In summary, 
 
\begin{theorem}[Theorem 1.2.11~\cite{bienstock15}]
    Consider any relaxation to an ACOPF instance that includes, or implies, active power flow balance constraints \eqref{AC:activepowerbal}. Then
\begin{itemize}
    \item[(a)]The sum of active-power generation minus negative active power losses amounts equals the sum of active-power loads plus positive active-power losses. Furthermore, there exists a family of flow-carrying paths that accounts for all the flows in the subdivided network.   
    \item[(b)] There is a similar statement regarding reactive power flows (paraphrased here for brevity). The sum of reactive-power generation amounts plus the sum of reactive-power generated by line shunt elements, equals the sum of reactive-power loads and losses. Furthermore, there exist a family of paths that account for all reactive-power generation, loads, and losses. Each path has as origin a generator (with positive reactive-power generation) or a shunt element, and as destination a bus with a positive reactive-power load or a line loss. If all shunt susceptances are zero, then each path has a bus with positive reactive-power generation as origin. 
\end{itemize}
\end{theorem}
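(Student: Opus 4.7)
The plan is to reduce the statement to a direct application of the classical flow-decomposition theorem on the subdivided network. Given any feasible solution to the relaxation, I would first verify that the orientation-and-value assignment described in the passage preceding the theorem defines a legitimate nonnegative flow on the subdivided network; then I would identify the source/sink profile of this flow; and finally I would extract the claimed path family via the flow-decomposition theorem of~\cite{ahuja+etal93}.

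For part~(a), the active-power balance~\eqref{SOCP:activepowerbal} at every real bus $k$ implies that the net algebraic flow leaving $k$ (counted with the sign convention used to build the oriented arcs incident to $k$) equals $\sum_{\ell\in\mathcal{G}_{k}} P_\ell^g - P_k^d$, so $k$ is a source of magnitude (generation~$-$~load) when this quantity is positive and a sink of magnitude (load~$-$~generation) when it is negative. At each subdivision node $n_{km}$, the algebraic inflow is by construction $P_{km}+P_{mk}=\ell_{km}$, so $n_{km}$ is a sink of size $\ell_{km}$ when $\ell_{km}>0$ and a source of size $-\ell_{km}$ when $\ell_{km}<0$. Invoking flow decomposition expresses the flow as a finite family of source-to-sink paths plus cycles; discarding the cycles (they transfer no net mass) and equating total source mass with total sink mass produces
\[
\sum_{k\in\mathcal{G}} P_k^g \;+ \sum_{\{k,m\}:\,\ell_{km}<0}(-\ell_{km}) \;=\; \sum_{k\in\mathcal{B}} P_k^d \;+ \sum_{\{k,m\}:\,\ell_{km}>0}\ell_{km},
\]
which is the identity claimed in~(a); the surviving paths are the desired flow-carrying paths.

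For part~(b), I would repeat the construction with $Q$ replacing $P$, treating reactive power generated by shunt elements as additional exogenous sources at the corresponding bus, or at the adjacent subdivision node when the shunt sits on a line. Bus-level conservation then follows from~\eqref{SOCP:reactivepowerbal} augmented by the shunt contribution, while $n_{km}$ receives inflow equal to the reactive-power loss $Q_{km}+Q_{mk}$. A second application of the flow-decomposition theorem produces the claimed path family, and when all shunt susceptances vanish the only surviving sources are buses with positive reactive-power generation, giving the final assertion.

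The hard part, as I see it, is the bookkeeping required in~(b). Unlike active power, reactive power can effectively be generated on a branch even in the absence of explicit shunt elements, because the diagonal susceptance terms $-B_{kk} v_k^{(2)}$ and $-B_{mm} v_m^{(2)}$ in~\eqref{SOCP:def_to_reactivepower}-\eqref{SOCP:def_from_reactivepower} already fold capacitive behavior into the branch flows themselves. One must therefore carefully attribute each such term to either a bus source, a branch source, or part of the net branch loss before the orientation-and-value rule yields a bona fide nonnegative flow on which the decomposition theorem can be invoked; the rest of the argument is essentially the same as in~(a).
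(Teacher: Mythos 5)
Your proposal follows essentially the same route as the paper: the paper itself only sketches the argument by constructing the subdivided network, orienting each half-branch by the sign of $P_{km}$, and invoking the flow-decomposition theorem of~\cite{ahuja+etal93} (deferring details to Theorem 1.2.11 of~\cite{bienstock15}), which is precisely your construction, source/sink identification, and mass-balance identity for part~(a). Your added care in discarding circulation cycles and in flagging the shunt/susceptance bookkeeping for part~(b) is consistent with, and if anything slightly more explicit than, the paper's paraphrased treatment.
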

The relevance of (a) is that when negative losses are present, then total generation may be smaller than total loads -- effectively, the negative losses amount to a source of free generation and directly contribute to a lower objective value for ACOPF than AC feasible. While this point is also made in~\cite{coffrin+etal16a} regarding a particular relaxation, the above theorem provides a clear explanation for this fact that applies to \textit{any} relaxation. 

As an experiment, suppose we remove, from the Jabr relaxation, just one of the SOC constraints, say for branch $\{\hat k, \hat m\}$, thus obtaining a weaker relaxation; and suppose that, furthermore, the branch limits $U_{km}$ are large (the usual case).  Then, as we substantiate next, it is quite likely that the loss on $\{\hat k, \hat m\}$ will be negative in the weakened relaxation and the value of the relaxation will be (much) weaker -- precisely because that phenomenon would allow a solution with less generation, and thus, lower cost, than otherwise possible.  

Table~\ref{table:experiments_losses_1} provides empirical verification; it reports on 100 experiments in which exactly one Jabr constraint is removed from the formulation. ``Avg Loss" represents the average loss across all branches; ``Avg (br)" and "Min (br)" are the average and minimum (resp.) loss at the branch whose SOC constraint was bypassed; and ``Jabr" denotes the value of the Jabr relaxation, while ``weak Jabr" is the average value of the 100 weakened Jabr SOCs. For reference, ``AC-L" represents the total loss of an AC locally optimal solution, while ``SOC-L" is the total loss of an optimal solution to the Jabr SOC.

\begin{table}[htbp]
    \centering
    \setlength{\tabcolsep}{6pt}
    \caption{Average and minimum losses 100 repetitions of removing a randomly selected SOC constraint.}
    \resizebox{\textwidth}{!}{
    \begin{tabular}{l r r r r r r r}
        \toprule
        {Case} & \multicolumn{1}{c}{Avg Loss} & \multicolumn{1}{c}{Avg (br)} & \multicolumn{1}{c}{Min (br)} & \multicolumn{1}{c}{Jabr} & \multicolumn{1}{c}{weak Jabr} & \multicolumn{1}{c}{AC-L} & \multicolumn{1}{c}{SOC-L} \\
        \midrule
        case14  & -0.3808  & -0.4906 & -1.7443 & 8075.12 & 6292.78 & 0.0929 & 0.0918 \\ %minloss -1.1190, minloss branch -1.2222
        case118  & 0.1084 & -0.7046 & -5.1803 & 129340.00 & 126982.72 & 0.7740 & 0.7125 \\ %minloss -4.5642 minlossb -5.5542
        case300 & 1.8485 & -1.1652 & -6.1421 & 718654.00 & 714858.26 & 3.0274 & 2.8064  \\ %minloss -3.8647 minlossb -9.6906
        % case1354pegase     & 10.0968           & 9.4733  & 7.1885 & -6.0942 & 74007 & 73778.52    \\ %minloss -6.0070, minloss branch -34.0560
        % case2383wp     & 8.3026           & 5.2749  & 4.7892 & -0.5631     \\ %minloss 3.6066, minloss branch -1.9330
        \bottomrule
    \end{tabular}
    }
    \label{table:experiments_losses_1}
\end{table}

In Table~~\ref{table:experiments_losses_2} we provide additional evidence in the same direction. We report on $100$ experiments, for larger cases, where  we remove $100$ Jabr constraints randomly selected using the uniform distribution. To highlight the relative orders of magnitude of the losses, we exhibit the total load for each case. The column ``Sample" is an average over all the samples of the $100$ repetitions.

\begin{table}[htbp]
    \centering
    \setlength{\tabcolsep}{6pt}
    \caption{Average and minimum losses 100 repetitions of removing 100 randomly selected SOC constraints.}
    \resizebox{\textwidth}{!}{
    \begin{tabular}{l r r r r r r r r}
        \toprule
        & & \multicolumn{4}{c}{Weak Jabr} & \multicolumn{2}{c}{Jabr} & \multicolumn{1}{c}{AC} \\
        \cmidrule(l{0.5em}r{0.40em}){3-6} \cmidrule(l{0.5em}r{0.40em}){7-8} \cmidrule(l{0.5em}r{0.40em}){9-9}
        & & \multicolumn{2}{c}{Avg Losses} & \multicolumn{1}{c}{Min Losses} & &  & \\
        \cmidrule(l{0.5em}r{0.40em}){3-4} \cmidrule(l{0.5em}r{0.40em}){5-5} 
        {Case} & Load & \multicolumn{1}{c}{Total} & \multicolumn{1}{c}{Sample} & \multicolumn{1}{c}{Sample} & \multicolumn{1}{c}{Objective} & \multicolumn{1}{c}{Objective} & \multicolumn{1}{c}{Losses} & \multicolumn{1}{c}{Losses} \\
        \midrule
        1354pegase & 730.60 & -150.38 & -379.60 & -764.52 & 58021.55 & 74008.58 &  9.49 & 10.09 \\ 
        2869pegase & 1324.37 & -197.83 & -402.59 & -1119.85 & 112663.19 & 133872.69 & 14.24 & 15.51  \\
        3375wp & 483.63 & -65.50 & -117.11 & -343.72 & 6467352.82 & 7385372.70 & 6.90 & 8.30  \\
        \bottomrule
    \end{tabular}
    }
    \label{table:experiments_losses_2}
\end{table}

\color{black}

\noindent The following example sheds light on the importance of imposing the non-negative active-power loss requirement. 

\begin{remark}
Solving the McCormick relaxation of the rectangular QCQP formulation~\cite{molzahn+hiskens19} of case3 from MATPOWER, we obtain an optimal solution with zero active-power generation. In particular\footnote{Since total active-power demand is $1.00$ and there is a bus shunt at bus $2$ with shunt conductance $0.1$, and the magnitude of voltage at bus 2 squared is $0.8211$ in our solution, we have that generation equals losses plus the sum of the loads.}, the active-power losses are
\begin{align*}
    \ell_{12} = 0.0789, \quad \ell_{13} = -1.1941, \quad \ell_{23} = 0.0331,
\end{align*}
i.e., total active-power losses equals $-1.0821$.

% \begin{figure}[H]
%   \centering
%   \begin{minipage}[b]{0.6\textwidth}
%     \includegraphics[width=\textwidth]{circuit.pdf} 
%         \caption{Circuit representation of case3 from MATPOWER.}
%   \end{minipage}
% \end{figure}

\begin{figure}[H]
\centering
\includegraphics[scale=1]{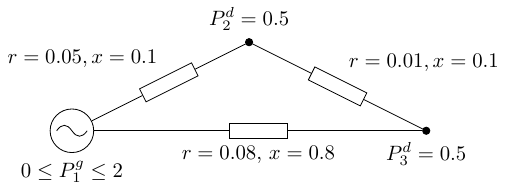} 
\caption{Circuit representation of case3 from MATPOWER.}
\end{figure}

% \begin{figure}[H]
%     \centering
% \begin{tikzpicture}[circuit ee IEC, scale=1.5]
%    % Define buses
%    \node [ac source={info'={\small $0 \leq P_{1}^{g} \leq 2$}}] (bus1) at (0,0) {};
%    \node [contact={info={\small $P^{d}_{2} = 0.5$}}] (bus2) at (2,1) {};
%    \node [contact={info'={\small $P^{d}_{3} = 0.5$}}] (bus3) at (4,0) {};
%    % Draw transmission lines
%    \draw (bus1) to [resistor={info={\small $r = 0.05, x = 0.1$}}] (bus2);
%    \draw (bus2) to [resistor={info={\small $r = 0.01, x = 0.1$}}] (bus3);
%    \draw (bus1) to [resistor={info'={\small $r = 0.08$, $ x = 0.8$}}] (bus3);
% \end{tikzpicture}
% \caption{Circuit representation of case3 from MATPOWER.}
% \end{figure}

\end{remark}

Given that we obtain active power loss as a linear combination of the power flow equations it is natural to wonder if the SOC relaxations to ACOPF (or outer approximations thereof) accurately approximate active-power losses. In Table~\ref{table:losses} we provide empirical evidence that the quality of the approximations to total losses provided by the SOC relaxations is still very weak -- even though the objective lower bounds proved by the SOC relaxations are quite close to ACOPF primal bounds. 
%\color{red} This could be used to explain why heuristics such as minimizing the distance from a solution to a relaxation to the AC feasible region struggles...\color{black}.%
\begin{table}[htbp]
    \setlength{\tabcolsep}{4pt}
    \centering
    \footnotesize
    \caption{Active-power losses and optimality gaps.}
    \begin{tabular}{ @{} l r r r r r r r r r@{} }
    \toprule
    & \multicolumn{2}{c}{AC} & \multicolumn{2}{c}{Jabr SOC} & & & \\ 
    \cmidrule(l{0.5em}r{0.40em}){2-3} \cmidrule(l{0.5em}r{0.40em}){4-5}
    \multicolumn{1}{l}{Case} & Objective & Losses & Objective & Losses & Opt Gap & Loss Gap \\
    \midrule
        1354pegase & 74069.35 & 10.09 & 74008.58 & 9.48 & 0.08\% & 6.04\% & \\ %i2 SOC 74017.66/9.57
        ACTIVSg10k & 2485898.75 & 23.01 & 2466666.10 & 13.88 & 0.77\% & 39.68\% & \\
        % 30000goc-api & 1777930.63 & - & 1529197.81 & 106.46 & cutplane/i2 soc& - &  \\
        ACTIVSg70k & 16439499.83 & 156.83 & 16217263.66 & 121.31 & 1.35\% & 22.65\% & \\
        % 78484epigrids-sad
    \bottomrule
    \end{tabular}

    \label{table:losses}
\end{table}
\color{black}

It must be noted,  however, that from a theoretical perspective the existence of active-power losses is not what makes ACOPF intractable; the NP-hardness proof in~\cite{verma09,bienstock+verma19} relies on a lossless system.

\subsection{Losses and cutting-planes}

Next we provide theoretical justification for using an outer-approximation cutting-plane framework on the Jabr and $i2$ relaxations. We show that for transmission lines with $G_{kk} > 0 > G_{km} = G_{mk} \geq - G_{kk}$ and $B_{km} = B_{mk}$, in particular lines with no transformer or shunt elements, active-power loss inequalities are implied by the Jabr inequalities, and also by the definition of the $i^{(2)}$ variable. We begin with two simple technical observations.

First, consider a (generic) rotated cone inequality 
\begin{align}\label{eq:genericrotated}
& x^{2} + y^{2} \leq wz
\end{align}
where $x,y \in \R$ and $w,z \in \R_{+}$. Then
\begin{subequations}
    \begin{align}
        x^{2} + y^{2} \leq wz &\iff (2x)^{2} + (2y)^{2} \leq (w+z)^{2} - (w-z)^{2} \\
        &\iff ||(2x,2y,w-z)^{\top}||_{2} \leq w+z. \label{eq:rotatedrewrite}
    \end{align}
\end{subequations}
Next, let $\lambda \in \R^3$ satisfy $|| \lambda||_2 = 1$.  Then, by \eqref{eq:rotatedrewrite},
\begin{align}
& 2 \lambda_1 x + 2 \lambda_2 y + \lambda_3(w - z) \le || \lambda||_2 \,
||(2x,2y,w-z)^{\top}||_{2} \leq w+z. \label{eq:genericouter}
\end{align}
Inequality \eqref{eq:genericouter} provides a generic recipe to obtain outer-envelope inequalities for the rotated cone \eqref{eq:genericrotated}. As a result of these developments, we have:
\begin{proposition}\label{prop:jabrouter}
    For a transmission line $\{k,m\} \in \mathcal{E}$ with $G_{kk} > 0 > G_{km} = G_{mk} \geq - G_{kk}$ and $B_{km} = B_{mk}$, the Jabr inequality $c_{km}^{2} + s_{km}^{2} \leq v_{k}^{(2)} v_{m}^{(2)}$ implies, as an outer envelope approximation inequality, that $\ell_{km} \geq 0$.
\end{proposition}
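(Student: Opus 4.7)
The plan is to compute $\ell_{km} = P_{km} + P_{mk}$ directly from the Jabr power-flow expressions \eqref{SOCP:def_to_activepower}--\eqref{SOCP:def_from_activepower} and then recognize the resulting linear functional as one of the outer-envelope cuts from \eqref{eq:genericouter} applied to the Jabr rotated cone \eqref{SOCP:jabr}.

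First I would specialize the admittance matrix for a \emph{simple} (non-transformer, shunt-free) line: as noted in the paragraph preceding \eqref{loss:2}, one has $G_{kk}=G_{mm}=g_{km}$, and standard $\pi$-model bookkeeping yields $G_{km}=G_{mk}=-g_{km}$ together with $B_{km}=B_{mk}$. Substituting into \eqref{SOCP:def_to_activepower}--\eqref{SOCP:def_from_activepower} and adding, the $s_{km}$ terms cancel (because $B_{km}-B_{mk}=0$), leaving
\begin{equation*}
\ell_{km} \;=\; P_{km}+P_{mk} \;=\; g_{km}\bigl(v_k^{(2)} + v_m^{(2)} - 2c_{km}\bigr).
\end{equation*}
Since $g_{km}\ge 0$ by hypothesis, the task reduces to showing that the linear inequality $2c_{km}\le v_k^{(2)} + v_m^{(2)}$ is an outer-envelope consequence of the Jabr cone.

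Next I would invoke \eqref{eq:genericouter} with the identifications $x\leftarrow c_{km}$, $y\leftarrow s_{km}$, $w\leftarrow v_k^{(2)}$, $z\leftarrow v_m^{(2)}$, and the unit vector $\lambda = (1,0,0)^{\top}$. Then \eqref{eq:genericouter} reads exactly
\begin{equation*}
2c_{km} \;\le\; v_k^{(2)} + v_m^{(2)},
\end{equation*}
which is a valid linear outer-approximation cut for \eqref{SOCP:jabr}. Multiplying by $g_{km}\ge 0$ and combining with the expression above yields $\ell_{km}\ge 0$, completing the argument.

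There is essentially no obstacle: the derivation is a one-line algebraic cancellation followed by a single application of the outer-envelope recipe just developed. The only thing to be careful about is the precise form of the simple-line admittance matrix (in particular that the off-diagonal conductances are $-g_{km}$ and that $B_{km}=B_{mk}$), since without these identities the $c_{km}$ and $s_{km}$ coefficients would not collapse as cleanly. The message to highlight is conceptual rather than technical: the loss inequality $\ell_{km}\ge 0$ is literally the $\lambda=(1,0,0)$ supporting hyperplane of the Jabr cone, which is what makes the linear-loss relaxation a principled outer approximation of the SOCP.
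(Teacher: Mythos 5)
Your proposal is correct and follows essentially the same route as the paper's proof: sum the linearized flow equations to get $\ell_{km} = g_{km}\bigl(v_k^{(2)}+v_m^{(2)}-2c_{km}\bigr)$, then apply the generic outer-envelope inequality \eqref{eq:genericouter} with $\lambda=(1,0,0)^{\top}$ to the rewritten Jabr cone. The only difference is that you spell out the simple-line admittance identities that make the $s_{km}$ terms cancel, which the paper leaves implicit.
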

\begin{proof}
See Appendix~\ref{appendix:jabr_implies_lossineq}.
\end{proof}

Moreover, for simple transmission lines the definition of the variable $i^{(2)}$ implies the active-power loss inequalities~\cite{subhonmesh+etal12,coffrin+etal15b}. Actually, if a line $\{k,m\}$ has no shunts, then it is folklore (and follows from basic physics) that $i^{(2)}_{km} \geq 0$ implies $\ell_{km} \geq 0$.

% \begin{proposition}
% For any line simple transmission line $\{k,m\} \in \mathcal{E}$ with $g_{km} \geq 0$, the definition of the variable $i^{(2)}_{km}$ implies the active-power loss inequality associated with that line.
% \end{proposition}
% \begin{proof}
%     Since $I_{km} = y_{km} ( V_{k} - V_{m} )$ for a simple line, we have
% \begin{align*}
%     |I_{km}|^{2} &= |y_{km}|^{2}|V_{k} - V_{m}|^{2} \\
%     &= (g_{km}^{2}+b_{km}^{2}) |V_{k} - V_{m}|^{2}.
% \end{align*}
% Hence defining
% \begin{equation*}
%     i_{km}^{(2)} := (g_{km}^{2}+b_{km}^{2}) ( v_{k}^{(2)} + v_{m}^{(2)} - 2c_{km})
% \end{equation*} 
% and imposing non-negativity on $i_{km}^{(2)}$ implies $\ell_{km} \geq 0$ assuming $g_{km} \geq 0$. Moreover, if $r_{km} > 0$, then $|y|^{2} = g_{km}/r_{km}$ and we can write 
% \begin{align*}
%     i^{(2)}_{km} = \frac{\ell_{km}}{r_{km}}.
% \end{align*}
% \end{proof}

\begin{proposition}\label{proposition:i2_implies_lossineq}
For any line transmission line $\{k,m\} \in \mathcal{E}$ with $y_{km}^{sh} = 0$, the definition of the variable $i^{(2)}_{km}$ implies the active-power loss inequality associated with that line.
\end{proposition}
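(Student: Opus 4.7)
The plan is to mirror the proof of \cref{prop:jabrouter}, using the $i2$ rotated cone \eqref{i2ineq} in place of the Jabr cone, together with the explicit linear formula for $i_{km}^{(2)}$ valid on a simple line. Conceptually, the statement will reduce to the fact that $\ell_{km}$ and $i_{km}^{(2)}$ are, for a simple line, proportional with a nonnegative constant.

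First I would spell out the linear definition of $i_{km}^{(2)}$ for a simple line. Since $I_{km}=y_{km}(V_k-V_m)$ with $y_{km}=g_{km}+jb_{km}$, a direct calculation using $|V_k-V_m|^2 = v_k^{(2)}+v_m^{(2)} - 2c_{km}$ gives
\begin{equation*}
i_{km}^{(2)} \;=\; (g_{km}^{2}+b_{km}^{2})\bigl(v_k^{(2)}+v_m^{(2)}-2c_{km}\bigr),
\end{equation*}
which is precisely the specialization of \eqref{appendix:i2def} to the simple-line case (the $s_{km}$-terms cancel because $|V_k-V_m|^2$ depends only on the real part of $V_k V_m^{\ast}$). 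Combining this with the identity $\ell_{km} = g_{km}(v_k^{(2)}+v_m^{(2)}-2c_{km})$ already derived inside the proof of \cref{prop:jabrouter} yields the key proportionality
\begin{equation*}
\ell_{km} \;=\; \frac{g_{km}}{g_{km}^{2}+b_{km}^{2}}\, i_{km}^{(2)}.
\end{equation*}

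It then suffices to show $i_{km}^{(2)}\ge 0$. Applying the generic outer-envelope inequality \eqref{eq:genericouter} to the $i2$ rotated cone \eqref{i2ineq} with the choice $\lambda=(0,0,1)^{\top}$ gives $v_k^{(2)}-i_{km}^{(2)}\le v_k^{(2)}+i_{km}^{(2)}$, i.e.\ $i_{km}^{(2)}\ge 0$. (One could equivalently read this off directly from \eqref{i2ineq} together with $v_k^{(2)}\ge (V_k^{\min})^2>0$.) Since $g_{km}\ge 0$ and $g_{km}^{2}+b_{km}^{2}>0$ for any physical line, the proportionality above immediately gives $\ell_{km}\ge 0$, as desired.

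The only mildly delicate step is the bookkeeping in the first paragraph: one must verify that with the simple-line sign conventions ($G_{kk}=G_{mm}=g_{km}$, $G_{km}=G_{mk}=-g_{km}$, $B_{kk}=B_{mm}=b_{km}$, $B_{km}=B_{mk}=-b_{km}$), the generic linear expression \eqref{appendix:i2def} collapses onto the pure $v_k^{(2)}+v_m^{(2)}-2c_{km}$ form with vanishing $s_{km}$-coefficient. Once that cancellation is checked, the remainder of the argument is a one-line application of the outer-envelope template already used in \cref{prop:jabrouter}.
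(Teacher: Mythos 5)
Your proof is correct and follows essentially the same route as the paper: derive the simple-line identity $i_{km}^{(2)}=(g_{km}^{2}+b_{km}^{2})\bigl(v_k^{(2)}+v_m^{(2)}-2c_{km}\bigr)$, relate it to $\ell_{km}=g_{km}\bigl(v_k^{(2)}+v_m^{(2)}-2c_{km}\bigr)$, and conclude $\ell_{km}\ge 0$ from $i_{km}^{(2)}\ge 0$ and $g_{km}\ge 0$. The only difference is cosmetic: the paper simply imposes non-negativity of $i_{km}^{(2)}$ (it is a squared current magnitude), whereas you additionally justify it via the $i2$ cone with $\lambda=(0,0,1)^{\top}$, which is a harmless elaboration of the same argument.
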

\begin{proof}
See Appendix~\ref{appendix:i2_implies_lossineq}.
\end{proof}

\subsection{A numerically better-behaved i2 relaxation}\label{subsection:newrelaxation}

%linearly constrained i2 is stronger than linearly constrained Jabr. 
%is linearly constrained i2 stronger than the linear-loss-relaxation? If all lines are simple it is the case. General case: Pkm + Pmk >= 0 implied by i2km >= 0?

The i2 relaxation is at least as strong as the Jabr relaxation, and in congested networks it can be strictly stronger (c.f. Proposition~\ref{proposition:i2_stronger_than_jabr}). Thus, the i2 relaxation is a natural candidate for outer-approximation via a cutting-plane framework (c.f.~\ref{section:cutplane});  we numerically verify the strength of a linear outer-approximation of the i2 relaxation in~\ref{subsection:cutcomputations}. The strength of this relaxation comes at a price: numerical instability. To be precise, the potentially large coefficients in the definition of the $i^{(2)}$ variables can result in a ill-conditioned SOC instance (see, e.g.,~\cite{klotz14}). Next we illustrate this issue with a simple case to show how the Jabr outer-envelope inequalities complement the strength of the $i^{(2)}$ definitions. We conclude this subsection with a new, numerically stable modified i2 relaxation, which we term i2+($\rho$), where $\rho > 0$. %or i2+? i2OK? haha

Let $\{k,m\}$ be a transmission line with a limit $U_{km} < +\infty$, and without any transformers or shunt elements. By~\eqref{appendix:i2def_eq} we have
        \begin{equation}\label{i2simpleline}
            i_{km}^{(2)} = \left( \frac{1}{r_{km}^{2} + x_{km}^{2}} \right) \left( v_{k}^{(2)} + v_{m}^{(2)} - 2  c_{km}  \right)
        \end{equation}
where $r_{km}$ and $x_{km}$ denote the line's  resistance and reactance. Moreover, equation~\eqref{eq:power2def} implies that $i_{km}^{(2)}$ can be upper-bounded by the constant $H_{km}: = U_{km} / (V_{k}^{\min})^{2}$, where $V_{k}^{\min}$ corresponds to the lower limit for voltage magnitude at bus $k$ (both voltage limits are close to $1$ under standard data conditions).

Suppose that this simple line $\{k,m\}$ has small resistance, e.g., on the order of $10^{-5}$. Then $1/(r_{km}^{2} + x_{km}^{2})$ can be on the order of $10^{8}$. In addition, assume that the limit $U_{km}$ is \emph{small}. Thus,  $(r_{km}^{2} + x_{km}^{2}) H_{km}$ can be fairly small, which yields
\begin{equation}\label{surfacejabrcone}
    v_{k}^{(2)} + v_{m}^{(2)} - 2 c_{km} \leq (r_{km}^{2} + x_{km}^{2}) H_{km} \approx 0.
\end{equation}
Since $v_{k}^{(2)} +  v_{m}^{(2)}  - 2 c_{km} \geq 0$ is a Jabr outer-envelope cut (c.f. proof Proposition~\ref{prop:jabrouter}), inequality~\eqref{surfacejabrcone} is forcing our solutions to lie near the surface of the rotated-cone $c_{km}^{2} + s_{km}^{2} \leq v_{k}^{(2)} v_{m}^{(2)}$, i.e., it is cutting-off solutions which are in the interior of the Jabr cone for line $\{k,m\}$. Notice that the more capacitated the transmission line is, the stronger the effect of the $i^{(2)}$ variables (c.f. Figure~\ref{fig:badi2s}).

This key observation motivates a numerically more stable relaxation. Consider the definition of $i^{(2)}_{km}$ in~\ref{appendix:i2def} for appropriate constants $\alpha_{km}, \beta_{km}, \gamma_{km}$ and $\zeta_{km}$:
\begin{equation}\label{goodi2s}
    i_{km}^{(2)} = \alpha_{km} v_{k}^{(2)} + \beta_{km} v_{m}^{(2)} + \gamma_{km} c_{km} + \zeta_{km} s_{km}.
\end{equation}
Let $\rho > 0$ be some fixed parameter, and consider the following heuristic: for every transmission line $\{k,m\} \in \mathcal{E}$,
\begin{enumerate}
    \item[(i)] if $\alpha_{km} > \rho$, we say that the branch induces a \emph{bad-i2}, and we define the following inequalities (which are valid by the previous discussion)
    \begin{subequations}\label{badi2s}
    \begin{align}
        v_{k}^{(2)} + \frac{\beta_{km}}{\alpha_{km}} v_{m}^{(2)} + \frac{\gamma_{km}}{\alpha_{km} } c_{km} + \frac{\zeta_{km}}{\alpha_{km} } s_{km} &\geq 0, \label{badi2s_1}\\
        v_{k}^{(2)} + \frac{\beta_{km}}{\alpha_{km}} v_{m}^{(2)} + \frac{\gamma_{km}}{\alpha_{km} } c_{km} + \frac{\zeta_{km}}{\alpha_{km} } s_{km} &\leq \frac{H_{km}}{\alpha_{km}}; \label{badi2s_2}
    \end{align}
    \end{subequations}
    \item[(ii)] otherwise, we say that the branch induces a \emph{good-i2}, and we define $i_{km}^{(2)} = \alpha_{km} v_{k}^{(2)} + \beta_{km} v_{m}^{(2)} + \gamma_{km} c_{km} + \zeta_{km} s_{km}$.
\end{enumerate}

\begin{figure}[H]
\centering
\includegraphics[scale=1]{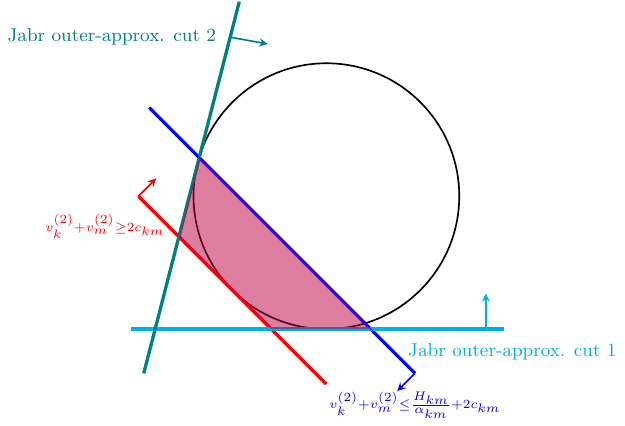} 
\caption{The black circle represents a horizontal cross-section of a projected Jabr cone~\eqref{SOCP:jabr} onto the $(v_{k}^{(2)},v_{m}^{(2)},c_{km})$-space. The red and blue inequalities represent~\eqref{badi2s} for a simple transmission line, while the teal and cyan inequalities represent two Jabr outer-approximation cuts.}\label{fig:badi2s}
\end{figure}

Empirically, we see in Table~\ref{table:coeffs_badi2s} that the coefficients of the inequalities~\eqref{badi2s} are relatively small; clearly closer to the range of coefficients of the linear constraints of the Jabr SOCP, and to the coefficients\footnote{The size of coefficients of these cuts are linear in the values of variables $c,s$ and $v$ in any solution to a linear relaxation of the Jabr SOCP; see Proposition~\eqref{proposition:projection}.} of the Jabr outer-envelope cuts which will be deployed in our cutting-plane algorithm (c.f. the linearly constrained relaxation $M$ in~\ref{subsubsection:basicalgorithm_multi}).

% \begin{table}[htbp]
%     \setlength{\tabcolsep}{3pt}
%     \centering
%     \footnotesize
%     \begin{tabular}{ @{} l r | r r r r r r@{} }
%     \toprule
%     \multicolumn{1}{l}{Case} & \multicolumn{1}{c}{$\alpha$} & \multicolumn{1}{c}{$\frac{\beta}{\alpha}$} & \multicolumn{1}{c}{$\frac{\gamma}{\alpha}$} & \multicolumn{1}{c}{$\frac{\zeta}{\alpha}$} & \multicolumn{1}{c}{$\frac{H}{\alpha}$} & \\
%     \midrule
%         ACTIVSg10k & 1e10/1092402.44 & 1.47/1.01 & -1.80/-2.01 & 0.88/0.00 & 18.52/2.57e-09/3.06 \\
%         10000goc-sad & 2233637.70/241342.40 & 1.21/1.00 & -1.9/-2.00 & 0.01/0.00 & 1.89/4.64e-06/0.48  \\
%         30000goc-api & 24134013.85/366208.30 & 1.31/1.00 & -1.94/-2.00 & 0.01/5.90e-06 & 1.89/2.16e-06/0.71  \\
%     \bottomrule
%     \end{tabular}
%     \caption{Statistics of coefficients in inequalities~\eqref{badi2s}.}
%     \label{table:coeffs_badi2s}
% \end{table}

\begin{table}[htbp]
    \setlength{\tabcolsep}{3pt}
    \centering
    \footnotesize
    \caption{Statistics of the coefficient $\alpha$ in~\eqref{goodi2s} and of the coefficients in~\eqref{badi2s}.}
    \resizebox{\textwidth}{!}{
    \begin{tabular}{ @{} l r r | r r r r r r r r r r@{} }
    \toprule
    & \multicolumn{2}{c}{$\alpha$} & \multicolumn{2}{c}{$\frac{\beta}{\alpha}$} & \multicolumn{2}{c}{$\frac{\gamma}{\alpha}$} & \multicolumn{2}{c}{$\frac{\zeta}{\alpha}$} & \multicolumn{3}{c}{$\frac{H}{\alpha}$} & \\
    \cmidrule(l{0.5em}r{0.40em}){2-3} \cmidrule(l{0.5em}r{0.40em}){4-5} \cmidrule(l{0.5em}r{0.40em}){6-7} \cmidrule(l{0.5em}r{0.40em}){8-9} \cmidrule(l{0.5em}r{0.40em}){10-12}
    \multicolumn{1}{l}{Case} & Max & Avg & Max & Avg & Max & Avg & Max & Avg & Max & Min & Avg & \\
    \midrule
        ACTIVSg10k & 1e10 & 1092402.44 & 1.47 & 1.01 & -1.80 & -2.01 & 0.88 & 0.00 & 18.52 & 2.57e-09 & 3.06 \\
        10000goc-sad & 2233637.70 & 241342.40 & 1.21 & 1.00 & -1.90 & -2.00 & 0.01 & 0.00 & 1.89 & 4.64e-06 & 0.48  \\
        30000goc-api & 24134013.85 & 366208.30 & 1.31 & 1.00 & -1.94 & -2.00 & 0.01 & 5.90e-06 & 1.89 & 2.16e-06 & 0.71  \\
    \bottomrule
    \end{tabular}
    }
    \label{table:coeffs_badi2s}
\end{table}

\noindent By making reasonable assumptions about transmission line parameters we can derive the following upper bounds for the coefficients in inequalities~\eqref{badi2s}.

\begin{proposition}\label{proposition:boundedcoeffs}
    Let $\{k,m\} \in \mathcal{E}$ be a transmission line that satisfies $| b | > b^{sh}$, $g^{sh} = 0$ and $|b| > g$. Then, the coefficients $\frac{\beta_{km}}{\alpha_{km} }$, $\frac{\gamma_{km}}{\alpha_{km} }$, and $\frac{\zeta_{km}}{\alpha_{km} }$ in~\eqref{badi2s} are bounded by $\max \{ \tau_{km}^{2}, 3\tau_{km}\}$, where $\tau_{km}$ is the transformer tap ratio located at bus $k$.\footnote{If the line does not have a transformer, then $\tau_{km} = 1$; generally, tap ratios do not take extreme values.}
\end{proposition}
\begin{proof}
See Appendix~\ref{appendix:boundedcoeffs}.
\end{proof}

In consequence, given $\rho > 0$, we define our numerically better-behaved i2+($\rho$) relaxation by \eqref{AC:theobjective}-\eqref{AC:reactivepowerbal},~\eqref{AC:genactivepowerlimit}-\eqref{AC:capacity}, the definition of $i_{km}^{(2)}$~\eqref{appendix:i2def_eq} and the rotated-cone inequalities~\eqref{i2ineq} for every good-i2 branch $\{k,m\}$, and inequalities~\eqref{badi2s} for every bad-i2 branch $\{k,m\}$. We use the i2+($\rho$) relaxation for our multi-period experiments (see discussion and results in~\ref{subsection:experiments_multi}).

\section{Cutting-plane Framework: Cut Separation and Management}\label{section:cutplane}

In this paper we use a dynamically generated linearly-constrained relaxation as a lower bounding procedure for ACOPF; relying on a perspective sometimes associated with integer programming.

Given a set $X$ in $\mathbb{R}^{n}$, we say that a convex inequality $g(x) \leq d$ is \emph{valid} for $X$ if $g(x) \leq d$ holds for all $x \in X$. Given $X$ and $\overline{x} \notin X$, then we say that $c^{\top} x \leq d$ is a (linear) \emph{cut} for $X$ if the inequality is valid for $X$ but it is not satisfied by $\overline{x}$. A (linear) cutting-plane algorithm~\cite{schrijver98} for a set $X$ is an iterative procedure in which, starting from an initial (linear) relaxation, in every round (linear) cuts are added to approximate $X$. Typically, these cuts are computed in iterative fashion; at each round an optimal solution $\overline{x} \notin X$ to the current relaxation is separated using a computed cut.

% For a set $R$ in $\mathbb{R}^{n}$ (e.g. $R \supseteq X$ is a relaxation of $X$) we say that $c^{\top} x \leq d$ is a (linear) \emph{cut} for $X$ (relative to $R$) if the inequality is valid for $X$ but not for $R$

In the case of single-period experiments our target set $X$ is the i2 relaxation of ACOPF, while we use the i2+ relaxation for the multi-period setting. The reason is simple: the multi-period i2 relaxation for medium to large networks is not numerically tractable. %\color{red} Actually, that's the i2+ relaxation... add small table with i2 socps or even base model $M_{0}$ for i2 relaxation? \color{black}).

In support of this statement, we note that direct solution of the Jabr and i2 relaxations of ACOPF, for large instances, is computationally prohibitive and often results in non-convergence (c.f. tables~\ref{table:jabrsocps},~\ref{table:i2socps},~\ref{table:ws_loads_1_1},~\ref{table:ws_onelineoff},~\ref{table:ws_loads_T4_i2socp+},~\ref{table:ws_loads_T12_i2socp+} and~\ref{table:ws_loads_T24_i2socp+}). Our empirical evidence further shows that outer-approximation of the rotated-cone inequalities (in either case) requires a large number of cuts in order to achieve a tight relaxation value. Moreover, employing such large families of cuts yields a relaxation that, while linearly constrained, still proves challenging -- both from the perspective of running time and numerical tractability. Nonetheless, a characteristic feature of our iterative procedure is its robustness to potentially suboptimal termination of the oracle used to solve the LPs or convex QPs; \emph{independent} of the quality of the primal solution obtained, our linear cuts will always be valid.

However, as we show, adequate cut management proves successful, yielding a procedure that is (a) rapid, (b) numerically stable, and (c) constitutes a very tight relaxation (c.f. tables~\ref{table:ws_loads_1_1} and~\ref{table:ws_onelineoff}).  The critical ingredients in this procedure are: (1) fast cut separation; (2) appropriate violated cut selection; and (3) effective dynamic cut management, including rejection of \textit{nearly-parallel} cuts and removal of \textit{expired} cuts, i.e., previously added cuts that are slack (cf.~\ref{subsection:basicalgorithm}).

Our procedure possesses efficient warm-starting capabilities -- this is a central goal of our work. Previously computed cuts, for some given instance, can be re utilized and loaded into new runs of \emph{related} instances, hence leveraging previous computational effort. It is worth noting that this reoptimization feature stands in sharp contrast to what is possible using nonlinear (convex) solvers. In~\ref{subsection:warmstarts} we justify the validity of this feature and Tables~\ref{table:ws_loads_1_1} and~\ref{table:ws_onelineoff} summarize extensive numerical evidence on its performance relative to solving the SOCPs `from scratch'. We remark that adequate cut management is what enables this capability in the case of large single and multi-period ACOPF instances.

\subsection{Two Simple Cut Procedures}\label{subsection:cuts}
The following (routine) results provide a fast procedure for separating over the rotated-cone inequalities
\begin{equation}\label{rotatedconeinequalities}
    c_{km}^{2} + s_{km}^{2} \leq v_{k}^{(2)} v_{m}^{(2)}, \hspace{2em} P_{km}^{2} + Q_{km}^{2} \leq v_{k}^{(2)} i_{km}^{(2)}.
\end{equation}

%
% \textcolor{red}{\bf above I have bypassed these results so as to proceed quickly -- Also, there may be simpler ways to prove this.  For example, for Prop. 5 you can just produce the outer-envelope inequality, then produce x0 and s0 and show that the line from (x',s') to (x0, s0) is orthogonal to the outer-envelope hyperplane. Also the proofs in any case should be moved to an Appendix.}

\begin{proposition}\label{proposition:projection}
    Consider the second-order cone $C := \{(x,s) \in \R^{n} \times \R_{+} \, : \, ||x||_{2} \leq s\}$. Suppose $(x',s') \notin C$ with $s' \geq 0$. Then the projection of $(x',s')$ onto $C$ is given by
    \begin{equation*}
        x_{0} := s_{0} \frac{x'}{||x'||}, \hspace{1em} s_{0} := \frac{||x'|| + s'}{2}.
    \end{equation*}
    Moreover, the hyperplane which achieves the maximum distance from $(x',s')$ to any hyperplane separating $C$ and $(x',s')$ is given by
    \begin{equation*}
        (x')^{t} x  \leq s || x' ||.
    \end{equation*}
\end{proposition}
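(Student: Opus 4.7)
The plan is to treat the two claims separately: first identify the projection $(x_0,s_0)$ by direct optimization, then derive the separating hyperplane as the tangent to $C$ at that projection and check it is the one of maximal distance from $(x',s')$. For the projection, I would start from the variational problem
\begin{equation*}
\min_{(x,s) \in C} \tfrac{1}{2}\bigl( \|x - x'\|^2 + (s - s')^2 \bigr).
\end{equation*}
Because $(x',s') \notin C$, the unique optimal $(x_0,s_0)$ lies on the boundary $\{\|x\| = s\}$; the hypothesis $\|x'\| > s' > 0$ also forces $\|x'\| > 0$, so the expressions below are well defined. For any fixed feasible $s > 0$, Cauchy--Schwarz shows the best choice of $x$ with $\|x\| = s$ is $x = s\, x'/\|x'\|$, since this aligns $x$ with $x'$. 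Substituting reduces the problem to the one-dimensional minimization of $(s - \|x'\|)^2 + (s - s')^2$, whose stationary point is $s_0 = (\|x'\| + s')/2 > 0$, yielding the stated formulas. (Uniqueness and existence of the projection onto the closed convex set $C$ are standard.)

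For the separating hyperplane, I would compute the tangent hyperplane to $C$ at the smooth boundary point $(x_0,s_0)$. The gradient of $f(x,s) = \|x\|^2 - s^2$ at $(x_0,s_0)$ is $(2x_0,-2s_0)$, so the tangent hyperplane is $\{(x,s) : x_0^\top x = s_0 s\}$; since $s_0 > 0$, substituting $x_0 = s_0\, x'/\|x'\|$ and cancelling $s_0$ gives $(x')^\top x = s\,\|x'\|$. I would then verify: (i) separation holds, since for any $(y,t) \in C$, Cauchy--Schwarz yields $(x')^\top y \le \|x'\|\,\|y\| \le \|x'\|\,t$; and (ii) $(x',s')$ violates it, since $(x')^\top x' = \|x'\|^2 > s'\|x'\|$ as $\|x'\| > s'$.

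Finally, for the maximum-distance claim, the conceptual step is standard: any hyperplane $H$ separating $C$ from $(x',s')$ leaves $(x_0,s_0) \in C$ on the far side from $(x',s')$, hence $\mathrm{dist}((x',s'),H) \le \|(x',s') - (x_0,s_0)\|$. The tangent hyperplane above meets this bound with equality, because it passes through $(x_0,s_0)$ and its normal direction is parallel to $(x',s') - (x_0,s_0)$. Concretely, I would compute
\begin{equation*}
(x',s') - (x_0,s_0) \;=\; \tfrac{\|x'\|-s'}{2}\bigl(x'/\|x'\|,\;-1\bigr),
\end{equation*}
and observe that the normal $(x_0,-s_0) = s_0(x'/\|x'\|,-1)$ to the tangent is a positive multiple of the same vector. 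The only mildly subtle point, and the main thing to get right, is this alignment check: it is what promotes the inequality $\mathrm{dist}(\cdot,H) \le \|(x',s')-(x_0,s_0)\|$ to an equality for the proposed hyperplane, thereby attaining the maximum. Everything else is routine (convex-projection characterization, Cauchy--Schwarz, and a one-variable stationary-point calculation).
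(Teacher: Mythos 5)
Your proposal is correct and follows essentially the same route as the paper: compute the projection $(x_0,s_0)$ by solving the distance-minimization problem, then take the supporting hyperplane of $C$ at the projection, whose normal is parallel to $(x',s')-(x_0,s_0)$, and read off $(x')^{\top}x \le s\|x'\|$. The only differences are cosmetic: you obtain the projection via a Cauchy--Schwarz alignment plus a one-variable minimization instead of the paper's Lagrangian first-order conditions, and you spell out the maximum-distance verification (separation, violation, and the alignment of the normal with the residual) more explicitly than the paper's brief appeal to strong duality.
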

\begin{proof}
    See Appendix~\ref{appendix:proofprojection}.
\end{proof}

\begin{corollary}\label{cor:cuts}
    Let $C := \{ (x,y,w,z) \in \R^{2} \times \R^{2}_{+} \, : \, x^{2} + y^{2} \leq wz \} \subseteq \R^{4}$ and suppose that $(x',y',w',z') \notin C$ where $w'+z'>0$. Then the hyperplane separating $C$ from $(x',y',w',z')$ and at maximum distance from this point is given by
    \begin{equation}
        (4x')x + (4y')y + ((w'-z')-n_{0}) w + (-(w'-z')-n_{0})z \leq 0,
    \end{equation}
    where $n_{0} := ||(2x',2y',w'-z')^{\top}||_{2}$.
\end{corollary}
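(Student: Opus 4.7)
The plan is to reduce the rotated cone $C$ to a standard Lorentz cone via the reparametrization already used in \eqref{eq:rotatedrewrite}, apply Proposition~\ref{proposition:projection} in the transformed coordinates, and then pull the resulting separating hyperplane back to the original variables.

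Concretely, I would introduce the linear map $L(x,y,w,z) := \bigl((2x,\, 2y,\, w-z),\, w+z\bigr) \in \R^{3} \times \R_{+}$ and observe, using \eqref{eq:rotatedrewrite}, that $(x,y,w,z) \in C$ if and only if $L(x,y,w,z)$ lies in the standard second-order cone $\tilde{C} := \{(X,S) \in \R^{3} \times \R_{+} : \|X\|_{2} \le S\}$ (the requirement $w+z \ge 0$ is automatic on $C$). Setting $(X',S') := L(x',y',w',z')$, the fact that $(x',y',w',z') \notin C$ gives $\|X'\|_{2} = n_{0} > S' = w' + z'$, while the hypothesis $w'+z' > 0$ ensures $S' > 0$. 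Proposition~\ref{proposition:projection} then applies and yields the separating hyperplane $(X')^{\top} X \le \|X'\|_{2}\, S = n_{0} S$ for $\tilde{C}$.

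Substituting $X=(2x,2y,w-z)$ and $S=w+z$ produces $4x'x + 4y'y + (w'-z')(w-z) \le n_{0}(w+z)$, and collecting the coefficients of $w$ and $z$ on the left-hand side gives exactly the inequality stated in the corollary. Validity on all of $C$ is inherited from $L(C) \subseteq \tilde{C}$, while the point $(x',y',w',z')$ is excluded by construction.

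The one subtle point I would flag---and, I expect, the only genuine obstacle---is the ``maximum distance'' claim: the map $L$ is not an isometry (it rescales and skews the metric on $\R^{4}$), so the pullback of the maximum-distance separator for $\tilde{C}$ need not literally maximize Euclidean distance to $(x',y',w',z')$ among all separators of $C$ in $\R^{4}$. For the cutting-plane procedure this is immaterial---only a strong violated cut is needed---but a fully precise statement would either take the maximum in the transformed coordinates, or compute the Euclidean projection of $(x',y',w',z')$ onto $C$ directly, which does not admit a comparably clean closed form.
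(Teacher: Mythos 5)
Your proof is correct and follows essentially the same route as the paper's: rewrite the rotated-cone inequality in the form \eqref{eq:rotatedrewrite} and apply Proposition~\ref{proposition:projection} in the transformed coordinates, then pull the resulting hyperplane back to the original variables. The caveat you raise about the change of variables not being an isometry (so ``maximum distance'' is attained in the transformed coordinates rather than literally in $\R^{4}$) is a fair observation that the paper's one-line proof is also silent on, but it does not affect the validity or usefulness of the cut.
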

\begin{proof}
     Rewriting the rotated-cone inequality as \eqref{eq:rotatedrewrite} and a direct application of Proposition~\ref{proposition:projection} gives us the desired separating hyperplane.
\end{proof}

The following proposition gives us a simple procedure for computing linear cuts for violated thermal-limit inequalities 
\begin{equation}\label{thermallimit}
    P_{km}^{2} + Q_{km}^{2} \leq U_{km}.
\end{equation}

\begin{proposition}\label{proposition:thermalcuts}
    Consider the Euclidean ball in $\R^{2}$ of radius $r$, $S_{r}:= \{ (x,y) \in \R^{2} \, : \, x^{2} + y^{2} \leq r^{2}\}$, and let $(x',y') \notin S_{r}$. Then the separating hyperplane that is at maximum distance from $(x',y')$ is given by
    \begin{equation}
        (x')x + (y')y \leq r ||(x',y')||_{2}.
    \end{equation}
\end{proposition}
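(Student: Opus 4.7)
The plan is to exploit a standard convex-analytic fact: for a closed convex set and an external point, the separating hyperplane achieving maximum distance from the point is the unique supporting hyperplane to the set at the Euclidean projection of the point. Since $S_{r}$ is a disk centered at the origin and the hypothesis $(x',y') \notin S_r$ forces $||(x',y')||_{2} > r$, the projection $p$ of $(x',y')$ onto $S_r$ is $p := (r/||(x',y')||_{2})(x',y')$, i.e., the intersection of the ray from the origin through $(x',y')$ with the boundary circle.

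The first step is to write the supporting hyperplane to $S_r$ at $p$. Because the outward normal to $\partial S_r$ at $p$ is parallel to $(x',y')$, this hyperplane has equation $(x',y') \cdot ((x,y) - p) = 0$, and evaluating $(x',y') \cdot p = r \, ||(x',y')||_{2}$ produces the candidate inequality $(x')x + (y')y \leq r \, ||(x',y')||_{2}$ claimed in the statement. This is the same idea as in Proposition~\ref{proposition:projection}, just specialized to the two-dimensional disk rather than a cone, so one could also invoke that result after a suitable change of variables; however a direct derivation is cleaner.

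The second step is to verify separation. For any $(x,y) \in S_r$, Cauchy--Schwarz gives $(x')x + (y')y \leq ||(x',y')||_{2} \, ||(x,y)||_{2} \leq r \, ||(x',y')||_{2}$, while plugging in $(x',y')$ itself yields $||(x',y')||_{2}^{2} > r \, ||(x',y')||_{2}$, so the inequality indeed separates $(x',y')$ from $S_r$. The third step is to confirm the maximum-distance property. A direct computation gives the distance from $(x',y')$ to this hyperplane as
\begin{equation*}
\frac{||(x',y')||_{2}^{2} - r \, ||(x',y')||_{2}}{||(x',y')||_{2}} \;=\; ||(x',y')||_{2} - r,
\end{equation*}
which equals the Euclidean distance from $(x',y')$ to $S_r$. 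For any competing separating hyperplane $H$, the projection $p$ must lie on the $S_r$-side of $H$ (by convexity of $S_r$), so the distance from $(x',y')$ to $H$ is bounded above by $||(x',y') - p||_{2} = ||(x',y')||_{2} - r$; hence our hyperplane attains the supremum.

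The proof is elementary throughout, and there is no real obstacle. The only step that requires a moment of attention is the optimality argument at the end, where one must observe that the projection point lies on the non-$(x',y')$ side of every separating hyperplane; this is immediate from convexity but is the one ingredient beyond routine algebra.
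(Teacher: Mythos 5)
Your proof is correct and follows essentially the same route as the paper: both identify the projection of $(x',y')$ onto $S_r$ as the radial point $(r/\|(x',y')\|_2)(x',y')$ on the boundary circle and take the supporting hyperplane there with normal along $(x',y')$, yielding $(x')x+(y')y \leq r\|(x',y')\|_2$. Your version merely spells out what the paper leaves as ``readily checked''---the Cauchy--Schwarz separation step and the argument that no separating hyperplane can be farther than $\|(x',y')\|_2 - r$ from the point---which is a welcome but not substantively different elaboration.
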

\begin{proof}
    See Appendix~\ref{appendix:proofthermalcuts}.
\end{proof}

\subsection{Basic Cutting-Plane Algorithm}\label{section:thealgorithm}

\subsubsection{Single-period}\label{subsection:basicalgorithm}

In what follows we describe our cutting-plane algorithm for the single-period setting. First, we define a linearly constrained base model $M_{0}$ as follows:
\begin{subequations}\label{basemodel}
\begin{align}
[M_{0}]: &\hspace{2em}\min \hspace{2em} \sum_{k \in \mathcal{G}} F_{k}(P_{k}^{g}) \\
    \text{subject to:}& \nonumber \\
    &\text{constraints } \eqref{AC:activepowerbal},\eqref{AC:reactivepowerbal},\eqref{JABR:flows}, \eqref{AC:genactivepowerlimit}-\eqref{AC:genreactivepowerlimit}, \eqref{AC:voltlimit}
\end{align}
\end{subequations}
In other words, we consider the linearized power flow equations of the Jabr SOCP and all the linear constraints in~\eqref{AC:firstformulation}. Recall that for single-period ACOPF, our target set $X$ is the i2 relaxation.

In every round of our iterative procedure, linear constraints will be added to and removed from the current relaxation, which is initialized as $M_{0}$. We will denote by $M$ the relaxation at a generic iteration of our cutting-plane algorithm.

Given a feasible solution $\bar{x}$ to $M$, and letting $f_{km}(x) \leq 0$ be some valid convex inequality~\eqref{rotatedconeinequalities} or~\eqref{thermallimit}, our measure of \emph{cut-quality} is ${\max \{ f_{km}(\overline{x}), 0 \}}$, i.e.,  the amount by which $\overline{x}$ violates the inequality. Let $\epsilon > 0$. For each type $\tau \in \{ \text{Jabr}, \text{i2}, \text{limit} \}$ of inequality we select the top $p_{\tau}$ percent --a fixed parameter-- most highly violated inequalities of type $\tau$ whose violation is greater than $\epsilon$ -- these are the $\tau$-candidates (Line 7 of the pseudocode). 

For each list of $\tau$-candidates, we compute cuts for the corresponding branches using the cut procedures described in~\ref{subsection:cuts}. Candidate cuts will be rejected if they are \emph{too parallel} to incumbent cuts in $M$ -- near-linear dependency is a common source of ill-conditioning in optimization models~\cite{higham96,klotz14}. To be precise, given $\epsilon_{par} > 0$, we say that two linear inequalities $c^{t}x \leq 0$ and $d^{t} x \leq 0$ are \emph{$\epsilon_{par}$-parallel} if the cosine of the angle between their normal vectors $c$ and $d$ is strictly more that $1 - \epsilon_{par}$. 

Finally, we describe a heuristic for \emph{cleaning-up} our formulation. For each added cut, we keep track of its \emph{cut-age}, i.e., the count of rounds since it was added. Then, in every iteration, if a cut $c^{\top}x \leq d$ has age greater or equal than a fixed parameter $T_{age} \in \mathbb{N}$, and it is \emph{$\epsilon$-slack}, i.e., $d - c^{\top} \overline{x} > \epsilon$, then it is dropped from $M$.
\begin{algorithm}
\caption{Cutting-Plane Algorithm}\label{thealgorithm}
\begin{algorithmic}[1]
\Procedure {Cutplane}{}
\State Initialize $r \gets 0$, $M \gets M_{0}$, $z_{0} \gets + \infty$
\While{$t < T$ and $r < T_{ftol}$}
\State $z \gets \min M$ and $\bar{x} \gets \text{argmin} \, M$
\State Check for violated inequalities by solution $\overline{x}$.
\State Sort inequalities by violation.
\State Compute cuts for the most violated inequalities.
\State Add computed cuts if they are not $\epsilon$-parallel to existing cuts in $M$.
\State Drop cuts of age $\geq T_{age}$ whose slack is $\geq$ $\epsilon_{j}$.
\If{$z - z_{0} < z_{0} \cdot \epsilon_{ftol}$}
\State $r \gets r+1$
\Else
\State $r \gets 0$
\EndIf
\State $z_{0} \gets z$
\EndWhile
\EndProcedure
\end{algorithmic}
\end{algorithm}

In addition to $M_{0}$ and the parameters $p_{\tau}, \epsilon, \epsilon_{par}, T_{age}$, other inputs for our procedure are: a time limit $T>0$; the number of admissible iterations without sufficient objective improvement $T_{ftol} \in \mathbb{N}$; and a threshold for objective relative improvement $\epsilon_{ftol} > 0$.

% The sparse nature of our cuts gives us a  A key fact about our cuts is that they are sparse, i.e., the support of the linear inequality involves a small and fixed number of variables related to the transmission line $\{k,m\} \in \mathcal{E}$ (c.f. Corollary~\ref{cor:cuts} and Proposition~\ref{proposition:thermalcuts}). This fact significantly helps the cut management procedure, since whenever we need to check whether a potential inequality is parallel or not to the incumbent inequalities of our formulation, it suffices to check only previously added cuts added for that particular branch, hence checking for parallel cuts is fast.

% Since the Jabr-cuts are ON for all the lines, then they will probably shape the "correct" side of the second-order cone (by "correct side" I mean vk^(2) + vm^(2) - 2 * ckm = 0).

\subsubsection{Multi-period}\label{subsubsection:basicalgorithm_multi}

We outer-approximate the i2+ relaxation in the multi-period setting. Thus, the linearly constrained base model $M_{0}^{T}$ is given by
\begin{subequations}\label{basemodel_multiperiod}
\begin{align}
[M_{0}^{T}]: &\hspace{2em}\min \hspace{2em} \sum_{t=0}^{T-1} \sum_{k \in \mathcal{G}} F_{k}(P_{k,t}^{g}) \\
    \text{subject to:}& \nonumber \\
    &\text{constraints } \eqref{AC:activepowerbal},\eqref{AC:reactivepowerbal},\eqref{JABR:flows}, \eqref{AC:genactivepowerlimit}-\eqref{AC:genreactivepowerlimit}, \eqref{AC:voltlimit} \\
    &\hspace{5em} \text{for every good-i2 branch $\{k,m\}$},~\eqref{appendix:i2def_eq},~\eqref{i2ineq} \\
    &\hspace{5em} \text{for every bad-i2 branch $\{k,m\}$},~\eqref{badi2s}.
\end{align}
\end{subequations}
In other words, we consider all the linear constraints of our i2+ relaxation.

For each time-period $t$, we sort violated inequalities $\tau \in \{\text{Jabr}, \text{i2}, \text{limit}\}$ by violation, as in the single-period setting, and pick as $(\tau,t)$-candidates branches the top $p_{\tau}$ percentage of the most violated branches per period $t$. The rest follows as in Algorithm~\ref{thealgorithm}.

Our experiments show that our cut selection is robust, and that Gurobi version 10.0.1~\cite{gurobi} is able to handle surprisingly well large volume of cuts, see tables~\ref{table:cp_scratch_T4}~\ref{table:ws_loads_T4_jabrsocp},~\ref{table:ws_loads_T4_i2socp+}.

\section{On the Accuracy of Lower Bounds}\label{section:accuracy}

% \color{red} Compute the Kappas for a few instances and illustrate the implications \color{black}

In this section we address an issue of fundamental importance in numerical optimization, and which, in our opinion, has not received adequate attention in the ACOPF literature.

%cite hiskens paper on solutions being on the boundary, that should be an issue even in the ieee community.

%Let $\mathcal{X} \subseteq \R^{n}$ be a generic %optimization problem 
%\begin{equation}
%    [P] \hspace{2em} \inf \, \{f(x) \, : \, x \in %\mathcal{X}\},
%\end{equation}
%where $f$ is real-valued and continuous function on %$\mathcal{X}$. Throughout this section we assume %that $[P]$ has a global minimizer.

Most optimization solvers, commercial and academic, work with finite precision, floating-point arithmetic (some notable exceptions are the LP solvers~\cite{applegate+etal07} and~\cite{gleixner+etal16}) and are subject to roundoff errors, usually small. To be precise, whenever an optimization instance is ``solved" and declared ``optimal" by a solver, it is highly likely that the provided solution will have small infeasibilities, i.e., the solution will violate the instance's constraints up to some additive or multiplicative quantity. Usually, this \emph{infeasibility tolerance} can be controlled by the user. 

A natural question then is: given an $\epsilon$-feasible $\tilde{x}$ solution to a convex optimization problem $[P]$ with optimal solution $\overline{x}$ (likely unknown by the user), how \emph{superoptimal} can the approximate solution be? In other words, are there any general guarantees which permit us bound the superoptimality of approximate solutions? For example, if $f$ denotes the objective function for $[P]$, we seek a guarantee of the form  
\begin{equation*}
    f(\tilde{x}) \geq f(\overline{x}) - h_{P}(\epsilon)
\end{equation*}
for a certain function $h_P$ which only depends on the instance's data. Ideally, we would like for $h_{P}$ to be monotonic on $\epsilon$ and polynomial on the size (bit-length) of the problem $[P]$ data. If $[P]$ is a linear program then $h_{P}$ is linear function whose slope coefficient has bit-length polynomial on the problem's bit encoding~\cite{schrijver98}. Moreover, if $[P]$ is a convex QP we also have a good guarantee as in the LP case. Our proof leverages an argument used to show that QP is in NP~\cite{vavasis90}.

% , e.g. see Lemma 5.1 in~\cite{beck+etal23}\textcolor{blue}{Better to cite Schrijver's book}

\begin{proposition}\label{proposition:superoptimality}
    Let $[P]$ be a convex QP with objective given by $f(x):= x^{\top} H x + c^{\top} x$, where $H \in \Q^{n \times n}$ is a positive definite matrix and $c \in \Q^{n}$, and feasible set $\mathcal{X} := \{ x \in \R^{n} \, : \, Ax \geq b\}$, where $A \in \Q^{m \times n}$ and $b \in \Q^{m}$. Let $\overline{x} \in \Q^{n}$ be an optimal solution to this convex QP. Suppose there is a point $\tilde{x} \in \Q^{n}$ that is $\epsilon$-feasible for $[P]$. Then,
    \begin{equation}\label{superopt_ineq}
        f(\tilde{x}) \geq f(\overline{x}) - || \overline{\lambda} ||_{1}\epsilon.
    \end{equation}
    where $\overline{\lambda}$ is an optimal solution to the dual of $[P]$. 
    
    Moreover, $|| \overline{\lambda} ||_{1}$ can be bounded by a constant $g(A,b,H,c)$ whose bit-length is polynomial on the bit-complexity of the input $(A,b,H,c)$.
\end{proposition}
\begin{proof}
    See Appendix~\ref{appendix:superoptimality}.
\end{proof}

\begin{remark}
    This proposition can be readily applied to the dual of the convex QP which is again a convex QP (the quadratic term in the objective is negative definite). Indeed, if we denote by $q$ the dual objective and by $\tilde{\lambda}$ an $\epsilon$-feasible dual solution, we can apply our result to the convex QP with objective $q':= -q$ which yields
    \begin{equation*}
        q'(\tilde{\lambda}) \geq q'(\overline{\lambda}) - || \overline{x} ||_{1} \epsilon \iff q(\tilde{\lambda}) \leq q(\overline{\lambda}) + || \overline{x} ||_{1} \epsilon.
    \end{equation*}
\end{remark}

Next, we elaborate on the relevance of Proposition~\ref{proposition:superoptimality}. Suppose we are given:
\begin{itemize}
\item A (not necessarily convex) optimization problem $[Z]$ whose optimal objective value is denoted by $\overline{z}$;
\item A convex relaxation $[P]$ of $[Z]$, with objective function $f$, and its dual $[D]$, with objective function $q$;
\item An $\epsilon$-feasible primal-dual pair $(\tilde{x},\tilde{\lambda})$ and an optimal primal-dual pair $(\overline{x},\overline{\lambda})$ for $[P]$-$[D]$.
\end{itemize}
Assume that strong duality holds\footnote{An analogous analysis can be done is there exists a duality gap; we assumed strong duality for clarity of explanation.} for the pair of convex problems $[P]$-$[D]$. Figure~\ref{fig:superopt} depicts two inherent risks when dealing with $\epsilon$-feasible solutions: incurring in (1) invalid lower bounds, and (2) poor primal bounds for $[Z]$. Indeed, (1) an \textit{infeasible} (even if $\epsilon$-feasible) dual solution could provide an invalid lower bound for $[Z]$, i.e., $\overline{z} < q(\tilde{\lambda})$. On the other hand, (2) if a primal $\epsilon$-solution $\tilde{x}$ to the relaxation $[P]$ is used as an approximate or even an exact solution\footnote{For instance, for radial networks the Jabr SOCP relaxation is exact.} to $[Z]$ and we do not have guarantees such as those provided by Proposition~\ref{proposition:superoptimality}, then the value $f(\tilde{x})$ may be significantly off (far from $\overline{z}$). 

\begin{figure}[H]
\centering
\includegraphics[scale=0.85]{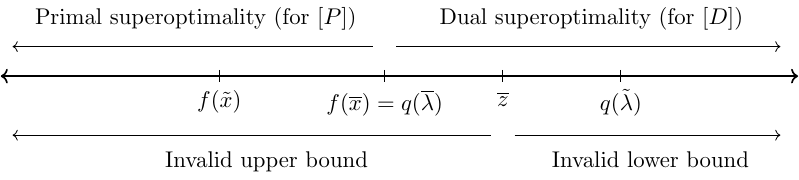} 
\caption{Validity of bounds and superoptimality of $\epsilon$-feasible solutions.}\label{fig:superopt}
\end{figure}

Results such as inequality~\eqref{superopt_ineq} are appealing since the bound has a linear dependence on $\epsilon$. Unfortunately, small dual violations are not the only ingredient for accurate bounds; problem structure is key. In~\ref{appendix:superopt_socps} we show that if $[P]$ and $[D]$ are a primal-dual pair of SOCPs for which strong duality holds, then any $\epsilon$-feasible solution $\tilde{x}$ satisfies $c^{\top} \tilde{x} \geq c^{\top}\overline{x} - ( ||\overline{\lambda}||_{1} + ||\overline{u} ||_{1}) \epsilon$ where $c$ denotes the primal objective and $(\overline{x},(\overline{\lambda},\overline{u}))$ a primal-dual optimal pair. Thus, a natural question is: can we always provide a 'reasonable' bound on $||\overline{\lambda}||_{1} + ||\overline{u} ||_{1}$? From a fundamental perspective, such a bound should be polynomial on the bit-length of the problem data -- as in the case for LPs or convex QPs\footnote{The existence of SOCPs whose unique solution is irrational are folklore results, see~\cite{bienstock+etal23} for examples.}. In what follows we provide a negative answer to the latter question even for the case of the Jabr SOCP for the simplest possible network -- two buses and one transmission line.

\begin{proposition}\label{proposition:nastyACOPF}
    There exists an ACOPF instance on a two-bus network with standard line parameters and rational data, where the only two AC feasible solutions are irrational. Additionally, the unique minimizer of the Jabr SOCP in this instance is also irrational. Furthermore, the dual problem to this Jabr SOCP has a unique maximizer, which is irrational as well.
\end{proposition}

\begin{proof}
    See Appendix~\ref{appendix:nastyACOPF}.
\end{proof}

\begin{remark}
Interestingly, this proposition implies that the feasible region of an ACOPF instance can be disconnected even for the simplest possible non-trivial network and that its convex hull is described by the feasible region of the Jabr SOCP (since the network is radial~\cite{jabr06}). Also see the classical example in \cite{hiskens+davy01}. Moreover, this example shows that to answer whether AC-Feasibility~\cite{lehmann+etal16,bienstock+verma19} is in NP or not one cannot rely on an AC rational feasible solution as a polynomial-size certificate.
\end{remark}

In summary, LP and convex QP relaxations possess robust theoretical guarantees for bounding challenging optimization problems, in sharp contrast to what nonlinear relaxations such as SOCPs can offer. Given that this paper focuses on lower bounds for ACOPF, we are particularly interested in dual infeasibility of primal-dual solutions provided by the solvers (the primal being a convex relaxation of ACOPF). 
In~\ref{subsection:experiments_multi} we report on dual infeasibilities attained by solvers run on our linearly constrained models.

\section{Computational Experiments}\label{section:experiments}

We ran all of our experiments on an Intel(R) Xeon(R) 64-bit Linux machine, with 2 E5-2687W v3 $3.10$GHz CPUs, $20$ physical cores, $40$ logical processors, and $256$ GB RAM. We used three commercial solvers: Gurobi version 10.0.1~\cite{gurobi}, Artelys Knitro versions 13.2.0 and 14.1.0 beta~\cite{knitro}, and Mosek 10.0.43~\cite{mosek}. As a common interface for all SOCP and ACOPF models we used AMPL~\cite{ampl} via Python. We note that unlike Gurobi and Knitro, Mosek does not accept a constraint of the form $x^2 + y^2 \leq z^2$ or $x^2 + y^2 \leq wz$ as a conic constraint; thus we reformulated the SOCPs using a format that Mosek-AMPL was able to read. 
%Now we describe the parameter specifications for each solver.

We report extensive numerical experiments on instances from the following data sets:
medium and large-sized standard IEEE instances available from MATPOWER~\cite{zimmerman+etal11},
the Pan European Grid Advanced Simulation and State Estimation (PEGASE) project~\cite{josz+etal16}~\cite{fliscounakis+13}, ACTIVSg synthetic cases developed as part of the US ARPA-E GRID DATA research project ~\cite{birchfield+etal17}, ~\cite{birchfield+etal18}, and the largest instances from the Power Grid Library for Benchmarking AC Optimal Power Flow Algorithms~\cite{babaeinejadsarookolaee+etal21} (PGLIB). Our main focus are cases with 9000 buses or more.

Our cutting-plane algorithm is implemented in Python 3 and uses Gurobi 10.0.1 to solve LPs and convex QPs. All of our reported single-period experiments were obtained with the following parameter setup (c.f.~\ref{section:thealgorithm}): barrier convergence tolerance and absolute feasibility and optimality tolerances equal to $10^{-6}$, $\epsilon = 10^{-5}$, $p_{Jabr} = 0.55$, $p_{i2} = 0.15$, $p_{limit} = 1$, $T_{age} = 5$, $\epsilon_{par} = 10^{-5} /2$, $\epsilon_{ftol} = 10^{-5}$, and $T_{ftol} = 5$. For our multi-period experiments we used the numerically better-behaved i2+($\rho)$ relaxation with $\rho = 10^{2}$, and, as a result, we were able to increase solution accuracy: we set absolute feasibility and optimality tolerances equal to $10^{-8}$, and we relaxed $T_{ftol}$ from $5$ to $3$ for $T=12$ and to $2$ for $T=24$. All of our code, AMPL model files, and solution files can be downloaded from \url{www.github.com/matias-vm}.

Next we describe the symbols used in our tables. The character $``-"$ denotes that the solver did not converge, while  $``\text{TLim}"$ means that the solver did not converge within our time limit. By convergence we mean that the solver \emph{declares} to have obtained an \emph{optimal} solution, within the previously defined tolerances. 
% We remark that Gurobi and Knitro provide control of absolute primal and dual feasibility tolerances~\cite{gurobi,knitro}, while Mosek only allows controlling primal and dual feasibility tolerances normalized by the RHS of the constraints ~\cite{mosek} \textcolor{blue}{what if the RHS = 0? They may normalize some other way.}.
Further, $``\text{INF}"$ means that the instance was declared infeasible by the solver, while $``\text{LOC INF}"$, used by KNITRO, means that the solver converged to an infeasible point. Moreover, if Gurobi declares 
\textit{numerical trouble} while solving our LPs or convex QPs at some iteration of our cutting-plane algorithm, we report the objective value of the previous iteration followed by the character $``*"$. The objective values and running times are reported with 2 decimal places.

We remark that, to the best of our knowledge, this is the first computational study which compares the performance of three leading commercial solvers on the Jabr SOCP using a common framework (AMPL). We evaluate the \textit{solvers} on the Jabr SOCP, and compare their performance with that of our warm-started i2+ formulations. For the solvers we use Jabr instead of the i2 SOCP because the former is numerically better behaved from the solvers' perspective (c.f.~\ref{subsection:newrelaxation}). We report on these numerical issues in~\ref{subsubsection:performance_socps}; also compare tables~\ref{table:ws_loads_T4_jabrsocp} and~\ref{table:ws_loads_T4_i2socp+}.

\subsection{Single-Period Experiments} 

\paragraph{SOCPs: Solver parameters} We set a time limit of $1,000$ seconds for all of our SOCP experiments\footnote{One iteration of our algorithm requires approximately $ \, 60-100$ seconds.}. Next we describe additional parameter specifications for each solver.
\begin{itemize}
    \item We use Gurobi's default homogeneous self-dual embedding interior-point algorithm (barrier method without \emph{Crossover}, and \emph{Bar Homogeneous} set to $1)$, and we set the parameter \emph{Numeric Focus} equal to $1$. Barrier convergence tolerance and absolute feasibility and optimality tolerances were set to $10^{-6}$. Gurobi was allowed 20 threads.
    \item We use Knitro's default Interior-Point/Barrier Direct Algorithm, with absolute feasibility and optimality tolerances equal to $10^{-6}$. We used the HSL MA57 sparse symmetric indefinite linear solver, and the Intel Math Kernel Library (MKL) functions for Basic Linear Algebra Subroutines (BLAS), i.e., for basic vector and matrix computations. Moreover, $20$ threads were used with Knitro. SOCPs were explicitly input to Knitro  as convex problems. When computing primal bounds, we employed the linear solver HSL MA97 whenever Knitro under MA57 was not converging.
    \item We use Mosek's default homogeneous and self-dual interior-point algorithm for conic optimization. We set the relative termination tolerance, as well as primal and dual feasibility tolerances to $10^{-6}$. We used $20$ threads with Mosek.
\end{itemize}

\begin{remark}
    We did not set a time limit for computing single-period ACOPF primal bounds.
\end{remark}

\subsubsection{Cut Computations}\label{subsection:cutcomputations}

Table~\ref{table:cuts} summarizes outcomes on cut computations for a substantial number of instances from the libraries described above. Under ``Cutting-Plane," ``Objective" reports the objective of the last iteration of our algorithm,   ``Time" reports the total running time (in seconds) of our method;  ``Computed" reports the number of cuts computed throughout the entire procedure;  ``Added" exhibits the total number of cuts in our linearly constrained relaxation at the last round; %(these are the cuts used to warm-start our relaxations, c.f.~\ref{subsection:warmstarts}) \textcolor{blue}{it is best to leave this sentence for somewhere later};
and ``Rnd" is the number of rounds or our algorithm. Finally,  ``Primal bound" reports the objective value of a feasible solution to ACOPF and the amount of time (in seconds) required by Knitro for this purpose.

\begingroup
\setlength{\tabcolsep}{3pt}
\begin{table}
\caption{Performance of Cutting-Plane Algorithm (Not Warm-Started)}
\centering
\resizebox{\textwidth}{!}{
\begin{tabular}{ @{} l r r r r r r r r@{} }
\toprule
& \multicolumn{5}{c}{Cutting-Plane Algorithm} & \multicolumn{2}{c}{Primal bound} & \\
\cmidrule(l{0.5em}r{0.40em}){2-6} \cmidrule(l{0.5em}r{0.40em}){7-8}
\multicolumn{1}{l}{Case} & Objective & Time & Computed & Added & Rounds & Objective & Time & \\
\midrule
9241pegase & 309221.81 & 378.82 & 135599 & 29875 & 23 & 315911.56 & 96.74 & \\
9241pegase-api & 6924650.57 & 277.32 & 128316 & 30230 & 21 & 7068721.98 & 73.85 & \\
9241pegase-sad & 6141202.28 & 386.51 & 113686 & 27273 & 21 & 6318468.57 & 33.92 & \\
9591goc-api & 1346373.10 & 187.26 & 87812 & 22469 & 22 & 1570263.74 & 42.85 & \\
9591goc-sad & 1055493.25 & 246.87 & 90153 & 20514 & 27 & 1167400.79 & 28.15 & \\
ACTIVSg10k & 2476851.62 & 132.16 & 60803 & 18183 & 19 & 2485898.75 & 76.71 & \\
10000goc-api & 2502026.03 & 147.12 & 73084 & 19666 & 24 & 2678659.51 & 23.46 & \\
10000goc-sad & 1387303.02 & 114.97 & 58984 & 18528 & 17 & 1490209.66 & 103.06 & \\
10192epigrids-api & 1849488.30 & 152.87 & 97921 & 24882 & 22 & 1977687.11 & 117.15 & \\
10192epigrids-sad & 1672819.53 & 185.02 & 95740 & 23726 & 23 & 1720194.13 & 23.74 & \\
10480goc-api & 2708819.18 & 200.48 & 114967 & 29805 & 21 & 2863484.4 & 38.71 & \\
10480goc-sad & 2287314.69 & 270.38 & 118122 & 28004 & 24 & 2314712.14 & 27.93 & \\
13659pegase & 379084.55 & 841.83 & 176962 & 37297 & 22 & 386108.81 & 1184.15 & \\
13659pegase-api & 9270988.77 & 326.57 & 147479 & 34390 & 19 & 9385711.45 & 44.43 & \\
13659pegase-sad & 8868216.24 & 301.87 & 130682 & 32662 & 19 & 9042198.49 & 42.08 & \\
19402goc-api & 2448812.41 & 440.67 & 213564 & 52388 & 22 & 2583627.35 & 87.33 & \\
19402goc-sad & 1954047.79 & 488.33 & 218291 & 49749 & 25 & 1983807.59 & 64.01 & \\
20758epigrids-api & 3042956.88 & 464.17 & 189436 & 46124 & 25 & 3126508.30 & 61.39 & \\
20758epigrids-sad & 2612551.03 & 379.36 & 180790 & 44624 & 24 & 2638200.23 & 58.11 & \\
24464goc-api & 2560407.12 & 471.14 & 226595 & 57162 & 22 & 2683961.9 & 533.03 & \\
24464goc-sad & 2605128.51 & 506.39 & 222908 & 55242 & 23 & 2653957.66 & 73.87 & \\
ACTIVSg25k & 5993266.85 & 592.39 & 156285 & 43851 & 28 & 6017830.61 & 56.69 & \\
30000goc-api & 1531110.84 & 464.16 & 142385 & 41840 & 24 & 1777930.63 & 134.71 & \\
30000goc-sad & 1130733.51* & 147.74 & 76546 & 76546 & 6 & 1317280.55 & 565.05 & \\
ACTIVSg70k & 16326225.66 & 1065.76 & 350572 & 123431 & 13 & 16439499.83 & 240.55 & \\
78484epigrids-api & 15877674.54 & 1007.99 & 556893 & 240576 & 10 & 16140427.68 & 1079.03 & \\
78484epigrids-sad & 15175077.19 & 1062.55 & 501202 & 313587 & 8 & 15315885.86 & 343.45 & \\
\bottomrule
\end{tabular}
}
\label{table:cuts}
\end{table}
\endgroup

Overall,  our cut management heuristics yield very tight linearly constrained relaxations using a relatively small number of cuts - note that we could potentially add $3|\mathcal{E}|$ cuts per round (for each branch $\{k,m\}$ there are three inequalities~\eqref{rotatedconeinequalities} and \eqref{thermallimit} that might be violated). For instance, case ACTIVSg70k has $88207$ branches and after 10 rounds of cuts we {only keep $123431$ out of the $350572$ linear cuts computed throughout the course of our algorithm. Thus, fewer than $1.5$ linear cuts per branch give us a relaxation with \emph{optimality gap}\footnote{Given a primal bound of a minimization problem, we define the optimality gap of a relaxation of the given problem as $\frac{z_{p}-z_{r}}{z_{p}}$, where $z_{p}$ denotes the objective value of the primal bound and $z_{r}$ denotes the objective value of the relaxation.} at most $0.69 \%$.

We remark that for some instances the objective value of our procedure can be higher than the objective value of the Jabr SOCP since our algorithm is outer-approximating the feasible region of the i2 SOCP (c.f. Proposition~\ref{proposition:i2_stronger_than_jabr}).
 
We also note that 30000goc-sad was the only instance for which Gurobi experienced numerical difficulties while solving the linearly constrained relaxation (indicated by the character $``*"$ next to the objective value). In this case the reported objective value corresponds to the previous iteration -- setting a more aggressive cut management heuristic, for instance decreasing $T_{age}$ from $4$ to $5$, gave us numerically more stable cuts and a better bound.

Finally, we obtained our primal bounds by running Knitro with a \emph{flat-start}, i.e., we provided as initial point voltage magnitudes set to $1$ and all $\theta_{km} = 0$.

% Do cut computations experiments with Jabr's on and i2 off, then Jabr off and i2 on, for some percentage, and compare to the cut computations experiments of the previous subsection. Only for big cases.

\subsubsection{Performance on SOCPs}\label{subsubsection:performance_socps}

% Next we discuss the solvers' performance on the SOCPs. 
%For instance, neither of the solvers were able to converge on large cases such as 24464goc-sad, ACTIVSg25k and ACTIVSg70k for the i2 SOCP. Therefore we report on the Jabr SOCP. Altogether, with respect to number of cases declared optimal, Mosek outperforms the other two solvers, and second comes Knitro. Gurobi struggles in most of the instances with a notable exception case 78484epigrids-api, where it outperforms Mosek. 

In Table~\ref{table:jabrsocps}, we observe that for the cases in which at least two solvers converge, the reported bounds for the Jabr SOCP agree on the first 2 to 3 most significant digits. These differences in bounds reflect how numerically challenging the given instances are. We remind the readers of the parameter choices that we made in order for the solvers to achieve termination -- the solvers otherwise would often fail.

\begingroup
\setlength{\tabcolsep}{4pt}
\begin{table}
\caption{Solvers' Performance on Jabr SOCP}
\centering
\begin{tabular}{ @{} l r r r r r r r @{} }
\toprule
& \multicolumn{3}{c}{Objective} & \multicolumn{3}{c}{Time} & \\
\cmidrule(l{0.5em}r{0.40em}){2-4} \cmidrule(l{0.5em}r{0.40em}){5-7}
\multicolumn{1}{l}{Case} & Gurobi & Knitro & Mosek & Gurobi & Knitro & Mosek & \\
\midrule
9241pegase & - & 309234.16 & - & 82.11 & 34.68 & 31.11 & \\
9241pegase-api & - & 6840612.84 & - & 116.32 & 23.39 & 72.29 & \\
9241pegase-sad & - & 6083747.85 & - & 111.05 & 26.01 & 75.99 & \\
9591goc-api & 1346480.71 & 1348107.89 & 1345869.72 & 38.25 & 23.74 & 36.60 & \\
9591goc-sad & 1055698.54 & 1058606.56 & 1054379.58 & 49.29 & 32.83 & 37.61 & \\
ACTIVSg10k & - & 2468172.93 & 2466666.10 & 40.18 & 21.48 & 26.08 & \\
10000goc-api & - & 2507034.94 & 2498948.00 & 48.63 & 35.19 & 30.13 & \\
10000goc-sad & 1387288.49 & 1388679.63 & 1386041.07 & 23.58 & 26.27 & 23.68 & \\
10192epigrids-api & - & 1849684.14 & 1848873.47 & 75.82 & 42.69 & 29.09 & \\
10192epigrids-sad & - & 1672989.96 & 1672534.72 & 83.85 & 28.33 & 28.63 & \\
10480goc-api & - & 2708973.58 & 2707828.26 & 75.94 & 27.21 & 56.82 & \\
10480goc-sad & - & 2286454.3 & 2285547.23 & 149.93 & 38.17 & 59.48 & \\
13659pegase & 379135.73 & 379144.11 & - & 33.61 & 43.26 & 34.92 & \\
13659pegase-api & - & 9198542.14 & - & 162.21 & 30.64 & 105.11 & \\
13659pegase-sad & 8826902.31 & 8826958.23 & 8787429.86 & 83.75 & 31.84 & 108.74 & \\
19402goc-api & - & 2449020.25 & 2447799.72 & 158.12 & 152.89 & 103.04 & \\
19402goc-sad & - & 1954331.70 & 1952550.06 & 203.56 & 155.89 & 104.88 & \\
20758epigrids-api & - & - & 3040421.02 & 143.99 & TLim & 93.46 & \\
20758epigrids-sad & - & - & 2610196.94 & 98.30 & TLim & 75.88 & \\
24464goc-api & 2548335.96 & - & 2558631.63 & 603.95 & TLim & 129.90 & \\
24464goc-sad & - & - & 2603525.46 & 333.50 & TLim & 128.50 & \\
ACTIVSg25k & 5956787.54 & 5964417.54 & 5955368.56 & 169.66 & 87.14 & 87.18 & \\
30000goc-api & - & 1531256.65 & 1529197.81 & 207.60 & 118.80 & 123.38 & \\
30000goc-sad & - & - & 1130868.71 & 191.22 & TLim & 84.90 & \\
ACTIVSg70k & - & 16221577.73 & 16217263.66 & 553.26 & 320.98 & 232.47 & \\
78484epigrids-api & - & - & - & 756.00 & TLim & 637.48 & \\
78484epigrids-sad & 15180775.21 & - & 15169401.54 & 463.17 & TLim & 601.04 & \\
\bottomrule
\end{tabular}
\label{table:jabrsocps}
\end{table}
\endgroup

As we mentioned at the beginning of this section, the i2 SOCP is numerically even more challenging for the solvers than the Jabr SOCP. Indeed, in Table~\ref{table:i2socps} we can see that the solvers do struggle (see also Table~\ref{table:ws_loads_T4_i2socp+}). We studied in detail some cases where Gurobi AMPL declared optimality --for example case ACTIVSg70k-- while reporting variable bound max scaled violation equal to $8.43$ as well as large primal and dual residuals ($0.0128$ and $3.25$, respectively). Moreover, we noticed inconsistent termination status for cases 10192epigrids-sad, 10480goc-api, 20758epigrids-sad, and 30000goc-sad on Gurobi and Gurobi through AMPL (Gurobi-AMPL) using the same model; Gurobi AMPL declares optimal termination for these instances while Gurobi does not. Cases for which we were able to identify \emph{low quality} solutions or inconsistencies have been denoted with the character ``\dag" next to their reported objective value in Table~\ref{table:i2socps}.

%comment on checking primal solutions with a separate checker, as well as DUALS of the LP/QP. Main paper.
%Add somewhere that for cases with linear objective barrier was much faster than dual simplex for solving our CP relaxation
%We remark that for case ACTIVSg70k, Knitro is about 10 times faster without imposing max angle difference constraints (even though they are not binding).
%We also remark that MATPOWER does not converge for large cases.. (be specific)
%Stress out dual feasibility for linear models versus nonlinear models and lower bounds.

\begingroup
\setlength{\tabcolsep}{4pt} %columnseparation
\begin{table}
\caption{Solvers' Performance on i2 SOCP}
\centering
% \begin{tabular}{@{}p{0.12\textwidth}*{4}{L{\dimexpr0.22\textwidth-2\tabcolsep\relax}}@{}}
\begin{tabular}{ @{} l r r r r r r r @{} }
\toprule
& \multicolumn{3}{c}{Objective} & \multicolumn{3}{c}{Time} & \\
\cmidrule(l{0.5em}r{0.40em}){2-4} \cmidrule(l{0.5em}r{0.40em}){5-7}
\multicolumn{1}{l}{Case} & Gurobi & Knitro & Mosek & Gurobi & Knitro & Mosek & \\
\midrule
% 9241pegase & 309239.04 & - & - & 66.05 & TLim & 30.12 & \\
% 9241pegase-api & 6936976.46 & 6937000.11 & - & 72.85 & 128.18 & 27.39 & \\
% 9241pegase-sad & 6152055.63 & 6152089.57 & - & 64.79 & 93.01 & 28.82 & \\
% 9591goc-api & 1346494.26 & 1348755.73 & 1346492.24 & 24.17 & 22.86 & 27.49 & \\
% 9591goc-sad & 1055698.92 & 1059578.53 & 1055697.85 & 25.86 & 32.35 & 33.86 & \\
% ACTIVSg10k & 2477396.16 & 2477396.16 & - & 18.06 & 285.17 & 24.35 & \\
% 10000goc-api & 2502160.01 & 2502160.58 & - & 33.1 & 36.93 & 19.08 & \\
% 10000goc-sad & 1387385.37 & 1391246.65 & - & 22.81 & 82.59 & 19.43 & \\
10192epigrids-api & 1849683.44 & 1849684.2 & - & 37.44 & 19.4 & 30.74 & \\
10192epigrids-sad & 1672998.72\dag & 1672998.73 & - & 23.36 & 21.48 & 24.53 & \\
10480goc-api & 2709110.52\dag & 2709110.71 & - & 41.44 & 27.78 & 35.28 & \\
10480goc-sad & 2287736.73 & 2287715.33 & - & 41.11 & 28.46 & 28.48 & \\
13659pegase & 379142.67 & - & - & 52.12 & TLim & 36.49 & \\
13659pegase-api & 9287242.7 & 9287244.72 & - & 66.39 & 236.65 & 30.01 & \\
13659pegase-sad & 8878803.69 & - & - & 63.11 & TLim & 30.48 & \\
19402goc-api & 2449100.15\dag & 2449102.05 & - & 79.38 & 55.6 & 54.18 & \\
19402goc-sad & 1954367.11\dag & 1954367.2 & - & 180.97 & 59.46 & 82.55 & \\
20758epigrids-api & - & 3043275.95 & - & 79.03 & 64.17 & 56.02 & \\
20758epigrids-sad & 2612841.71\dag & 2612841.8 & - & 48.32 & 84.07 & 58.87 & \\
24464goc-api & 2560829.65\dag & - & - & 132.34 & TLim & 88.79 & \\
24464goc-sad & 2605532.65\dag & - & - & 74.43 & 916.1 & 65.41 & \\
ACTIVSg25k & 5994727.45 & - & - & 70.61 & TLim & 52.38 & \\
30000goc-api & 1531320.78 & 1531322.2 & - & 96.91 & 593.73 & 71.38 & \\
30000goc-sad & 1132242.88\dag & 1132256.94 & - & 78.0 & 325.11 & 74.61 & \\
ACTIVSg70k & 16333807.38\dag & - & - & 300.98 & TLim & 209.3 & \\
78484epigrids-api & 15882668.49 & 15882668.46 & 15882654.42 & 216.15 & 315.31 & 203.81 & \\
78484epigrids-sad & 15180792.15 & 15180792.0 & 15180763.6 & 250.43 & 376.82 & 222.17 & \\
\bottomrule
\end{tabular}
\label{table:i2socps}
\end{table}
\endgroup

\subsection{Warm-Started Single-Period Instances}\label{subsection:warmstarts}

As described in the introduction, in power engineering practice it is of interest to address problem instances where a limited change in data occurs after solving a previous problem. In the context of this paper, we can therefore assume that we have a \textit{warm-started formulation}, i.e., one where we  leverage previously computed cuts. Here we present this warm-starting feature of our algorithm; we justify its validity and show via numerical experiments its appealing lower bounding capabilities. 

We note that the convex inequalities~\eqref{rotatedconeinequalities}, which we use to develop cuts, do not depend on input data such as loads or operational limits. Any such cut remains valid and can be used if the associated branch remains operational.  This will be our strategy, below.

We created two kinds of perturbed instances: {\bf a)} Instances were the load of each bus was perturbed by a Gaussian $(\mu,\sigma) = (0.01 \cdot P_{d}, 0.01 \cdot P_{d})$, where $P_{d}$ denotes the original load, subject to the newly perturbed load being non negative; and {\bf b)} instances were the transmission line which carries the largest amount of active power in an ACOPF solution is turned off. With regards to cases of type a), we focus on relatively small individual load changes to model re-optimization after a relatively small time change.  In the multi-period setting, below, we consider much larger load changes, to capture hourly models. Cases of type b) do change the structure of the network and in their warm-start we removed any cuts associated with the switched-off branch.   

Tables~\ref{table:ws_loads_1_1} and~\ref{table:ws_onelineoff} summarize our warm-started experiments on perturbed instances from our data set in Table~\ref{table:cuts} and compare our algorithm and solvers' performance on the Jabr SOCP. ``First Round" reports the objective value and running time of the relaxation $M_{0}$ loaded with the cuts computed in Table~\ref{table:cuts}, i.e., our warm-started relaxation. Moreover, under ``Cutting-plane", we report on the objective value (at termination) and the total running time of our cutting-plane procedure (on the warm-started relaxation).  ``Jabr SOCP" and ``Primal bound" report, respectively, on the objective value and running time of the Jabr SOCP for the three solvers, and ACOPF primal solutions.

We stress that after just \textit{one} round, the bound provided by our algorithm is already quite good -- and also point out the comparison between the running time for our first round, and the solvers' running time. 

\paragraph{Loads perturbed -- Gaussian deltas.} 
For most of these cases, our procedure proves very tight lower bounds in less than 25 seconds (``First Round" column). Judging by the time required by Knitro  and by the number of cases in which the solvers converge running the SOCPs, it appears that these instances are, overall, more challenging than their unperturbed counterparts.

Our procedure also stands out in quickly lower bounding the largest cases. For instance, a very sharp bound for case ACTIVSg70k is obtained in 102.25 seconds, taking less than half of the time it takes the fastest SOCP solver to converge. Similar performance is achieved on the largest epigrids cases where our method is 3 to 5 times faster.    

An interesting empirical fact is that our cuts are robust with respect to load perturbations. Indeed, our evidence shows that there is not a considerable improvement from the ``First Round" to the ``Last Round" objectives.
%(in addition, we observed in our experiments that, on average, the number of newly computed cuts throughout the warm-started procedure, corresponds to roughly $25\%$ of the number of loaded cuts). 
Thus, the previously pre-computed cuts loaded to $M_{0}$ in the first iteration are, already, accurately outer-approximating the SOC relaxations.
%(the number of newly added cuts throughout the warm-started procedure also supports this observation, please see~\cite{cc} for more detailed tables).

Our linearly constrained relaxations are able to prove infeasibility for the modified case 9241pegase-api in 23.10 seconds while none of the three solvers were able to provide a certificate of infeasibility for the Jabr SOCP. Knitro required 1845.42 seconds to declare convergence to a locally infeasible solution. Similar results are obtained for case 24464ogc-api.

The only case were our method fails to provide a valid lower bound is case 30000goc-sad -- our minimization oracle reports numerical trouble and fails to provide a solution to our warm-started relaxation. This is not surprising since difficult numerical behavior was noticed when computing cuts for this case.

\paragraph{Transmission line with largest flow switched off.}
Overall, our method achieves a similar performance on this set of perturbed instances as in a); sharp lower bounds are obtained in about 25 seconds for most of the cases.

For this data set, our method and all of the SOCP solvers are able to prove infeasibility relatively quickly. On the other hand, our method proves a lower bound for ACTIVSg70k relatively quickly in the first round, but fails to converge in the next round due to numerical trouble caused by the newly added cuts.

As when perturbing loads, our warm-started formulation achieves a good performance on the largest epigrid cases --  bounds are sharp with respect to the SOC relaxations and it is at least 3x faster.

The only case were our method fails to provide a lower bound is case 30000goc-sad -- our minimization oracle reports numerical trouble and fails to provide a solution to our warm-started relaxation.

\begingroup
\setlength{\tabcolsep}{4pt}
\begin{sidewaystable}%[!htbp]
\footnotesize
\centering 
\vspace*{12cm}
\caption{Warm-Started Relaxations, Loads perturbed Gaussian $(\mu,\sigma) = (0.01 \cdot P_{d}, 0.01 \cdot P_{d})$}
\begin{adjustbox}{valign=m,center}
\resizebox{\textwidth}{!}{
\begin{tabular}{ @{} l r r r r r r r r r r r r r @{} }
\toprule
& \multicolumn{4}{c}{Cutting-Plane} & \multicolumn{6}{c}{Jabr SOCP} &  \\
\cmidrule(l{0.5em}r{0.40em}){2-5} \cmidrule(l{0.5em}r{0.40em}){6-11}
& \multicolumn{2}{c}{First Round} & & & \multicolumn{3}{c}{Objective} & \multicolumn{3}{c}{Time} & \multicolumn{2}{c}{Primal bound} & \\
\cmidrule(l{0.5em}r{0.40em}){2-3} \cmidrule(l{0.5em}r{0.40em}){6-8} \cmidrule(l{0.5em}r{0.40em}){9-11} \cmidrule(l{0.5em}r{0.40em}){12-13} 
\multicolumn{1}{l}{Case} & Objective & Time & Objective & Time & Gurobi & Knitro & Mosek & Gurobi & Knitro & Mosek & Objective & Time &  \\
\midrule
9241pegase    &     309288.32     &     13.78     &     309299.97     &    160.28    &   -   &    309302.67    &  -  &   73.12   &    32.21    &  36.04  & 315979.53 & 101.48 &      \\
9241pegase-api    &    INF      &     23.10     &     INF     &   23.10     &   -   &    -    &  -  &   134.53   &    TLim    &  72.96  & LOC INF & 1845.92 &  \\
9241pegase-sad    &     6153913.91     &    16.18      &    6154117.59      &    136.78    &   -   &    6096743.03    &  -  &   97.51   &    26.07    &  83.43  & 6333763.92 & 43.71 &  \\
9591goc-api    &     1343642.47     &     11.06     &     1343670.62    &    56.36    &   1343767.43   &    1345384.57    &  1343190.29  &   39.36   &    25.36    &  35.30  & 1571582.59 & 54.16 &  \\
9591goc-sad    &     1058124.48     &    12.62      &     1058157.44     &    65.37    &   1058337.76   &    1061275.83    &  1057323.31  &   51.85   &    34.04    &  37.52  & 1178895.53 & 29.53 &  \\
ACTIVSg10k    &    2475041.43      &      9.52    &     2475078.69     &    50.51     &   -   &    2466383.20    &  -  &   42.31   &    21.75    &  29.33  & 2484093.15 & 57.24 &  \\
10000goc-api    &   2502049.28       &    8.51      &     2502098.01    &    36.48     &   2501946.30   &    2507074.78    &  2499373.75  &   31.91   &    43.44    &  32.33  & LOC INF & 1677.21 &  \\
10000goc-sad    &     1388833.86     &    8.70      &     1388859.09     &    44.50     &   1388824.91   &    1390230.41    &  1387588.17  &   25.96   &    29.31    &  23.67  & 1493481.44 & 93.72 &  \\
10192epigrids-api    &     1848085.36     &    10.27      &    1848133.48      &    45.84    &   -   &    1848285.26    &  1847120.93  &   65.38   &    41.17    &  25.99  & LOC INF & 1458.35 &  \\
10192epigrids-sad    &     1672358.89     &    10.33      &     1672398.61     &    53.37    &   -   &    1672533.02    &  1671364.67  &   73.64   &    28.61    &  35.66  & 1717429.36 & 23.89 &  \\
10480goc-api    &      2704157.29    &    12.43      &     2704252.95     &    58.45    &   -   &    2704373.73    &  2703432.85  &   197.17   &    27.57    &  55.92  & 2868495.28 & 36.89 &  \\
10480goc-sad    &    2294908.37      &    12.81      &    2294990.69      &    70.93    &   -   &    2294080.35    &  2292830.56  &   185.22   &    35.90    &  58.31  & 2322198.81 & 27.34 &  \\
13659pegase    &     379742.62     &    60.74      &     379794.51     &    426.88    &   379799.37   &    379804.43    &  -  &   34.21   &    43.17    &  32.75  & 386765.25 & 370.23 &  \\
13659pegase-api    &     9253539.07     &    21.25      &     9253773.43     &    109.20    &   9181205.93   &    9181269.20    &  -  &   97.11   &    30.41    &  118.31  & 9368277.57 & 62.20 &  \\
13659pegase-sad    &     8865733.59     &    21.28      &     8865892.49     &    113.04    &   8824442.20   &    8824486.03    &  -  &   86.49   &    33.19    &  102.59  & 9039904.52 & 40.02 &  \\
19402goc-api    &     2452185.69     &    23.55      &     2452270.83     &    120.10    &   -   &    2452448.33    &  2451708.50  &   146.87   &    120.39    &  103.32  & LOC INF & 4440.99 &  \\
19402goc-sad    &    1956255.19      &    23.28      &     1956313.91     &    113.89    &   -   &    1956570.60    &  1955018.07  &   231.90   &    172.82    &  102.19  & 1986936.95 & 66.02 &  \\
20758epigrids-api    &     3043006.76     &     22.34     &    3043076.56      &    104.06    &   -   &    -    &  3032919.24  &   134.60   &    TLim    &  78.32  & LOC INF & 12425.89 &  \\
20758epigrids-sad    &     2610197.53     &     20.46     &     2610261.88     &    93.09    &   -   &    -    &  2608090.26  &   143.69   &    TLim    &  72.19  & 2635892.81 & 49.25 &  \\
24464goc-api    &     2561680.14     &   26.28      &      INF    &    50.38    &   -   &    LOC INF    &  -  &   223.07   &    573.37    &  118.6  & - & 19444.54 &  \\
24464goc-sad    &     2606391.76     &     26.78     &    2606473.78      &     133.54   &   -   &    -    &  2604708.86  &   423.12   &    TLim    &  128.84  & 2655942.01 & 72.48 &  \\
ACTIVSg25k    &     5988886.18     &    28.24      &     5989016.75     &    198.58    &   5952404.50   &    5960068.30    &  5949381.04  &   138.01   &    73.75    &  109.39  & 6013477.05 & 57.87 &  \\
30000goc-api    &    1527412.96      &   25.35       &   1527487.45   &    151.75    &   -   &    1528338.73    &  1525625.64  &   243.61   &    369.83    &  119.92  & LOC INF & 3407.47 &  \\
30000goc-sad    &    -      &     46.33     &     -     &     46.33     &   -   &    -    &  1132715.53  &   257.94   &    TLim    &  75.20  & 1318389.55 & 620.27 &  \\
ACTIVSg70k    &    16316572.42      &    102.25      &    16317886.35       &    536.51    &   -   &    16210682.53    &  16206290.43  &   498.80   &    309.56    &  229.07  & 16428367.50 & 243.84 &  \\
78484epigrids-api    &     15862318.24     &     115.76     &     15865624.98     &     883.93     &   -   &    -    &  15859950.52  &   757.64   &    TLim    &  642.24  & - & 8113.53 &  \\
78484epigrids-sad    &     15176866.00     &     151.77     &     15180592.27     &    1118.02    &   15182602.75   &    -    &  15174716.43  &   420.56   &    TLim    &  589.46  & 15316872.94 & 353.13 &  \\
\bottomrule
\end{tabular}
}
\end{adjustbox}
\label{table:ws_loads_1_1}
\end{sidewaystable}
\endgroup

\begingroup
\setlength{\tabcolsep}{4pt}
\begin{sidewaystable}
\centering
\vspace*{12cm}
\begin{adjustbox}{valign=m,center}
\rotatebox{180}{
\begin{minipage}{\textwidth}
\caption{Warm-Started Relaxations, Transmission Line with largest flow turned off}
\footnotesize
\resizebox{\textwidth}{!}{
\begin{tabular}{ @{} l r r r r r r r r r r r r r @{} }
\toprule
& \multicolumn{4}{c}{Cutting-Plane} & \multicolumn{6}{c}{Jabr SOCP} &  \\
\cmidrule(l{0.5em}r{0.40em}){2-5} \cmidrule(l{0.5em}r{0.40em}){6-11}
& \multicolumn{2}{c}{First Round} & & & \multicolumn{3}{c}{Objective} & \multicolumn{3}{c}{Time} & \multicolumn{2}{c}{Primal bound} & \\
\cmidrule(l{0.5em}r{0.40em}){2-3} \cmidrule(l{0.5em}r{0.40em}){6-8} \cmidrule(l{0.5em}r{0.40em}){9-11} \cmidrule(l{0.5em}r{0.40em}){12-13} 
\multicolumn{1}{l}{Case} & Objective & Time & Objective & Time & Gurobi & Knitro & Mosek & Gurobi & Knitro & Mosek & Objective & Time &  \\
\midrule
9241pegase  &  INF  &  10.86  &  INF  &  10.86  &  INF  &  INF  &  INF  &  8.36  & 7.55   &  10.22  & INF & 8.37 &  \\
9241pegase-api  &  INF  &  7.55  &  INF  &  7.55  &  INF  &  INF  &  INF  &  7.92  &  8.02  &  10.22  & INF & 8.23 &    \\
9241pegase-sad  &  INF  &  7.33  &  INF  &  7.33  &  INF  &  INF  &  INF  &  8.14  & 8.10 &  10.31  & INF & 8.46 &  \\
9591goc-api  &  1346470.95  &  10.42  &  1346859.06  &  60.76  &  1346969.75  &  1348591.44  &  1346437.99  &  39.30  &  17.98  &  36.89  & 1395829.51 & 28.08 &  \\
9591goc-sad  &  1055823.53  &  11.51  &  1056267.57  & 101.64   &  1056447.48  & 1059382.10 &  1055501.31  &  45.09  & 35.41   & 37.18 & 1199276.44 & 29.90 &  \\
ACTIVSg10k  & 2477043.05   &  9.94  &  2477537.79  &  75.85  &  -  &  2468821.96  &  2466981.35  &  44.81  &  21.60  &  17.52  & LOC INF & 7092.45 &  \\
10000goc-api  &  2506671.15  &  8.06  & 2509971.69 &  46.10  & 2509846.00 &  2514991.16  &  2506236.75  &  31.03  &  33.95  &  32.15  & 2692320.35 & 23.28 &  \\
10000goc-sad  &  1387382.65  &  8.76  &  1387515.89  &  66.14  & 1387480.33   &  1388870.68  &  1386283.75  &  26.53  &  34.24  &  24.14  & 1506187.88 & 108.19 &  \\
10192epigrids-api  &  1849901.82  &  9.47  &  1850621.81  &  68.73  &  -  &  1850788.76  &  1849821.44  &  69.81  &  38.01  & 25.30  & 2021493.05 & 117.18 &  \\
10192epigrids-sad  &  1673575.50  &  11.08  &  1674274.99  &  74.49  &  -  &  1674417.21  &  1673564.57  &  69.91  &  43.91  &  28.54  & 1734014.50 & 24.11 &  \\
10480goc-api  &  2710040.46  &  11.33  &  2711100.23  &  73.85  &  -  &  2711224.27  &  2710520.15  &  95.40  &  27.99  &  56.58  & 2862699.50 & 225.06 &  \\
10480goc-sad  &  2288069.64  &  13.47  & 2288969.47 & 98.88 & - &  2288069.23  &  2286864.08  &  106.54  &  37.65  &  59.43  & 2318279.76 & 26.13 &  \\
13659pegase  &  379102.13  &  53.29  &  379163.58  & 199.96 &  379177.99  &  379182.14  &  -  &  33.18  &  345.83  & 31.90 & 386126.93 & 394.93 &  \\
13659pegase-api  &  INF  &  10.81  &  INF  & 10.81 &  INF  &  INF  &  INF  &  10.94  &  8.90  &  13.79  & INF & 11.55 &  \\
13659pegase-sad  &  INF  & 10.63 &  INF  & 10.63 &  INF  &  INF  & INF &  11.02  &  10.80  &  13.76  & INF & 11.73 &  \\
19402goc-api  &  2450110.09  &  23.93  &  2451621.60  &  171.31 &  -  &  2451793.39  &  2450488.01  &  154.42  &  132.03  &  104.58  & 2587915.50 & 403.20 &  \\
19402goc-sad  & 1954365.39 & 23.70 &  1954881.06  &  191.52  &  -  &  1955116.35  &  1953676.05  &  258.93  &  156.10  &  102.63  & 1985954.83 & 63.38 &  \\
20758epigrids-api  &  3043482.21  &  20.44  &  3044690.46  & 133.92  & -  &  -  &  3041974.74  &  112.78  &  TLim  & 96.19 & 3132571.31 & 52.82 &  \\
20758epigrids-sad  & 2612646.70 & 20.62 & 2612786.37 & 115.89 &  -  &  -  &  2610315.41  &  169.67  &  TLim  &  73.02  & 2638560.64 & 47.87 &  \\
24464goc-api  & 2560669.11 &  25.94  & 2561110.03 & 161.80 &  2550118.22  &  - &  2559240.55  &  440.81  &  TLim  &  118.05  & 2684708.93 & 1663.63 &  \\
24464goc-sad  & 2605179.75 & 26.98 & 2605369.03 & 166.81 &  -  &   2605474.23 &  2603609.34  &  564.27  &  74.69  &  124.19  & 2654344.45 & 76.39 &  \\
ACTIVSg25k  &  6045885.88  &  27.52  &  6048122.86  & 238.67 &  6009656.52  &  6018875.03  &  6009500.57  &  144.28  &  65.42  & 81.41 & LOC INF & 1634.38 &  \\
30000goc-api  &  1531110.55  &  25.02  & 1531159.95 & 130.30 &  -  &  1532013.41  &  1529195.12  &  195.74  &  135.71  &  119.77  & LOC INF & 3203.26 &  \\
30000goc-sad  &  - &  45.60  &  -  &  45.60  &  -  &  -  &  1130917.78  &  218.12  &  TLim  &  74.27  & 1324622.71 & 186.43 &  \\
ACTIVSg70k  &  16426522.74  &  98.51  &  16426522.74*  &  98.51  &  -  &  -  &  -  &  150.77  &  TLim  &  129.60  & LOC INF & 3160.81 &  \\
78484epigrids-api  &  15888353.48  &  104.32  &  15892229.11  & 916.88 &  15894055.55  &  -  &  15880422.78  &  322.22  &  TLim  &  625.74  & 16169740.92 & 2328.79 &  \\
78484epigrids-sad  &  15179882.22  &  149.65  &  15185980.69  &  1151.33  &  15188085.99  &  -  &  15182701.65  &  437.59  & TLim   &  594.84  & 15330674.69 & 272.41 &  \\
\bottomrule
\end{tabular}
}
\label{table:ws_onelineoff}
\end{minipage}
}
\end{adjustbox}
\end{sidewaystable}
\endgroup

\subsection{Multi-Period Results}\label{subsection:experiments_multi}

\paragraph{Data.}
At present, there is a dearth of publicly-available, realistic multi-period AC load and generation data.  One partial source is the set of cases developed for the GO3 Competition \cite{go3}\footnote{The GO3 Competition does not provide explicit load data, since a welfare maximization problem needs to be solved to determine the active and reactive power consumption. We purposely avoided this dependence on the solution of another challenging optimization problem.}. We are also aware that creating \emph{good} ACOPF data is a non-trivial task~\cite{li+etal21}, ~\cite{birchfield+etal17}, but given that there were no publicly available multi-period load time-series to match the standard AC cases we developed our own data sets\footnote{These data sets can be downloaded from www.github.com/matias-vm.}. 

We created and worked with three data sets: (1) $T=4$ with increasing\footnote{Given that the PGLIB library~\cite{babaeinejadsarookolaee+etal21} contains congested instances, we applied decreasing (on expectation) active power loads perturbations to the PGLIB instances.} (on expectation) active power loads, to be precise, at each time-period $t \in \{1,2,3\}$, $P_{k,t}^{d}$ is set to $(1 + 0.01 \cdot t + u_{k,t}) \cdot P_{k}^{d} $, where $u_{k,t}$ is a uniformly distributed random variable with range $[0,0.025]$; $T=12$ where a half-a-day active power load curved is simulated, see Figure~\ref{fig:loadcurve_T12}; and (3) $T=24$ where a 24 hr active power load curve is simulated, see Figure~\ref{fig:loadcurve_T24}. Moreover, we assumed 50\% generator ramp up $r^{u}$ and down $r^{d}$ rates for consecutive time-periods (c.f.~\ref{multiperiod:formulation}) in all of the three settings. 

\begin{figure}[H]
  \centering
  \resizebox{\textwidth}{!}{
  \begin{minipage}[b]{0.48\textwidth}
    \includegraphics[width=\textwidth]{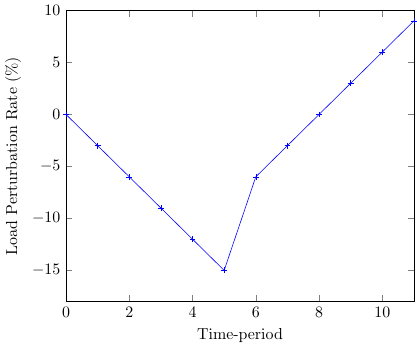}
    \caption{12hr load curve.}\label{fig:loadcurve_T12}
  \end{minipage}
  \hfill
  \begin{minipage}[b]{0.48\textwidth}
    \includegraphics[width=\textwidth]{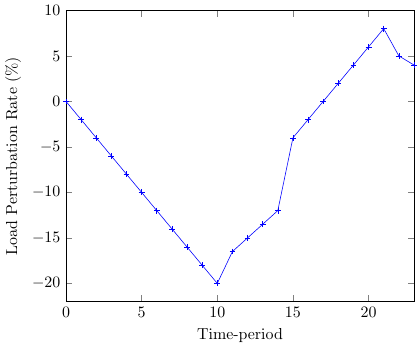}
    \caption{24hr load curve.}\label{fig:loadcurve_T24}
  \end{minipage}
  }
\end{figure}

\paragraph{Upper bound heuristics.}
In order to evaluate how tight a given lower bound is, an upper bound is needed. We relied on two simple heuristics to compute upper bounds. The first heuristic, which we term H1, consists in providing to Knitro the complete multi-period formulation. The parameter setup for H1 is: absolute feasibility tolerance equal to $10^{-6}$, while we set relative optimality tolerance to $10^{-3}$; simple variables bounds are enforced throughout the optimization; and $T_{ftol} = 3$; and time limit of $2400$ seconds for $T=4$, and $7,200$ seconds for $T=12,24$. If H1 fails, then we proceed with heuristic H2 which solves one single-period ACOPF instance at a time, and upon convergence, fixes the upper and lower limits of active power generation of the next instance as a function of generator outputs in the current period. To be precise, for every generator $k \in \mathcal{G}$ and any time-period $t \in \{0,\dots,T-2\}$, and provided that the nonlinear solver converges for the single-period instance at time $t$, then the output of generator $k$ for the time-period $t+1$ is bounded by $P^{min}_{k,t+1} \leq P^{g}_{k,t+1} \leq P^{max}_{k,t+1}$ where $P^{min}_{k,t+1} := (1 - r^{d}_{k,t+1}) P^{g}_{k,t}$ and $P^{max}_{k,t+1} := (1 + r^{u}_{k,t+1}) P^{g}_{k,t}$. If the solver fails to find a solution for some time-period, then we declare failure of H2. Overall, H2 succeeded in most of the instances for which we were able to obtain a primal bound. The parameter setup for H2 was the same as for H1 except that we set the time limit for single-period ACOPFs to $1,200$ seconds.

\paragraph{SOCPs: Solver parameters for multi-period instances.} With respect to our SOCP experiments, we set a time limit of $2,400$ seconds for $T=4$ instances, $4,800$ seconds for $T=12$, while we set $8,000$ seconds for $T=24$ instances. In what follows we describe the parameter specifications for each solver for the SOCP runs.
\begin{itemize}
    \item We use Gurobi's default homogeneous self-dual embedding interior-point algorithm (barrier method without \emph{Crossover}, and \emph{Bar Homogeneous} set to $1)$, and we set the parameter \emph{Numeric Focus} equal to $1$. Barrier convergence tolerance and absolute feasibility and optimality tolerances were set to $10^{-6}$. Gurobi was allowed 20 threads.
    \item We use Knitro's default Interior-Point/Barrier Direct Algorithm, with absolute feasibility and optimality tolerances equal to $10^{-6}$. We used the Intel MKL PARDISO sparse symmetric indefinite solver, and the Intel Math Kernel Library (MKL) functions for Basic Linear Algebra Subroutines (BLAS), i.e., for basic vector and matrix computations. Moreover, we gave Knitro $20$ threads to use for parallel computing features ($10$ threads were given to the linear solver). When solving the SOCPs, we explicitly instructed Knitro to handle the problem as convex, and to enforce simple variable bounds throughout the optimization.
    \item We use Mosek's default homogeneous and self-dual interior-point algorithm for conic optimization. We set the relative termination tolerance, as well as primal and dual absolute feasibility tolerances to $10^{-6}$. We assigned $20$ threads to Mosek.
\end{itemize}

We recall the reader that we use the i2+($\rho$) relaxation, with $\rho = 100$, as target set for our cutting-plane multi-period experiments. For our cutting-plane algorithm, when $T = 4$ a $1,200$ seconds time limit was enforced prior to \emph{starting} a new round of cutting; the time limit was $3,600$ seconds for $T=12$, and $6,000$ seconds for $T=24$.

\subsubsection{Not Warm-Started Cutting-Plane Algorithm: T = 4}\label{subsubsection:cutplane_scratch}

Table~\ref{table:cp_scratch_T4} summarizes our not warm-started cutting-plane experiments on multi-period instances with $T=4$. Columns ``\# Vars", ``\# Cons", and ``FTime" denote the number of variables, number of constraints and formulation time of the base model $M_{0}^{T}$. Under  ``First Iteration" we report on the objective value, running time, and maximum dual infeasibility error (c.f.~\ref{section:accuracy}) of the relaxation $M^{T}_{0}$, i.e., without any cuts. On the other hand, ``Last Iteration" presents the objective value, running time, total number of cuts held by the model $M_{last}^{T}$ on the last iteration of the algorithm, and maximum dual infeasibility error.  ``Total" presents the total running time of our algorithm and the total number of rounds of cuts, and ``ACOPF" describes the number of variables, number of constraints, and the objective value and running time of each ACOPF primal solution.

Overall, we can see that lower bounds are fairly tight; even our base model $M_{0}^{T}$ without any cuts, whose objective value is shown under ``First Iteration", attains quick and sharp lower bounds for these multi-period instances. For instance, an optimality gap of $6.91\%$ is achieved by $M_{0}^{T}$ for case ACTIVSg70k, and, after three rounds of cuts, nearly 800,000 cuts drive the gap down to $1.75\%$.

Our cut management heuristics prove to be empirically successful: dual infeasibility reported by Gurobi is either maintained or slightly worsened (compare ``DInfs" of ``First Iteration" versus that of ``Last Iteration") after adding a significant number of cuts. This means that our cuts are keeping our linearly constrained relaxation numerically stable and that our reported lower bounds are reasonably accurate (c.f.~\ref{section:accuracy}). Moreover, we observe that instances which require a significant number of rounds of cuts, say 10 or more, under the given time limit, notoriously benefit from our cut clean-up heuristic. We see that a small number of cuts (roughly 10\% of the total number of constraints) is good enough to outer-approximate the rotated-cones and attain sharp ACOPF lower bounds (as in the single-period case, c.f.~\ref{subsection:cutcomputations}).

\begingroup
\setlength{\tabcolsep}{4pt}
\begin{sidewaystable}
\centering
\vspace*{12cm}
\begin{adjustbox}{valign=m,center}
\rotatebox{180}{
\begin{minipage}{\textwidth}
\caption{Cutting-Plane Not Warm-Started, $T=4$ and loads perturbed $\text{Unif}(0,0.025) + 0.01 \cdot t$ for $t \in \{1,2,3\}$, ramping rates $50\%$}
\footnotesize
\resizebox{\textwidth}{!}{
\begin{tabular}{ @{} l r r r r r r r r r r r r | r r r r r @{} }
\toprule
& \multicolumn{12}{c}{Cutting-Plane} & \multicolumn{4}{c}{ACOPF} \\
\cmidrule(l{0.5em}r{0.40em}){2-13} \cmidrule(l{0.5em}r{0.40em}){14-17}
& & & & \multicolumn{3}{c}{First Iteration} & \multicolumn{4}{c}{Last Iteration} & \multicolumn{2}{c}{Total} & & & \multicolumn{2}{c}{PBound}  \\
\cmidrule(l{0.5em}r{0.40em}){5-7} \cmidrule(l{0.5em}r{0.40em}){8-11} \cmidrule(l{0.5em}r{0.40em}){12-13} \cmidrule(l{0.5em}r{0.40em}){16-17}
\multicolumn{1}{l}{Case} & \#Vars & \#Cons & FTime & Obj & Time & DInfs & Obj & Time & \#Cuts & DInfs & Time & Rnds & \#Vars & \#Cons & Obj & Time &  \\
\midrule
1354pegase & 66897 & 72564 & 1.35 & 299460.87 & 2.33 & 5.80e-10 & 303439.90 & 2.31 & 11043 & 3.32e-09 & 37.32 & 12 & 53772 & 70220 & 303727.82 & 7.60 &\\
% ACTIVSg2000 & 108129 & 114272 & 2.25 & 4921899.86 & 4.78 & 1.25e-07 & 5031214.62 & 4.5 & 20617 & 4.22e-09 & 77.10 & 12 &\\
2869pegase & 150431 & 161568 & 3.31 & 543415.33 & 6.16 & 1.23e-07 & 549381.91 & 7.83 & 26938 & 6.00e-10 & 154.55 & 14 & 120712 & 158388 & 549988.85 & 92.99 &\\
% 3120sp & 132784 & 143456 & 2.99 & 8571392.38 & 6.66 & 6.76e-07 & 8821005.08 & 5.02 & 12154 & 6.69e-08 & 69.86 & 12 &\\
3375wp & 148217 & 159692 & 3.18 & 29810390.36 & 6.79 & 5.13e-08 & 30318794.35 & 10.92 & 16457 & 1.67e-07 & 171.70 & 16 & 117364 & 151844 & 30410712.08 & 63.05 &\\
6468rte & 319517 & 323396 & 6.57 & 349845.41 & 13.77 & 1.29e-04 & 355995.14 & 11.83 & 46902 & 2.72e-06 & 354.29 & 15 & 247296 & 321888 & 356969.89 & 2596.76 &\\
% 6470rte & 320154 & 323860 & 6.65 & 396646.83 & 13.56 & 9.59e-05 & 403692.67 & 16.96 & 47163 & 2.71e-06 & 378.03 & 15 &\\
% 6515rte & 322240 & 325904 & 6.60 & 440282.37 & 13.76 & 5.38e-06 & 449724.7 & 13.77 & 47964 & 2.76e-06 & 417.53 & 15 &\\
9241pegase & 524028 & 538308 & 10.93 & 1250007.83 & 27.06 & 1.92e-07 & 1269609.76 & 258.55 & 318602 & 5.63e-05 & 1403.50 & 7 & 412248 & 543530 & 1297205.96 & 172.18 &\\
% 9241pegase-api & 524028 & 538308 & 10.98 & 24620714.76 & 31.67 & 4.46e-05 & 25513441.64 & 87.31 & 122673 & 1.18e-05 & 1258.06 & 12 &\\ %adjust
% 9241pegase-sad & 524028 & 538308 & 10.98 & 24620714.76 & 31.67 & 4.46e-05 & 25513441.64 & 87.31 & 122673 & 1.18e-05 & 1258.06 & 12 &\\
% 9591goc-api & 511976 & 528888 & 11.20 & 5383534.13 & 31.18 & 1.03e-08 & 5600307.77 & 37.87 & 78811 & 1.10e-08 & 680.75 & 12 &\\
% 9591goc-sad & 511976 & 528888 & 10.61 & 4409274.91 & 37.01 & 1.86e-07 & 4634863.31 & 41.47 & 66246 & 9.32e-07 & 812.01 & 14 &\\
ACTIVSg10k & 456684 & 490360 & 9.98 & 10035162.94 & 24.82 & 7.07e-09 & 10222977.88 & 27.19 & 52082 & 2.10e-08 & 399.43 & 13 & 363940 & 470558 & 10265810.55 & 228.28 &\\
10000goc-api & 465676 & 495636 & 10.04 & 8916264.44 & 24.76 & 1.50e-09 & 9495292.75 & 20.41 & 61212 & 5.28e-07 & 492.16 & 19 & 368928 & 478650 & 10019708.68 & 1049.69 & \\
10000goc-sad & 465676 & 495636 & 10.03 & 5373568.66 & 28.48 & 8.39e-07 & 5456260.77 & 29.84 & 61388 & 1.12e-06 & 403.37 & 11 & 368928 & 478650 & 5866821.40 & 241.57 & \\
% 10000goc-api & 465676 & 495636 & 10.42 & 9833655.25 & 23.97 & 1.80e-09 & 10460309.37 & 21.65 & 61736 & 5.22e-07 & 497.12 & 19 & 368928 & 478650 & 10019708.68 & 1049.69 & \\ increasing perturb
% 10000goc-sad & 465676 & 495636 & 10.08 & 5533127.32 & 26.93 & 2.03e-07 & 5637699.07 & 27.62 & 61323 & 2.03e-06 & 396.59 & 11 & 368928 & 478650 & 5866821.40 & 241.57 & \\ increasing perturb
% 10192epigrids-api & 555467 & 563044 & 11.70 & 7488336.54 & 56.42 & 1.12e-06 & 7581031.80 & 89.28 & 104707 & 3.92e-07 & 1233.09 & 10 &\\
% 10192epigrids-sad & 555467 & 563044 & 11.70 & 6778440.31 & 35.23 & 1.31e-08 & 6850746.16 & 34.96 & 78136 & 2.00e-08 & 638.50 & 12 &\\
% 10480goc-api & 604140 & 598004 & 12.99 & 10679257.89 & 39.58 & 1.18e-08 & 11228410.57 & 46.2 & 114870 & 8.22e-09 & 821.15 & 12 &\\
% 10480goc-sad & 604140 & 598004 & 12.48 & 9119415.50 & 46.39 & 8.60e-09 & 9563037.81 & 64.66 & 109908 & 1.41e-08 & 1206.41 & 12 & \\
13659pegase & 723245 & 735740 & 15.46 & 1528552.09 & 31.02 & 1.29e-08 & 1551553.61 & 422.84 & 469991 & 9.30e-07 & 1380.08 & 6 & 567716 & 739636 & 1584844.81 & 3083.66 &\\
13659pegase-api & 723245 & 735740 & 15.71 & 34787242.54 & 39.17 & 2.84e-05 & 35703095.45 & 207.32 & 316521 & 2.24e-08 & 1207.94 & 9 & 567716 & 739636 & 36163923.41 & 1288.60 &\\
13659pegase-sad & 723245 & 735740 & 15.7 & 33913060.97 & 40.32 & 1.46e-08 & 34547415.27 & 97.98 & 243868 & 9.98e-08 & 1218.18 & 10 & 567716 & 739636 & 35199898.78 & 545.24 &\\
% 13659pegase-sad & 723245 & 735740 & 15.6 & 35565607.31 & 38.42 & 4.32e-08 & 36310272.21 & 152.54 & 198788 & 1.53e-07 & 1338.67 & 11 & 567716 & 739636 & 35199898.78 & 545.24 &\\ increasing perturb
% 19402goc-api & 1123850 & 1107532 & 24.06 & 9698080.84 & 89.59 & 2.25e-08 & 10173978.48 & 219.57 & 609325 & 3.33e-08 & 1386.01 & 7 &\\
% 19402goc-sad & \\
20758epigrids-api & 1106271 & 1127168 & 23.52 & 11540550.41 & 175.25 & 3.38e-08 & 11834277.30 & 373.71 & 453981 & 2.33e-06 & 1294.19 & 5 & 859924 & 1131168 & 12088587.29 & 1018.17 &\\
20758epigrids-sad & 1106271 & 1127168 & 24.30 & 10060842.01 & 86.20 & 4.97e-08 & 10219196.77 & 306.29 & 410645 & 1.22e-07 & 1294.96 & 7 & 859924 & 1131168 & 10317105.84 & 709.85 &\\
% 20758epigrids-sad & 1106271 & 1127168 & 24.92 & 10463665.05 & 87.19 & 5.88e-08 & 10645193.02 & 330.85 & 409471 & 1.88e-06 & 1497.43 & 7 & 859924 & 1131168 & 10317105.84 & 709.85 &\\ increasing perturb
% 24464goc-api & 1263422 & 1273332 &  &\\
% 24464goc-sad & 1263422 & 1273332 & 28.03 & 10218902.53 & 106.08 & 5.25e-08 & 10819254.64 & 268.6 & 778358 & 6.15e-08 & 1282.44 & 6 &\\
ACTIVSg25k & 1145783 & 1212392 & 24.89 & 24097056.99 & 80.20 & 1.93e-08 & 24710933.54 & 120.53 & 239603 & 9.12e-08 & 1228.25 & 8 & 902588 & 1170088 & 24835784.33 & 451.45 &\\
30000goc-api & 1253699 & 1366264 & 27.91 & 4916231.92 & 73.67 & 5.00e-07 & 5705500.71 & 82.73 & 112230 & 1.31e-06 & 1238.19 & 12 & 990172 & 1280368 & 6563873.25 & 1669.13 &\\
30000goc-sad & 1253699 & 1366264 & 27.62 & 4151693.26 & 87.16 & 7.53e-06 & 4340911.74 & 84.31 & 104180 & 7.58e-07 & 1213.47 & 11 & 990172 & 1280368 & - & 3666.85 & \\
% 30000goc-api & 1253699 & 1366264 & 28.94 & 5646260.59 & 77.37 & 2.52e-07 & 6482848.64 & 73.65 & 113504 & 3.05e-06 & 1226.69 & 12 & 990172 & 1280368 & 6563873.25 & 1669.13 &\\ increasing perturb
% 30000goc-sad & 1253699 & 1366264 & 27.71 & 4467059.21 & 76.84 & 7.58e-06 & 4694778.67 & 75.22 & 93206 & 1.22e-06 & 1265.22 & 13 & 990172 & 1280368 & - & 3666.85 & \\increasing perturb
ACTIVSg70k & 3115935 & 3316972 & 68.93 & 65302843.92 & 269.38 & 1.85e-08 & 68917427.01 & 389.02 & 799462 & 2.03e-08 & 1268.46 & 3 & 2448820 & 3175256 & 70148300.24 & 2034.37 &\\
78484epigrids-api & 4179320 & 4225032 & 89.55 & 60235793.42 & 347.62 & 1.49e-04 & 61087514.34 & 677.62 & 1496304 & 1.51e-04 & 1816.21 & 3 & 3230648 & 4252514 & 62011780.96 & 4831.70 & \\
78484epigrids-sad & 4179320 & 4225032 & 89.43 & 57816998.73 & 387.48 & 1.23e-04 & 58689459.71 & 571.38 & 1081287 & 1.18e-04 & 1361.42 & 2 & 3230648 & 4252514 & - & - &\\
% 78484epigrids-sad & 4179320 & 4225032 & 86.79 & 60663670.57 & 395.75 & 1.34e-04 & 61664529.61 & 485.37 & 1088097 & 1.27e-04 & 1285.20 & 2 & 3230648 & 4252514 & - & ReRun &\\ % increasing perturb (both heuristics hit TLim 2400, too small
\bottomrule
\end{tabular}
}
\label{table:cp_scratch_T4}
\end{minipage}
}
\end{adjustbox}
\end{sidewaystable}
\endgroup

\subsubsection{Warm-Started Cutting-Plane Algorithm: T = 4, 12, 24}\label{subsubsection:cutplane_ws}

We use a simple methodology to warm-start our Algorithm~\ref{subsubsection:basicalgorithm_multi}: we propagate cuts for the original instances to every time-period. To be precise, we create a pool that includes, for each branch $\{k,m\} \in \mathcal{E}$, all the associated single-period cuts -- which are assumed to have been previously computed (c.f.~\ref{subsection:cutcomputations}).  The cuts in the pool are propagated to every time-period $t \in \{0, \dots, T-1\}$ and are added to the base model $M_{0}^{T}$. 

In Tables~\ref{table:ws_loads_T4_jabrsocp} and~\ref{table:ws_loads_T4_i2socp+} we present the results of our warm-started experiments with $T=4$. In Table~\ref{table:ws_loads_T4_jabrsocp} we compare our lower bounds (obtained by warm-starting our base model with precomputed cuts) against the lower bounds obtained by solving the Jabr SOCPs with each of the three commercial solvers. In Tables~\ref{table:ws_loads_T4_i2socp+},~\ref{table:ws_loads_T12_i2socp+}, and~\ref{table:ws_loads_T24_i2socp+} we compare our bounds against the performance of the solvers on the i2+($\rho$) relaxation plus Jabr inequalities for every branch whose i2 inequality is bad; this relaxation is numerically better behaved than the i2  SOCP (see discussion in~\ref{subsection:newrelaxation}). 

In both tables the columns ``\# Vars", ``\# Cons", and ``FTime" denote the number of variables, number of constraints and formulation time of the base model $M_{0}^{T}$ (before adding the previously computed cuts). `First Iteration" reports on the objective value, running time, the number of cuts with which the instance was warm-started, and the maximum dual infeasibility error (c.f.~\ref{section:accuracy}) of the warm-started relaxation $M^{0}_{T}$. Multi-column ``Last Iteration" presents the objective value, running time, total number of cuts held by the model $M_{last}^{T}$ on the last iteration of the algorithm, and maximum dual infeasibility error of model $M_{last}^{T}$. ``Total" presents the total running time of our algorithm and the total number of rounds of cuts. Under the header ``Jabr SOCP", columns ``\#Vars" and ``\#Cons" denote the number of variables, number of constraints of the Jabr SOCP (which are the same for the three solvers given that we used a common AMPL model file), and ``Obj" and ``Time" show the objective value and running time of the Jabr SOCP for each solver. Finally, under column ``ACOPF" we report objective value of an ACOPF primal solution.

As in the single-period experiments (c.f. tables~\ref{table:ws_loads_1_1} and~\ref{table:ws_onelineoff}), ``First Iteration" shows that our warm-started relaxation, i.e., model $M_{0}^{T}$ loaded with the propagated precomputed cuts, attains sharp bounds fairly quickly. A notable example, in which at least 2 solvers converge in Table~\ref{table:ws_loads_T4_jabrsocp}, is case ACTIVSg25k for which Gurobi solves the convex QP in 94 seconds resulting in $2.1\%$ optimality gap. Moreover, after 9 rounds of cuts our algorithm is able to drive the down the optimality gap to $0.5\%$ at very high accuracy (c.f. Figure~\ref{fig:improvement}).

 Dual infeasibility in~\eqref{table:ws_loads_T4_jabrsocp}, as in the not warm-started experiments, is not significantly worsened by our cuts, i.e., they do not seem to ill-condition our instances. On the contrary, we see that for some instances, our cut management heuristics are able to improve on this metric. In Figure~\ref{fig:cutmanagement} we illustrates the evolution of the number of outer-envelope cuts with respect to iterations of our cutting-plane algorithm for two $T=4$ warm-started cases. For instance, ACTIVSg10k was warm-started with 28,184 cuts, and peaked 111,323 cuts at iteration $6$ which corresponds to the chosen threshold $T_{age}$ (c.f.~\ref{subsection:basicalgorithm}). From the $7$th iteration onwards, there is a steady cut-cleansing ending up with $51,188$ cuts. As expected, both instances end up with more cuts than were present when warm-started. 

With respect to the solvers' performance on the SOCPs, we see that all of the solvers struggle. In Table~\ref{table:ws_loads_T4_jabrsocp} we see differences up to the second digit in the objective values of instances on which at least 2 solvers converge, which corroborates that these are numerically challenging instances. Table~\ref{table:ws_loads_T4_i2socp+} exhibits even more striking results; here we ran the solvers on the i2+$(\rho)$ relaxation (the relaxation that we are outer-approximating with our cutting-plane algorithm). Recall that this relaxation is numerically better behaved than the i2 SOCP, hence even worse performance is expected on the latter relaxation (c.f.~\ref{subsubsection:performance_socps}).

Next, tables~\ref{table:ws_loads_T12_i2socp+} and~\ref{table:ws_loads_T24_i2socp+} show computational results for our $T=12$ and $T=24$ instances. We remark that both tables follow the same column structure as Table~\ref{table:ws_loads_T4_jabrsocp}. We see than Gurobi is able to robustly handle our very large LPs and convex QPs, such as case ACTIVSg70k, with almost 19 million variables and 20 million constraints. For this instance, we load 1.5 million precomputed cuts, and after two rounds of cuts we end up with 3.35 million cuts, and the solver outputs a lower bound with very small dual infeasibilities. 

We were only able to compute an AC primal bound for case 1354pegase $T=12$; our heuristics failed in all the other instances with $T=12$ and $T=24$.
\color{black}

\begin{figure}[H]
  \centering
  \begin{minipage}[b]{0.48\textwidth}
    \includegraphics[width=0.9\textwidth]{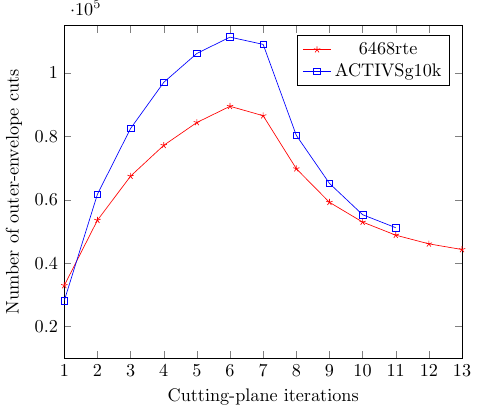}
    \caption{Cut management for $T=4$ warm-started instances.}\label{fig:cutmanagement}
  \end{minipage}
  \hfill
  \begin{minipage}[b]{0.48\textwidth}
    \includegraphics[width=\textwidth]{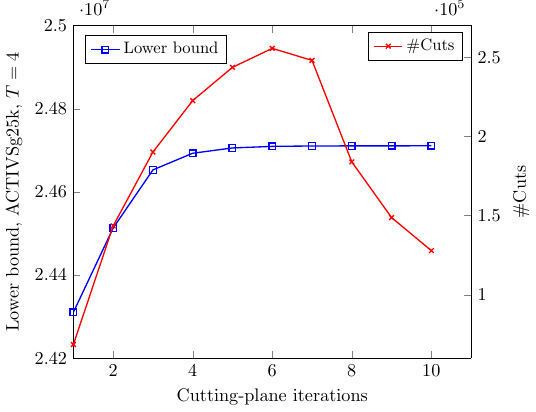}
    \caption{Lower bounds for case ACTIVSg25k and number of cuts.}\label{fig:improvement}
  \end{minipage}
\end{figure}

On tables~\ref{table:ws_dcopf_12} and~\ref{table:ws_dcopf_24} we compare our linearly-constrained warm-started relaxation against multi-period DCOPF\footnote{Multi-period DCOPF is solved with Gurobi using the same parameter setup as our cutting-plane algorithm.}. We report on the number of variables and constraints for each instance, as well as the objective, total running time, and dual infeasibility\footnote{For our relaxation, we report on the last iteration of our cutting-plane algorithm. Moreover, the number of constraints includes the total number of cuts kept during the last iteration.}. We remind the reader that DCOPF is a linearly-constrained approximation of ACOPF, not a relaxation~\cite{baker20}; hence, DCOPF infeasibility does not necessarily imply that the associated ACOPF is infeasible, in sharp contrast to what any relaxation for ACOPF guarantees.

Our ACOPF relaxations quadruple the size (in terms of number of variables and constraints) of the corresponding multi-period DCOPF instances. Gurobi clearly runs faster on multi-period DCOPF. Interestingly, there are some DCOPF instances for which Gurobi does not converge\footnote{We remind the reader that there are alternative formulations of DCOPF. Rather than directly solving an LP or convex QP, such alternative approaches could significantly enhance computational efficiency by exploiting problem structure (deploying a delayed constrained generation algorithm where the decision variables are nodal injections).} -- either because of suboptimal termination or because it runs into numerical trouble -- while it does converge for our ACOPF relaxations attaining small dual infeasibilities.

\begingroup
\setlength{\tabcolsep}{2pt} 
\begin{sidewaystable}
\centering
\tiny
\begin{adjustbox}{valign=m,center}
\hspace{-48em}
\resizebox{\textwidth}{!}{
\begin{minipage}{\textheight}
\vspace{12cm}
\caption{Warm-Started Relaxation versus Jabr SOCP, $T=4$ and loads perturbed $\text{Unif}(0,0.025) + 0.01 \cdot t$ for $t \in \{1,2,3\}$, ramping rates $50\%$}
\begin{tabular}{ @{} l r r r r r r r r r r r r r | r r r r r r r r | r r @{} }
\toprule
& \multicolumn{13}{c}{Cutting-Plane} & \multicolumn{8}{c}{Jabr SOCP} & \multicolumn{1}{c}{ACOPF}  \\
\cmidrule(l{0.5em}r{0.40em}){2-14} \cmidrule(l{0.5em}r{0.40em}){15-22} \cmidrule(l{0.5em}r{0.40em}){23-24}
& & & & \multicolumn{4}{c}{First Iteration} & \multicolumn{4}{c}{Last Iteration} & \multicolumn{2}{c}{Total} & & & \multicolumn{3}{c}{Obj} & \multicolumn{3}{c}{Time} &  & \\
\cmidrule(l{0.5em}r{0.40em}){5-8} \cmidrule(l{0.5em}r{0.40em}){9-12} \cmidrule(l{0.5em}r{0.40em}){13-14} \cmidrule(l{0.5em}r{0.40em}){17-19} \cmidrule(l{0.5em}r{0.40em}){20-22} 
\multicolumn{1}{l}{Case} & \#Vars & \#Cons & FTime & Obj & Time & \#Cuts & DInfs & Obj & Time & \#Cuts & DInfs & Time & Rnds & \#Vars & \#Cons &  Gurobi & Knitro & Mosek & Gurobi & Knitro & Mosek & PBound &  \\
\midrule
1354pegase & 66897 & 72564 & 1.41 & 301493.85 & 3.42 & 6000 & 2.55e-08 & 303438.51 & 1.93 & 10345 & 2.95e-08 & 36.04 & 12 & 55280 & 66149 & - & 303420.78 & 303415.82 & 15.34 & 11.65 & 14.79 & 303727.82 & \\
% ACTIVSg2000 & 108129 & 114272 & 2.30 & 4980452.12 & 7.30 & 12800 & 5.31e-10 & 5031150.99 & 4.71 & 18907 & 4.06e-09 & 66.37 & 11 &\\
% 2869pegase & 150431 & 161568 & 3.18 & 545907.74 & 8.74 & 14696 & 1.14e-09 & 549380.78 & 10.53 & 25450 & 3.05e-07 & 139.85 & 14 & 125524 & 149726 & - & - & 549373.24 & 34.24 & TLim & 36.07 & 549988.85 & \\
% 3120sp & 132784 & 143456 & 2.83 & 8712904.33 & 6.87 & 6828 & 2.31e-06 & 8820901.52 & 4.15 & 11119 & 4.21e-07 & 53.32 & 10 &\\
3375wp & 148217 & 159692 & 3.17 & 30132477.23 & 8.76 & 10448 & 1.42e-06 & 30318337.40 & 8.59 & 15786 & 4.44e-07 & 124.03 & 13 & 118128 & 142915 & - & - & 30233269.55 & 60.00 & TLim & 36.2 & 30410712.08 & \\
6468rte & 319517 & 323396 & 6.81 & 352645.45 & 18.24 & 32960 & 2.68e-06 & 355986.22 & 11.76 & 42631 & 2.72e-06 & 205.53 & 13 & 252240 & 302520 & 356099.86 & - & 355981.27 & 66.75 & TLim & 57.88 & 356969.89 & \\
% 6470rte & 320154 & 323860 & 6.91 & 399575.80 & 18.42 & 32128 & 2.64e-06 & 403686.40 & 12.42 & 44978 & 2.71e-06 & 225.84 & 13 &\\
% 6515rte & 322240 & 325904 & 6.83 & 444311.59 & 19.04 & 32576 & 2.67e-06 & 449717.49 & 13.41 & 46596 & 2.77e-06 & 213.55 & 12 &\\
9241pegase & 524028 & 538308 & 11.36 & 1258422.42 & 48.99 & 68472 & 3.61e-06 & 1270138.75 & 100.47 & 189019 & 5.18e-08 & 1268.90 & 7 & 433700 & 515921 & 1262979.52 & - & - & 182.32 & TLim & 89.64 & 1297205.96 &\\ %missing 9241pegase-api
% 9241pegase-sad & 524028 & 538308 & 11.26 & 24908277.90 & 41.66 & 58360 & 4.21e-07 & 25513547.80 & 49.86 & 102671 & 1.20e-04 & 1058.96 & 13 &\\
% 9591goc-api & 511976 & 528888 & 10.92 & 5491488.11 & 42.31 & 42288 & 7.22e-09 & 5600003.09 & 37.8 & 69527 & 1.65e-08 & 527.01 & 11 &\\
% 9591goc-sad & 511976 & 528888 & 11.04 & 4481323.44 & 47.25 & 34352 & 6.27e-08 & 4634280.57 & 39.24 & 61538 & 1.07e-07 & 616.02 & 12 &\\
ACTIVSg10k & 456684 & 490360 & 10.27 & 10117471.23 & 30.65 & 28184 & 5.72e-09 & 10222978.04 & 23.94 & 51188 & 5.01e-08 & 306.72 & 10 & 364824 & 437972 & - &  9894331.31 & 9876639.32 & 243.69 & 357.69 & 98.62 & 10265810.55 & \\
10000goc-api & 465676 & 495636 & 9.88 & 9129427.52 & 28.01 & 35856 & 6.70e-09 & 9495973.72 & 20.46 & 59211 & 3.92e-07 & 301.67 & 11 & 383978 & 467735 & - & - & 9479414.94 & 304.47 & TLim & 132.55 & 10019708.68 &  \\
% 10000goc-api & 465676 & 495636 & 10.60 & 10070913.15 & 30.34 & 35856 & 1.75e-09 & 10461346.69 & 20.98 & 60030 & 1.63e-08 & 304.47 & 11 & 373344 & 448745 & - & 10474040.25  & 10441768.08 & 268.28 & 142.15 & 137.47 & & \\
10000goc-sad & 465676 & 495636 & 10.21 & 5414194.06 & 30.94 & 35224 & 1.29e-06 & 5456194.0 & 26.47 & 53520 & 1.31e-06 & 336.21 & 10 & 383978 & 467735 & - & - & 5448060.51 & 316.29 & TLim & 127.01 & 5866821.40 & \\
% 10000goc-sad & 465676 & 495636 & 10.42 & 5584776.22 & 33.23 & 35224 & 3.01e-07 & 5637645.40 & 26.54 & 53532 & 7.08e-07 & 334.77 & 10 & 373344 & 448745 & - & 5638960.97 & 5627889.03 & 305.98 & 129.46 & 108.28 & & \\
% 10192epigrids-api & 555467 & 563044 & 11.93 & 7512047.88 & 76.60 & 47168 & 1.30e-08 & 7580722.78 & 75.61 & 78452 & 8.03e-08 & 1046.04 & 10 &\\
% 10192epigrids-sad & 555467 & 563044 & 11.93 & 6795215.27 & 42.72 & 44120 & 1.49e-08 & 6850513.47 & 33.61 & 76041 & 2.76e-07 & 487.60 & 10 &\\
% 10480goc-api & 604140 & 598004 & 13.94 & 11031766.83 & 53.11 & 68252 & 8.88e-09 & 11228179.65 & 46.79 & 103929 & 2.01e-08 & 615.54 & 10 &\\
% 10480goc-sad & 604140 & 598004 & 13.12 & 9332466.28 & 67.99 & 59944 & 1.26e-08 & 9562216.45 & 58.75 & 95633 & 1.59e-08 & 1039.30 & 12 &\\
13659pegase & 723245 & 735740 & 15.67 & - & 365.52 & 97036 & - & - & 365.52 & 97036 & - & 384.05 & 0 & 578580 & 686433 & 1488477.49 & - & - & 308.57 & TLim & 100.23 & 1584844.81 & \\ %fix RERUN
13659pegase-api & 723245 & 735740 & 15.28 & 35162915.42 & 74.46 & 85584 & 2.42e-06 & 35703716.41 & 178.18 & 218790 & 4.06e-07 & 1348.20 & 8 & 594950 & 719171 & 35070776.73 & - & - & 717.60 & TLim & 479.87 & 36163923.41 & \\
% 13659pegase-api & 723245 & 735740 & 16.00 & 35786784.28 & 53.29 & 78612 & 1.54e-07 & 36310142.41 & 100.72 & 151341 & 2.02e-07 & 1246.56 & 12 & 578580 & 686433 & - & - & - & 1113.15 & TLim & 475.84 & & \\ %fix
13659pegase-sad & 723245 & 735740 & 15.60 & 34093210.18 & 52.16 & 78612 & 2.89e-08 & 34547590.80 & 98.42 & 136983 & 5.16e-07 & 1273.65 & 12 & 594950 & 719171 & 34196027.45 & - & 33517709.2 & 523.11 & TLim & 466.32 & 35199898.78 & \\
% 13659pegase-sad & 723245 & 735740 & 16.00 & 35786784.28 & 53.29 & 78612 & 1.54e-07 & 36310142.41 & 100.72 & 151341 & 2.02e-07 & 1246.56 & 12 & 578580 & 686433 & 36220840.47 & - & 35678314.77 & 556.97 & TLim & 514.62 & &\\
% 19402goc-api & 1123850 & 1107532 & 23.93 & 9952000.8 & 109.77 & 119996 & 2.05e-07 & 10174180.35 & 131.42 & 258985 & 1.64e-08 & 1300.77 & 8 &\\
% 19402goc-sad & 1123850 & 1107532 & 23.93 & 9952000.8 & 109.77 & 119996 & 2.05e-07 & 10174180.35 & 131.42 & 258985 & 1.64e-08 & 1300.77 & 8 &\\ %fix
20758epigrids-api & 1106271 & 1127168 & 23.34 & - & 195.65 & 80904 & 5.54 & 11833435.09 & 658.47 & 260055 & 9.29e-05 & 1575.77 & 3 &918078 & 1105639 & - & - & 11830133.74 & 1093.22 & TLim & 322.49 & 12088587.29 & \\
% 20758epigrids-api & 1106271 & 1127168 & 23.77 & 10555415.11 & 103.95 & 77228 & 2.26e-08 & 10644806.04 & 285.17 & 350481 & 1.73e-08 & 1360.50 & 5 & 901264 & 1079825 & - & - & ** & 593.18 & TLim & 306.67 & & \\ %fix Error200 MosekAMPL
20758epigrids-sad & 1106271 & 1127168 & 23.64 & 10138699.01 & 92.13 & 77228 & 2.76e-08 & 10219195.88 & 181.38 & 185780 & 8.84e-08 & 1374.00 & 8 & 918078 & 1105639 & - & - & 10211969.97 & 398.31 & TLim & 497.46 & 10317105.84 & \\
% 20758epigrids-sad & 1106271 & 1127168 & 23.77 & 10555415.11 & 103.95 & 77228 & 2.26e-08 & 10644806.04 & 285.17 & 350481 & 1.73e-08 & 1360.50 & 5 & 901264 & 1079825 & - & 10647861.43 & 10639176.6 & 872.18 & 1006.38 & 306.10 & & \\
% 24464goc-api & 1263422 & 1273332 & 29.46 & 10605573.98 & 135.58 & 116704 & 5.91e-08 & 10821029.11 & 174.53 & 345141 & 5.69e-08 & 1367.58 & 7 & \\ %fix
% 24464goc-sad & 1263422 & 1273332 & 29.46 & 10605573.98 & 135.58 & 116704 & 5.91e-08 & 10821029.11 & 174.53 & 345141 & 5.69e-08 & 1367.58 & 7 & \\
ACTIVSg25k & 1145783 & 1212392 & 25.46 & 24310520.83 & 93.95 & 68676 & 1.01e-07 & 24710662.67 & 92.3 & 127912 & 1.61e-08 & 1206.14 & 9 & 912168 & 1099187 & 23293596.46 & - & 23640456.52 & 768.78 & TLim & 395.86 & 24835784.33 & \\
30000goc-api & 1253699 & 1366264 & 30.24 & 5185673.79 & 97.36 & 59732 & 3.54e-07 & 5706550.06 & 80.15 & 105806 & 9.79e-07 & 1146.25 & 12 & 1013234 & 1246465 & - & - & 5637560.70 & TLim & TLim & 644.65 & 6563873.25 & \\
% 30000goc-api & 1253699 & 1366264 & 28.69 & 5932087.09 & 97.94 & 59732 & 7.91e-07 & 6484024.94 & 89.19 & 107893 & 3.14e-07 & 1169.80 & 12 & 997640 & 1216767 & - & 6423570.57 & 6413504.47 & 1532.4 & 672.71 & 657.8 & - & \\
30000goc-sad & 1253699 & 1366264 & 28.03 & 4316717.55 & 138.08 & 179704 & 7.84e-07 & 4341119.26 & 77.37 & 91071 & 1.06e-06 & 994.70 & 9 & 1013234 & 1246465 & - & - & 4325675.85 & 
985.49 & TLim & 569.21 & - & \\
% 30000goc-sad & 1253699 & 1366264 & 30.28 & 4667063.52 & 118.17 & 179704 & 7.77e-06 & 4694899.01 & 69.74 & 89290 & 8.83e-07 & 879.97 & 9 & 997640 & 1216767 & - & - & 4678576.44 & 1704.42 & TLim & 419.69 & 5525738.45 & \\
ACTIVSg70k & 3115935 & 3316972 & 68.7 & 67721489.41 & 350.58 & 258228 & 3.04e-08 & 69422696.57 & 369.05 & 597271 & 1.11e-07 & 1301.25 & 3 & 2480088 & 3003929 & - & - & 65293153.18 & TLim & TLim & 925.00 & 70148300.24 & \\
78484epigrids-api & 4179320 & 4225032 & 96.42 & 60947456.94 & 494.72 & 625780 & 1.53e-04 & 61271585.98 & 563.69 & 1047809 & 1.52e-04 & 1523.35 & 2 & 3420774 & 4126501 & - & - & - & TLim & TLim & 1109.78 & 62011780.96 & \\
% 78484epigrids-api & 4179320 & 4225032 & 91.72 & 6.50461291 & 431.34  & 625780 & 1.58e-04 & INF & 192.78 & 912233 & - & 624.12 & 1 & 3393280 & 4071515 & * & - & * & 1063.64 & TLim & 1094.21 & - & \\ %missing 78kepi-api Error550
78484epigrids-sad & 4179320 & 4225032 & 97.61 & 58930587.37 & 743.6 & 888916 & 1.20e-04 & 59051305.24 & 845.85 & 1133203 & 1.20e-04 & 1569.46 & 1 & 3420774 & 4126501 & 59124882.16 & - & - &1586.33 & TLim & 1103.22 & - & \\
% 78484epigrids-sad & 4179320 & 4225032 & 97.62 & 61929996.53 & 737.35 & 888916 & 1.28e-04 & 62066386.13 & 942.91 & 1135230 & 1.28e-04 & 1663.25 & 1 & 3393280 & 4071515 & - & - & * & TLim & TLim & 1102.63 & - & \\ %missing Error-178kepi-api
\bottomrule
\end{tabular}
\label{table:ws_loads_T4_jabrsocp}
% \vspace{0.2cm} 
\caption{Warm-Started Relaxation versus i2 SOCP+, $T=4$ and loads perturbed $\text{Unif}(0,0.025) + 0.01 \cdot t$ for $t \in \{1,2,3\}$, ramping rates $50\%$}
\begin{tabular}{ @{} l r r r r r r r r r r r r r | r r r r r r r r | r r @{} }
\toprule
& \multicolumn{13}{c}{Cutting-Plane} & \multicolumn{8}{c}{i2 SOCP+} & \multicolumn{1}{c}{ACOPF}  \\
\cmidrule(l{0.5em}r{0.40em}){2-14} \cmidrule(l{0.5em}r{0.40em}){15-22} \cmidrule(l{0.5em}r{0.40em}){23-24}
& & & & \multicolumn{4}{c}{First Iteration} & \multicolumn{4}{c}{Last Iteration} & \multicolumn{2}{c}{Total} & & & \multicolumn{3}{c}{Obj} & \multicolumn{3}{c}{Time} &  & \\
\cmidrule(l{0.5em}r{0.40em}){5-8} \cmidrule(l{0.5em}r{0.40em}){9-12} \cmidrule(l{0.5em}r{0.40em}){13-14} \cmidrule(l{0.5em}r{0.40em}){17-19} \cmidrule(l{0.5em}r{0.40em}){20-22} 
\multicolumn{1}{l}{Case} & \#Vars & \#Cons & FTime & Obj & Time & \#Cuts & DInfs & Obj & Time & \#Cuts & DInfs & Time & Rnds & \#Vars & \#Cons &  Gurobi & Knitro & Mosek & Gurobi & Knitro & Mosek & PBound &  \\
\midrule
1354pegase & 66897 & 72564 & 1.41 & 301493.85 & 3.42 & 6000 & 2.55e-08 & 303438.51 & 1.93 & 10345 & 2.95e-08 & 36.04 & 12 & 56326 & 84155 & 303424.53 & - & - & 41.94 & TLim & 7.86 & 303727.82 & \\
% ACTIVSg2000 & 108129 & 114272 & 2.30 & 4980452.12 & 7.30 & 12800 & 5.31e-10 & 5031150.99 & 4.71 & 18907 & 4.06e-09 & 66.37 & 11 &\\
% 2869pegase & 150431 & 161568 & 3.18 & 545907.74 & 8.74 & 14696 & 1.14e-09 & 549380.78 & 10.53 & 25450 & 3.05e-07 & 139.85 & 14 & 125524 & 149726 & - & - & 549373.24 & 34.24 & TLim & 36.07 & 555394.08 & \\
% 3120sp & 132784 & 143456 & 2.83 & 8712904.33 & 6.87 & 6828 & 2.31e-06 & 8820901.52 & 4.15 & 11119 & 4.21e-07 & 53.32 & 10 &\\
3375wp & 148217 & 159692 & 3.17 & 30132477.23 & 8.76 & 10448 & 1.42e-06 & 30318337.40 & 8.59 & 15786 & 4.44e-07 & 124.03 & 13 & 121822 & 179665 & - & - & - & 60.00 & TLim & 15.28 & 30410712.08 & \\
6468rte & 319517 & 323396 & 6.81 & 352645.45 & 18.24 & 32960 & 2.68e-06 & 355986.22 & 11.76 & 42631 & 2.72e-06 & 205.53 & 13 & 269070 & 373246 & 355828.72 & - & - & 468.73 & TLim & 41.89 & 356969.89 & \\
% 6470rte & 320154 & 323860 & 6.91 & 399575.80 & 18.42 & 32128 & 2.64e-06 & 403686.40 & 12.42 & 44978 & 2.71e-06 & 225.84 & 13 &\\
% 6515rte & 322240 & 325904 & 6.83 & 444311.59 & 19.04 & 32576 & 2.67e-06 & 449717.49 & 13.41 & 46596 & 2.77e-06 & 213.55 & 12 &\\
9241pegase & 524028 & 538308 & 11.36 & 1258422.42 & 48.99 & 68472 & 3.61e-06 & 1270138.75 & 100.47 & 189019 & 5.18e-08 & 1268.90 & 7 & 451546 & 643811 & - & - & - & 411.55 & TLim & 92.33 & 1297205.96 &\\ %missing 9241pegase-api
% 9241pegase-sad & 524028 & 538308 & 11.26 & 24908277.90 & 41.66 & 58360 & 4.21e-07 & 25513547.80 & 49.86 & 102671 & 1.20e-04 & 1058.96 & 13 &\\
% 9591goc-api & 511976 & 528888 & 10.92 & 5491488.11 & 42.31 & 42288 & 7.22e-09 & 5600003.09 & 37.8 & 69527 & 1.65e-08 & 527.01 & 11 &\\
% 9591goc-sad & 511976 & 528888 & 11.04 & 4481323.44 & 47.25 & 34352 & 6.27e-08 & 4634280.57 & 39.24 & 61538 & 1.07e-07 & 616.02 & 12 &\\
ACTIVSg10k & 456684 & 490360 & 10.27 & 10117471.23 & 30.65 & 28184 & 5.72e-09 & 10222978.04 & 23.94 & 51188 & 5.01e-08 & 306.72 & 10 & 364824 & 437972 & - & - & - & 243.69 & TLim & 79.58 & 10265810.55 & \\
10000goc-api & 465676 & 495636 & 9.88 & 9129427.52 & 28.01 & 35856 & 6.70e-09 & 9495973.72 & 20.46 & 59211 & 3.92e-07 & 301.67 & 11 & 390042 & 567215 & - & - & - & 559.76 & TLim & 148.77 & 10019708.68 &  \\
% 10000goc-api & 465676 & 495636 & 10.60 & 10070913.15 & 30.34 & 35856 & 1.75e-09 & 10461346.69 & 20.98 & 60030 & 1.63e-08 & 304.47 & 11 & 373344 & 448745 & - & 10474040.25  & 10441768.08 & 268.28 & 142.15 & 137.47 & & \\
10000goc-sad & 465676 & 495636 & 10.21 & 5414194.06 & 30.94 & 35224 & 1.29e-06 & 5456194.0 & 26.47 & 53520 & 1.31e-06 & 336.21 & 10 & 390042 & 567215 & - & - & - & 419.79 & TLim & 104.86 & 5866821.40 & \\
% 10000goc-sad & 465676 & 495636 & 10.42 & 5584776.22 & 33.23 & 35224 & 3.01e-07 & 5637645.40 & 26.54 & 53532 & 7.08e-07 & 334.77 & 10 & 373344 & 448745 & - & 5638960.97 & 5627889.03 & 305.98 & 129.46 & 108.28 & & \\
% 10192epigrids-api & 555467 & 563044 & 11.93 & 7512047.88 & 76.60 & 47168 & 1.30e-08 & 7580722.78 & 75.61 & 78452 & 8.03e-08 & 1046.04 & 10 &\\
% 10192epigrids-sad & 555467 & 563044 & 11.93 & 6795215.27 & 42.72 & 44120 & 1.49e-08 & 6850513.47 & 33.61 & 76041 & 2.76e-07 & 487.60 & 10 &\\
% 10480goc-api & 604140 & 598004 & 13.94 & 11031766.83 & 53.11 & 68252 & 8.88e-09 & 11228179.65 & 46.79 & 103929 & 2.01e-08 & 615.54 & 10 &\\
% 10480goc-sad & 604140 & 598004 & 13.12 & 9332466.28 & 67.99 & 59944 & 1.26e-08 & 9562216.45 & 58.75 & 95633 & 1.59e-08 & 1039.30 & 12 &\\
13659pegase & 723245 & 735740 & 15.67 & - & 365.52 & 97036 & - & - & 365.52 & 97036 & - & 384.05 & 0 & 618066 & 859791 & - & - & - & 457.45 & TLim & 117.18 & 1584844.81 & \\ %fix RERUN
13659pegase-api & 723245 & 735740 & 15.28 & 35162915.42 & 74.46 & 85584 & 2.42e-06 & 35703716.41 & 178.18 & 218790 & 4.06e-07 & 1348.20 & 8 & 618066 & 859791 & - & - & - & 1093.04 & TLim & 155.84 & 36163923.41 & \\
% 13659pegase-api & 723245 & 735740 & 16.00 & 35786784.28 & 53.29 & 78612 & 1.54e-07 & 36310142.41 & 100.72 & 151341 & 2.02e-07 & 1246.56 & 12 & 578580 & 686433 & - & - & - & 1113.15 & TLim & 475.84 & & \\ %fix
13659pegase-sad & 723245 & 735740 & 15.60 & 34093210.18 & 52.16 & 78612 & 2.89e-08 & 34547590.80 & 98.42 & 136983 & 5.16e-07 & 1273.65 & 12 & 618066 & 859791 & - & - & - & 891.76 & TLim & 150.4 & 35199898.78 & \\
% 13659pegase-sad & 723245 & 735740 & 16.00 & 35786784.28 & 53.29 & 78612 & 1.54e-07 & 36310142.41 & 100.72 & 151341 & 2.02e-07 & 1246.56 & 12 & 578580 & 686433 & 36220840.47 & - & 35678314.77 & 556.97 & TLim & 514.62 & &\\
% 19402goc-api & 1123850 & 1107532 & 23.93 & 9952000.8 & 109.77 & 119996 & 2.05e-07 & 10174180.35 & 131.42 & 258985 & 1.64e-08 & 1300.77 & 8 &\\
% 19402goc-sad & 1123850 & 1107532 & 23.93 & 9952000.8 & 109.77 & 119996 & 2.05e-07 & 10174180.35 & 131.42 & 258985 & 1.64e-08 & 1300.77 & 8 &\\ %fix
20758epigrids-api & 1106271 & 1127168 & 23.34 & - & 195.65 & 80904 & 5.54 & 11833435.09 & 658.47 & 260055 & 9.29e-05 & 1575.77 & 3 & 950270 & 1340191 & - & - & - & 1529.93 & TLim & 356.82 & 12088587.29 & \\
% 20758epigrids-api & 1106271 & 1127168 & 23.77 & 10555415.11 & 103.95 & 77228 & 2.26e-08 & 10644806.04 & 285.17 & 350481 & 1.73e-08 & 1360.50 & 5 & 901264 & 1079825 & - & - & ** & 593.18 & TLim & 306.67 & & \\ %fix Error200 MosekAMPL
20758epigrids-sad & 1106271 & 1127168 & 23.64 & 10138699.01 & 92.13 & 77228 & 2.76e-08 & 10219195.88 & 181.38 & 185780 & 8.84e-08 & 1374.00 & 8 & 950270 & 1340191 & - & - & - & 1397.93 & TLim & 247.43 & 10317105.84 & \\
% 20758epigrids-sad & 1106271 & 1127168 & 23.77 & 10555415.11 & 103.95 & 77228 & 2.26e-08 & 10644806.04 & 285.17 & 350481 & 1.73e-08 & 1360.50 & 5 & 901264 & 1079825 & - & 10647861.43 & 10639176.6 & 872.18 & 1006.38 & 306.10 & & \\
% 24464goc-api & 1263422 & 1273332 & 29.46 & 10605573.98 & 135.58 & 116704 & 5.91e-08 & 10821029.11 & 174.53 & 345141 & 5.69e-08 & 1367.58 & 7 & \\ %fix
% 24464goc-sad & 1263422 & 1273332 & 29.46 & 10605573.98 & 135.58 & 116704 & 5.91e-08 & 10821029.11 & 174.53 & 345141 & 5.69e-08 & 1367.58 & 7 & \\
ACTIVSg25k & 1145783 & 1212392 & 25.46 & 24310520.83 & 93.95 & 68676 & 1.01e-07 & 24710662.67 & 92.3 & 127912 & 1.61e-08 & 1206.14 & 9 & 961854 & 1387817 & - & - & - & 589.52 & TLim & 200.22 & 24835784.33 & \\
30000goc-api & 1253699 & 1366264 & 30.24 & 5185673.79 & 97.36 & 59732 & 3.54e-07 & 5706550.06 & 80.15 & 105806 & 9.79e-07 & 1146.25 & 12 & 1018714 & 1524129 & - & - & - & 1761.14 & TLim & 286.2 & 6563873.25 & \\
% 30000goc-api & 1253699 & 1366264 & 28.69 & 5932087.09 & 97.94 & 59732 & 7.91e-07 & 6484024.94 & 89.19 & 107893 & 3.14e-07 & 1169.80 & 12 & 997640 & 1216767 & - & 6423570.57 & 6413504.47 & 1532.4 & 672.71 & 657.8 & - & \\
30000goc-sad & 1253699 & 1366264 & 28.03 & 4316717.55 & 138.08 & 179704 & 7.84e-07 & 4341119.26 & 77.37 & 91071 & 1.06e-06 & 994.70 & 9 & 1018714 & 1524129 & - & - & - & 727.32 & TLim & 247.6 & - & \\
% 30000goc-sad & 1253699 & 1366264 & 30.28 & 4667063.52 & 118.17 & 179704 & 7.77e-06 & 4694899.01 & 69.74 & 89290 & 8.83e-07 & 879.97 & 9 & 997640 & 1216767 & - & - & 4678576.44 & 1704.42 & TLim & 419.69 & 5525738.45 & \\
ACTIVSg70k & 3115935 & 3316972 & 68.7 & 67721489.41 & 350.58 & 258228 & 3.04e-08 & 69422696.57 & 369.05 & 597271 & 1.11e-07 & 1301.25 & 3 & 2584042 & 3765747 & - & - & - & 1312.16 & TLim & 405.88 & 70148300.24 & \\
78484epigrids-api & 4179320 & 4225032 & 96.42 & 60947456.94 & 494.72 & 625780 & 1.53e-04 & 61271585.98 & 563.69 & 1047809 & 1.52e-04 & 1523.35 & 2 & 3558322 & 4997073 & - & - & - & TLim & TLim & 1117.23 & 62011780.96 & \\
% 78484epigrids-api & 4179320 & 4225032 & 91.72 & 6.50461291 & 431.34  & 625780 & 1.58e-04 & INF & 192.78 & 912233 & - & 624.12 & 1 & 3393280 & 4071515 & * & - & * & 1063.64 & TLim & 1094.21 & - & \\ %missing 78kepi-api Error550
78484epigrids-sad & 4179320 & 4225032 & 97.61 & 58930587.37 & 743.6 & 888916 & 1.20e-04 & 59051305.24 & 845.85 & 1133203 & 1.20e-04 & 1569.46 & 1 & 3558322 & 4997073 & - & - & - & TLim & TLim & 1117.35 & - & \\
% 78484epigrids-sad & 4179320 & 4225032 & 97.62 & 61929996.53 & 737.35 & 888916 & 1.28e-04 & 62066386.13 & 942.91 & 1135230 & 1.28e-04 & 1663.25 & 1 & 3393280 & 4071515 & - & - & * & TLim & TLim & 1102.63 & - & \\ %missing Error-178kepi-api
\bottomrule
\end{tabular}
\label{table:ws_loads_T4_i2socp+}
\end{minipage}
}
\end{adjustbox}
\end{sidewaystable}
\endgroup

\begingroup
\setlength{\tabcolsep}{2pt} 
\begin{sidewaystable}
\centering
\tiny
\begin{adjustbox}{valign=m,center}
\hspace{40em}
\resizebox{\textwidth}{!}{
\rotatebox{180}{
\begin{minipage}{\textheight}
\vspace{-12cm}
\caption{Warm-Started Relaxation versus i2 SOCP+, $T=12$ hr active power load curve, ramping rates $50\%$}
\begin{tabular}{ @{} l r r r r r r r r r r r r r | r r r r r r r r | r r @{} }
\toprule
& \multicolumn{13}{c}{Cutting-Plane} & \multicolumn{8}{c}{i2 SOCP+} & \multicolumn{1}{c}{ACOPF}  \\
\cmidrule(l{0.5em}r{0.40em}){2-14} \cmidrule(l{0.5em}r{0.40em}){15-22} \cmidrule(l{0.5em}r{0.40em}){23-24}
& & & & \multicolumn{4}{c}{First Iteration} & \multicolumn{4}{c}{Last Iteration} & \multicolumn{2}{c}{Total} & & & \multicolumn{3}{c}{Obj} & \multicolumn{3}{c}{Time} &  & \\
\cmidrule(l{0.5em}r{0.40em}){5-8} \cmidrule(l{0.5em}r{0.40em}){9-12} \cmidrule(l{0.5em}r{0.40em}){13-14} \cmidrule(l{0.5em}r{0.40em}){17-19} \cmidrule(l{0.5em}r{0.40em}){20-22} 
\multicolumn{1}{l}{Case} & \#Vars & \#Cons & FTime & Obj & Time & \#Cuts & DInfs & Obj & Time & \#Cuts & DInfs & Time & Rnds & \#Vars & \#Cons &  Gurobi & Knitro & Mosek & Gurobi & Knitro & Mosek & PBound &  \\
\midrule
1354pegase & 201209 & 219772 & 4.25 & 855287.79 & 9.75 & 18000 & 1.70e-08 & 860664.27 & 6.16 & 30236 & 1.06e-08 & 92.63 & 12 & 168974 & 257483 & - & - & - & 245.39 &177.08 & 35.72 & 861473.50 & \\
9241pegase & 1574972 & 1626484 & 34.46 & 3554525.89 & 163.27 & 205416 & 5.04e-07 & 3565252.49 & 271.12 & 354209 & 4.32e-04 & 3811.42 & 8 & 1354634 & 1969307 & - & - & - & 1439.33 & TLim & 352.61 & - & \\
ACTIVSg10k & 1375020 & 1490960 & 30.21 & 28360735.35 & 106.03 & 84552 & 7.50e-09 & 28690135.28 & 94.69 & 148893 & 3.38e-07 & 1308.80 & 11 & 1151066 & 1714202 & - & - & - & 1543.11 & TLim & 190.39 & - & \\
10000goc-api & 1401204 & 1503620 & 32.03 & 27353308.25 & 96.60 & 107568 & 3.27e-07 & 28459989.11 & 74.63 & 183308 & 2.40e-08 & 1066.39 & 11 & 1170122 & 1736383 & - & - & - & TLim & TLim & 256.71 & - &\\
10000goc-sad & 1401204 & 1503620 & 30.42 & 16363341.55 & 133.30 & 105672 & 2.59e-06 & 16474320.51 & 105.74 & 144064 & 3.36e-06 & 1450.12 & 10 & 1170122 & 1736383 & - & - & - & 1918.80 & TLim & 247.71 & - & \\
ACTIVSg25k & 3447015 & 3675848 & 74.34 & 68451480.40 & 357.88 & 206028 & 2.68e-07 & 69534077.06 & 972.97 & 796784 & 9.75e-07 & 4073.97 & 5 & 2885558 & 4247241 & - & - & - & 3539.20 & TLim & 523.18 & - & \\
30000goc-api & 3768147 & 4127000 & 83.88 & 15513235.10 & 367.19 & 179196 & 7.00e-07 & 17079448.81 & 291.10 & 357763 & 1.06e-05 & 3667.40 & 9 & 3056138 & 4657273 & - & - & - & TLim & TLim & 756.77 & - & \\
30000goc-sad & 3768147 & 4127000 & 86.66 & 13129167.74 & 521.03 & 539112 & 1.68e-06 & 13190466.44 & 329.66 & 246646 & 2.45e-06 & 3901.76 & 8 & 3056138 & 4657273 & - & - & - & 1694.63 & TLim & 888.11 & - & \\
ACTIVSg70k & 9368583 & 10034036 & 207.22 & 184410148.60 & 1199.80 & 774684 & 4.07e-08 & 188736994.5 & 1276.64 & 1879714 & 3.66e-08 & 4702.30 & 3 & 7752122 & 11515211 & - & - & - & 3596.20 & TLim & 1124.19 & - & \\
78484epigrids-api & 12551704 & 12730080 & 263.06 & INF & 137.79 & 1877340 &  - & INF & 137.79 & 1877340 & - & 447.39 & 0 & 10674890 & 15270737 & - & - & - & 2875.97 & TLim & 1254.95 & - & \\  
78484epigrids-sad & 12551704 & 12730080 & 273.21 & 176961641.05 & 2697.25 & 2666748 & 1.36e-04 & 177335668.05 & 2828.46 & 3455898 & 1.36e-04 & 5465.67 & 1 & 10674890 & 15270737 & - & - & - & TLim & TLim & 1241.86 & - &\\
\bottomrule
\end{tabular}
\label{table:ws_loads_T12_i2socp+}
\vspace{1em}
\caption{Warm-Started Relaxation versus i2 SOCP+, $T=24$ hr active power load curve, ramping rates $50\%$}
\begin{tabular}{ @{} l r r r r r r r r r r r r r | r r r r r r r r | r r @{} }
\toprule
& \multicolumn{13}{c}{Cutting-Plane} & \multicolumn{8}{c}{i2 SOCP+} & \multicolumn{1}{c}{ACOPF}  \\
\cmidrule(l{0.5em}r{0.40em}){2-14} \cmidrule(l{0.5em}r{0.40em}){15-22} \cmidrule(l{0.5em}r{0.40em}){23-24}
& & & & \multicolumn{4}{c}{First Iteration} & \multicolumn{4}{c}{Last Iteration} & \multicolumn{2}{c}{Total} & & & \multicolumn{3}{c}{Obj} & \multicolumn{3}{c}{Time} &  & \\
\cmidrule(l{0.5em}r{0.40em}){5-8} \cmidrule(l{0.5em}r{0.40em}){9-12} \cmidrule(l{0.5em}r{0.40em}){13-14} \cmidrule(l{0.5em}r{0.40em}){17-19} \cmidrule(l{0.5em}r{0.40em}){20-22} 
\multicolumn{1}{l}{Case} & \#Vars & \#Cons & FTime & Obj & Time & \#Cuts & DInfs & Obj & Time & \#Cuts & DInfs & Time & Rnds & \#Vars & \#Cons &  Gurobi & Knitro & Mosek & Gurobi & Knitro & Mosek & PBound &  \\
\midrule
1354pegase & 402677 & 440584 & 8.66 & 1656321.34 & 20.38 & 36000 & 2.46e-08 & 1666609.97 & 12.74 & 63049 & 2.97e-08 & 180.61 & 11 & 337946 & 517475 & - & - & - & 345.44 & TLim & 53.32 & - &\\
9241pegase & 3151388 & 3258748 & 66.05 & 6865741.86 & 368.23 & 410832 & 4.33e-08 & 6925029.02 & 2427.02 & 1354277 & 1.25e-03 & 8419.66 & 4 & 2709266 & 3957551 & - & - & - & 1972.91 & TLim & 525.04 & - &\\
ACTIVSg10k & 2752524 & 2991860 & 66.13 & 54565119.63 & 226.82 & 169104 & 7.5e-09 & 55184650.09 & 211.53 & 298812 & 7.18e-08 & 2869.25 & 11 & 2302130 & 3446078 & - & - & - & 3918.14 & TLim & 396.91 & - &\\
10000goc-api & 2804496 & 3015596 & 62.15 & 51667906.99 & 208.71 & 215136 & 5.822e-07 & 53800966.46 & 175.46 & 388226 & 3.18e-07 & 2273.58 & 10 & 2340242 & 3490135 & - & - & - & 1332.84 & TLim & 542.88 & - &\\
10000goc-sad & 2804496 & 3015596 & 60.63 & 32512183.58 & 284.34 & 211344 & 3.46e-06 & 32678842.78 & 276.70 & 246720 & 7.97e-06 & 3099.33 & 9 & 2340242 & 3490135 & - & - & - & 2994.53 & TLim & 714.14 & - &\\
ACTIVSg25k & 6898863 & 7371032 & 150.16 & 132201219.46 & 816.18 & 412056 & 4.74e-07 & 134142620.00 & 2042.59 & 1385811 & 2.08e-06 & 6991.16 & 3 & 5771114 & 8536377 & - & - & - & 4451.39 & TLim & 963.91 & - &\\
30000goc-api & 7539819 & 8268104 & 167.43 & 28586461.36 & 755.74 & 358392 & 1.7313e-06 & 31616476.13 & 884.66 & 1064587 & 2.06e-05 & 6799.92 & 7 & 6112274 & 9356989 & - & - & - & TLim & TLim & 1139.21 & - &\\
30000goc-sad & 7539819 & 8268104 & 169.75 & 25717079.58 & 1091.27 & 1078224 & 2.52e-06 & 25811075.96 & 1021.24 & 1393213 & 1.23e-05 & 6249.53 & 5 & 6112274 & 9356989 & - & - & - & 3213.23 & TLim & 1125.14 & - &\\
ACTIVSg70k & 18747555 & 20109632 & 410.49 & 350310736.78 & 2641.18 & 1549368 & 2.407e-07 & 357083446.24 & 2687.31 & 3359811 & 2.98e-08 & 8496.71 & 2 & 1550424 & 23139407 & - & - & - & 5061.29 & TLim & 1331.11 & - &\\
78484epigrids-api & 25110280 & 24855336 & 533.07 & - & 1521.64 & 3754680 & - & - & 1521.64 & 3754680 & - & 2138.78 & 0 & 21349922 & 30681233 & - & - & - & TLim & 287.62 & 1434.04 & - &\\
78484epigrids-sad & 25110280 & 24855336 & 532.93 & 344805634.00 & 4749.24  & 5333496 & 1.34e-04 & - & TLim & 6915253 & - & 11561.02 & 1 & 21349922 & 30681233 & - & - & - & TLim & 284.28 & 1448.87 & - &\\
\bottomrule
\end{tabular}
\label{table:ws_loads_T24_i2socp+}
\end{minipage}
}
}
\end{adjustbox}
\end{sidewaystable}
\endgroup

\begingroup
\setlength{\tabcolsep}{2pt}
\centering
\begin{table}
\tiny
\caption{Warm-Started Relaxation versus DCOPF, $T=12$}
\resizebox{\textwidth}{!}{
\begin{tabular}{ @{} l r r r r r | r r r r r r @{} }
\toprule
& \multicolumn{5}{c}{Warm-Started Relaxation} & \multicolumn{5}{c}{DCOPF} & \\
\cmidrule(l{0.5em}r{0.40em}){2-6} \cmidrule(l{0.5em}r{0.40em}){7-11}
\multicolumn{1}{l}{Case}  & \#Vars & \#Cons & Obj & Time & DInfs & \#Vars & \#Cons &Obj & Time & Dinfs & \\
\midrule
1354pegase & 201209 & 250008 & 860664.27 & 92.63 & 1.06e-08 & 62369 & 67828 & 850023.48 & 2.44 & 1.72e-10 &  \\
9241pegase & 1574972 & 1980693 & 3565252.49 & 3811.42 & 4.32e-04 & 447608 & 477952 & 3627501.88 & 18.06  & 3.76e-10 &  \\
ACTIVSg10k & 1375020 & 1639853 & 28690135.28 & 1308.80 & 3.38e-07 & 449628 & 501812 & 28187885.77 & 19.50  & 2.34e-07 &  \\
10000goc-api & 1401204 & 1686928 & 28459989.11 & 1066.39 & 2.40e-08  & 446364 & 490232 & INF & 20.15 & - &  \\
10000goc-sad & 1401204 & 1647684 & 16474320.51 & 1450.12 & 3.36e-06 & 446364 & 490232 & 16114986.72 & 19.96 & 2.21e-07 &  \\
ACTIVSg25k & 3447015 & 4472632 & 69534077.06  & 4073.97  & 9.75e-07  & 1097943 & 1199456 & 67992943.20  & 48.82 & 7.88e-07 &  \\
30000goc-api & 3768147 & 4484763 & 17079448.81 & 3667.40  & 1.06e-05 & 1225815 & 1299860 & - & 75.87 & - &  \\
30000goc-sad & 3768147 & 4373646 & 13190466.44 & 3901.76  & 2.45e-06 & 1225815 & 1299860 & 12846216.92 & 53.33 & 7.65e-08 &  \\
ACTIVSg70k & 9368583 & 11913750 & 188736994.5  & 4702.30  & 3.66e-08  & 2977455  & 3195644 & - & 175.35 & - &  \\
78484epigrids-api & 12551704 & 14607420 & INF & 447.39  & - & 3555448 & 3699780 & INF & 4.19 & - &  \\
78484epigrids-sad & 12551704 & 16185978 & 177335668.05 & 5465.67 & 1.36e-04 & 3555448 & 3699780 & - & 175.91 & - &  \\
\bottomrule
\end{tabular}
\label{table:ws_dcopf_12}
}
\end{table}
\endgroup

\begingroup
\setlength{\tabcolsep}{2pt}
\centering
\begin{table}
\tiny
\caption{Warm-Started Relaxation versus DCOPF, $T=24$}
\resizebox{\textwidth}{!}{
\begin{tabular}{ @{} l r r r r r | r r r r r r @{} }
\toprule
& \multicolumn{5}{c}{Warm-Started Relaxation} & \multicolumn{5}{c}{DCOPF} & \\
\cmidrule(l{0.5em}r{0.40em}){2-6} \cmidrule(l{0.5em}r{0.40em}){7-11}
\multicolumn{1}{l}{Case}  & \#Vars & \#Cons & Obj & Time & DInfs & \#Vars & \#Cons &Obj & Time & Dinfs & \\
\midrule
1354pegase & 402677 & 503633 & 1666609.97  & 180.61   & 2.97e-08 & 124997 & 136696 & 1646661.83 &  4.61 & 2.15e-10 &  \\
9241pegase & 3151388 & 4613025  & 6925029.02 & 8419.66  & 1.25e-03 & 896660 & 961684 & 7013508.62 &  36.02 & 5.25e-10 &  \\
ACTIVSg10k & 2752524 & 3290672 & 55184650.09 & 2869.25  & 7.18e-08  & 901740 & 1013564 & 54241448.25 & 37.77 & 5.44e-08 &  \\
10000goc-api & 2804496 & 3403822 & 53800966.46 & 2273.58 & 3.18e-07 & 894816 & 988820 & INF & 40.25 & - &  \\
10000goc-sad & 2804496 & 3262316 & 32678842.78 & 3099.33 & 7.97e-06 & 894816 & 988820 & 32120062.03 & 41.02 & 1.97e-06 &  \\
ACTIVSg25k & 6898863 & 8756843  & 134142620.00 & 6991.16 & 2.08e-06 & 2200719 & 2418248 & 131325272.36 & 95.18 & 5.69e-09 &  \\
30000goc-api & 7539819 & 9332691 & 31616476.13 & 6799.92  & 2.06e-05 & 2455155 & 2613824 & - & 171.42 & - &  \\
30000goc-sad & 7539819 & 9661317 & 25811075.96 & 6249.53  & 1.23e-05 & 2455155 & 2613824 & 25279287.29 & 108.86 & 2.22e-08 &  \\
ACTIVSg70k & 18747555 & 23469443 & 357083446.24 & 8496.71 & 2.98e-08  & 5965299 & 6432848 & - & 829.66 & - &  \\
78484epigrids-api & 25110280 & 24855336 & - & 2138.78 & - & 7117768 & 7427052 & INF & 380.67 & - &  \\
78484epigrids-sad & 25110280 & 30188832 & 344805634.00  & 5399.62 & 1.34e-04 & 7117768 & 7427052 & - & 513.08 & - &  \\
\bottomrule
\end{tabular}
\label{table:ws_dcopf_24}
}
\end{table}
\endgroup

\subsection{Stability of dual Variables for Active-Power Balance Constraints}

In current energy pricing rules~\cite{oneill+etal05,gribik+etal07}, dual variables associated with active-power balance constraints~\eqref{AC:activepowerbal} of a welfare maximization problem constitute a key input for computing Locational Marginal Prices (LMPs). In general, linear models such as DCOPF are used for pricing because of their computational tractability. The trade-offs of using such approximations are well-known. As stated in~\cite{bichler+etal23}, ``If DCOPF does not adequately reflect the physics of the power grid, the prices retrieved from the DCOPF solution will not be accurate. Thus, tighter relaxations of ACOPF could lead to prices that better reflect scarcity in the physical network." Among their findings, the authors empirically show that nonlinear tighter relaxations such as the Jabr SOCP or the QC relaxation mitigate biasedness in price signals, in particular in congested networks, and lead to higher welfare.

Given the theoretical and empirical evidence presented in this paper, we believe that our linearly constrained relaxation is well-placed as a potential substitute for DCOPF in electricity market pricing. A first step in evaluating its potential is to understand whether the dual variables associated to active-power balance constraints of our dynamically built linearly constrained relaxations numerically converge to some vector of duals. In Table~\ref{table:convergence_prices} and Figure~\ref{fig:convergence_prices} below we provide preliminary evidence of numerical convergence for medium-sized $T=4$ instances. At each iteration $k > 1$ of our cutting-plane algorithm, we obtain the corresponding vector of duals $\lambda^{k}$ and compute its Euclidean distance to its predecessor $\lambda^{k-1}$.

\begingroup
\setlength{\tabcolsep}{3pt} 
\begin{table}
    \centering
    \footnotesize
    \caption{On the convergence of dual variables of active-power balance constraints $T=4$}
    \label{table:convergence_prices}
    \resizebox{\textwidth}{!}{
    \begin{tabular}{@{} l r r r r r r r r r r r @{}}
    \toprule
    & \multicolumn{10}{c}{$||\lambda^{k} - \lambda^{k-1} ||_{2}$} & \\
    \cmidrule(l{0.5em}r{0.40em}){2-11} 
    \multicolumn{1}{c}{Case} & \multicolumn{1}{c}{$k=2$} & \multicolumn{1}{c}{$k=4$} & \multicolumn{1}{c}{$k=6$} & \multicolumn{1}{c}{$k=8$} & \multicolumn{1}{c}{$k=10$} &\multicolumn{1}{c}{$k=12$} & \multicolumn{1}{c}{$k=14$} & \multicolumn{1}{c}{$k=16$} & \multicolumn{1}{c}{$k=18$} &  \\
    \midrule
    1354pegase & 42.5161 & 9.2024 & 2.9110 & 2.0099 & 1.0506 & 0.4326 & 0.189 & 0.1038 & 0.0494 & \\
    ACTIVSg20000 & 2590.985 & 481.1567 & 133.0116 & 67.344 & 38.3696 & 13.2228 &  7.9999 & 2.8572 & 2.5617 & \\
    2869pegase & 84.6365 & 14.806 & 5.7585 & 3.4687 & 2.4777 & 0.9821 & 0.8868 &  0.4770 & 1.8608 & \\
    6468rte & 232.8488 & 42.6396 & 14.7121 & 7.3318 & 4.1112 & 1.5908 & 1.2435 &  0.3554 & 0.6279 & \\
    ACTIVSg10k & 6394.2841 & 1712.8186 & 475.1414 & 104.5385 & 54.3611 & 17.6938 &  8.4489 & 7.7985 & 4.2688 & \\
    \bottomrule
    \end{tabular}
    }
\end{table}
\endgroup

\begin{figure}
\centering
\includegraphics{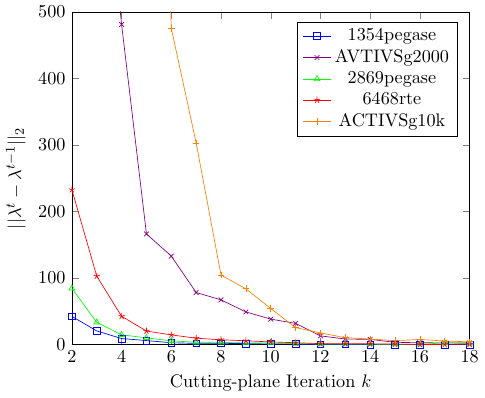} 
\caption{Distance between dual variables of active-power balance constraints of consecutive cutting-plane iterations.}\label{fig:convergence_prices}
\end{figure}

\section{Conclusions and Future Work}\label{section:futurework}

In this paper we present a fast linear cutting-plane method used to obtain tight relaxations for even the largest single and multi-period ACOPF instances, by appropriately outer-approximating the SOC relaxations. Our relaxations can be constructed and solved robustly and quickly via a cutting-plane algorithm that relies on proper cut management and leverages mature linear programming technology. Moreover, we provide a theoretical justification for the tightness of two SOC relaxations for ACOPF as well as for the use of our linearly-constrained relaxations.

The central focus on this paper concerns \textit{reoptimization}. We show that our procedure possesses efficient warm-starting capabilities -- previously computed cuts, for some given instance, can be re-utilized and loaded into new runs of \emph{related} instances, hence leveraging previous computational effort. As a main contribution we demonstrate, through extensive numerical testing in medium to (very) large multi-period instances, that the warm-start feature for our cutting-plane algorithm yields tight and accurate bounds far faster than otherwise possible. It is worth noting that this capability stands in contrast to what is possible using nonlinear (convex) solvers.

We bring forth to the ACOPF literature a discussion on approximate solutions to convex relaxations to ACOPF and their accuracy as lower bounds. We show that linearly-constrained relaxations with convex quadratic objective possess robust theoretical guarantees for bounding challenging optimization problems, in sharp contrast to what nonlinear relaxations such as SOCPs can offer.

We believe our work paves the way for promising new research directions. For instance, since our relaxations are linear they could be deployed for practical pricing schemes which could increase welfare and mitigate biasedness in price signals~\cite{bichler+etal23}. Moreover, we believe our relaxation can be used for harder multi-period ACOPF features such as unit-commitment or security constraints, hence it would be interesting to evaluate its performance on these challenging problems. We are also interested in developing new heuristics for finding multi-period ($T=12$ and $24$ time-periods) ACOPF solutions for large-scale networks exploiting solutions to our linearly-constrained relaxations.

%%%%%%%%%%%%%%%%%%%%%%%%%%%%%%%%%%%%%%%%%%%%%%%%%%%%%%%%%%%%%%%%%%%%%%%%%%%%%%%%
\section*{Acknowledgements}

We would like to thank Erling Andersen, Bob Fourer, Ed Klotz and Richard Waltz.

\newpage

\section{Appendix}\label{appendix}

\subsection{i2 SOCP strictly stronger than Jabr SOCP}\label{appendix:i2strongerjabr}

We numerically show that for case 1354pegase its i2 SOCP relaxation is strictly stronger than its Jabr SOCP relaxation. Knitro attains an optimal solution to the Jabr SOCP of value 74009.28 while an optimal solution to the i2 SOCP has value 74013.68. As a sanity check, we fixed, within tolerance $\pm 10^{-5}$, the solution to the Jabr SOCP in the i2 SOCP formulation;  Gurobi declared the resulting SOCP infeasible and provided the following Irreducible Inconsistent Subsystem (IIS):

\begin{align*}
      i^{(2)}_{549,5002} &= 11822.45384038167 \, v^{(2)}_{549} \\
    &+ 11822.45384038167 \, v^{(2)}_{5002} \\ 
    &- 23644.8888107824 \, c_{549,5002} \\
    &- 29.87235441454166 \, s_{549,5002} \\
     v^{(2)}_{549}  &\geq 1.209989999283128 \\
     v^{(2)}_{5002} &\geq  1.19745626014781 \\
    c_{549,5002} &\leq 1.195643087927643 \\
    s_{549,5002} &\leq 0.0246578041355137 \\
     i^{(2)}_{549,5002} &\leq 39.69.
\end{align*}

\subsection{Definition of the i2 variable}\label{appendix:i2def}

Let the admittance matrix for line $\{km\}$ be
\begin{equation*}
    Y := \begin{pmatrix}
        \left(y + \frac{y^{sh}}{2}\right)\frac{1}{\tau^{2}} & -y \frac{1}{\tau e^{-j\sigma}} \\
        -y \frac{1}{\tau e^{j\sigma}} & y + \frac{y^{sh}}{2}
        \end{pmatrix} 
        = G + j B
\end{equation*}
where $y = g + jb $ denotes it's series admittance, shunt admittance is denoted by $y^{sh} = g^{sh} + j b^{sh}$, and $N := \tau e^{j\sigma}$ denotes the transformer ratio of magnitude $\tau > 0$ and phase shift angle $\sigma$.

Recall that Ohm's Law states that the current flowing from bus $k$ to $m$ via transmission line $\{km\}$ is given by $\begin{pmatrix} I_{km} \\ I_{mk} \end{pmatrix} = Y \begin{pmatrix}
    V_{k} \\
    V_{m}
\end{pmatrix} 
$, hence $I_{km} = \frac{y}{\tau}\left( \frac{1}{\tau}V_{k} - e^{j \sigma} V_{m} \right) + V_{k} \frac{y^{sh}}{2\tau^{2}}$. Therefore, the modulus of $I_{km}$ squared is  
\begin{align}
    |I_{km}|^{2} &= I_{km} I_{km}^{*} \nonumber \\
    & = \left( \frac{y}{\tau}\left( \frac{1}{\tau}V_{k} - e^{j \sigma} V_{m} \right) + V_{k} \frac{y^{sh}}{2\tau^{2}} \right) \left( \frac{y^{*}}{\tau}\left( \frac{1}{\tau}V_{k}^{*} - e^{-j \sigma} V_{m}^{*} \right) + V_{k}^{*} \frac{y^{sh*}}{2\tau^{2}} \right) \nonumber \\
    &= \frac{|y|^{2}}{\tau^{2}} \left| \frac{1}{\tau}V_{k} - e^{j \sigma} V_{m}  \right|^{2} + 2 \text{Re} \left[ \frac{y}{\tau}\left( \frac{1}{\tau}V_{k} - e^{j \sigma} V_{m} \right)  V_{k}^{*} \frac{y^{sh*}}{2\tau^{2}}  \right] \nonumber \\
    &+ \frac{1}{4 \tau^{4}} |y^{sh}|^{2} |V_{k}|^{2}. \label{i2def:firstRHS}
\end{align}

Notice that
\begin{align*}
    \text{Re} \left[ \frac{y}{\tau}\left( \frac{1}{\tau}V_{k} - e^{j \sigma} V_{m} \right)  V_{k}^{*} \frac{y^{sh*}}{\tau^{2}}  \right] &= \frac{1}{\tau^{3}}\text{Re} \left[ y y^{sh*}\left( \frac{1}{\tau}V_{k} - e^{j \sigma} V_{m} \right)  V_{k}^{*}  \right] \\
    &= \frac{1}{\tau^{3}}\text{Re} \left[ y y^{sh*} \left( \frac{1}{\tau}|V_{k}|^{2} - e^{j \sigma} V_{m}V_{k}^{*} \right)   \right] \\
    % &= \frac{1}{\tau^{3}}\text{Re} \left[ y y^{sh*} \left( \frac{1}{\tau}|V_{k}|^{2} -  |V_{k}| |V_{m} | (\cos( \theta_{mk} + \sigma) + j \sin(\theta_{mk} + \sigma)) \right)   \right] \\
    % &= \frac{1}{\tau^{3}}\text{Re} \left[ y y^{sh*} \left( \frac{1}{\tau}|V_{k}|^{2} -  |V_{k}| |V_{m}| \cos( \theta_{km} - \sigma) + j |V_{k}| |V_{m} | \sin(\theta_{km} - \sigma) \right)   \right] \\
    &= \frac{1}{\tau^{3}} [ (gg^{sh} + bb^{sh}) \left( (1/\tau)|V_{k}|^{2} \right.\\
    &\left. - |V_{k}| |V_{m}| \cos( \theta_{km} - \sigma) \right) \\
    &- \left. (bg^{sh} - gb^{sh})|V_{k}| |V_{m} | \sin(\theta_{km} - \sigma)  \right] 
\end{align*}
since $y y^{sh*} = (gg^{sh} + bb^{sh}) + j (bg^{sh} - gb^{sh})$. Moreover, the first term of the RHS of \eqref{i2def:firstRHS} can be written as
\begin{align*}
    \frac{|y|^{2}}{\tau^{2}} \left| \frac{1}{\tau}V_{k} - e^{j \sigma} V_{m}  \right|^{2} &= \frac{|y|^{2}}{\tau^{2}} \left( \frac{1}{\tau^{2}} |V_{k}|^{2} + |V_{m}|^{2} - \frac{2}{\tau} |V_{k}||V_{m}| \cos(\theta_{km} - \sigma) \right)
\end{align*}

Putting everything together yields
\begin{align*}
    |I_{km}|^{2} &= \frac{(g^{2} + b^{2})}{\tau^{2}} \left( \frac{1}{\tau^{2}} |V_{k}|^{2} + |V_{m}|^{2} - \frac{2}{\tau} |V_{k}||V_{m}| \cos(\theta_{km} - \sigma) \right) \\
    &+ \frac{1}{\tau^{3}} \left[ (gg^{sh} + bb^{sh}) \left( \frac{1}{\tau}|V_{k}|^{2} -  |V_{k}| |V_{m}| \cos( \theta_{km} - \sigma) \right) \right. \\
    &- \left. (bg^{sh} - gb^{sh})|V_{k}| |V_{m} | \sin(\theta_{km} - \sigma)  \right] \\
    &+ \frac{1}{4\tau^{4}}(g^{sh2} + b^{sh2})|V_{k}|^{2}.
\end{align*}
Since $|V_{k}| | V_{m}| \cos(\theta_{km} - \sigma) = c_{km} \cos(\sigma) + s_{km} \sin(\sigma)$ and $|V_{k}| | V_{m}| \sin(\theta_{km} - \sigma) = s_{km} \cos(\sigma) - c_{km} \sin(\sigma)$ we can represent $i^{(2)}_{km} := |I_{km}|^{2}$, linearly, in terms of the fundamental variables $v_{k}^{(2)}$,$v_{m}^{(2)}$, $c_{km}$, and $s_{km}$ as
\begin{align}
    i^{2}_{km} &= \frac{(g^{2} + b^{2})}{\tau^{2}} \left( \frac{1}{\tau^{2}} v_{k}^{(2)} + v_{m}^{(2)} - \frac{2}{\tau} (c_{km} \cos(\sigma) + s_{km} \sin(\sigma) ) \right) \nonumber\\
    &+ \frac{1}{\tau^{3}} \left[ (gg^{sh} + bb^{sh}) \left( \frac{1}{\tau}v_{k}^{(2)} -  (c_{km} \cos(\sigma) + s_{km} \sin(\sigma) ) \right) \right. \nonumber\\
    &- \left. (bg^{sh} - gb^{sh}) (s_{km} \cos(\sigma) - c_{km} \sin(\sigma))  \right] \nonumber \\
    &+ \frac{1}{4\tau^{4}}(g^{sh2} + b^{sh2})v_{k}^{(2)} \\
    &= \alpha_{km} v_{k}^{(2)} + \beta_{km} v_{m}^{(2)} + \gamma_{km} c_{km} + \zeta_{km} s_{km}, \label{appendix:i2def_eq}
\end{align}
where 
\begin{align*}
    \alpha_{km} :&= \frac{1}{\tau^{4}} \left( (g^{2} + b^{2}) + (gg^{sh} + bb^{sh}) + \frac{(g^{sh2} + b^{sh2})}{4} \right) \\
    \beta_{km} :&= \frac{(g^{2} + b^{2})}{\tau^{2}} \\
    \gamma_{km} :&= \frac{1}{\tau^{3}} \left( \cos(\sigma)( -2 (g^{2} + b^{2}) - (gg^{sh} + bb^{sh}) ) + \sin(\sigma)(bg^{sh} - gb^{sh}) \right) \\
    \zeta_{km} :&= \frac{1}{\tau^{3}} \left( \sin(\sigma)( -2 (g^{2} + b^{2}) - (gg^{sh} + bb^{sh}) ) - \cos(\sigma)(bg^{sh} - gb^{sh}) \right).
\end{align*}

\subsection{Proof of Proposition~\ref{prop:jabrouter}}\label{appendix:jabr_implies_lossineq}
By \eqref{eq:rotatedrewrite} we have that the Jabr inequality $c_{km}^{2} + s_{km}^{2} \leq v_{k}^{(2)} v_{m}^{(2)}$ can be written as $||(2 c_{km}, 2s_{km}, v_{k}^{(2)} - v_{m}^{(2)} )||_{2} \leq v_{k}^{(2)} + v_{m}^{(2)}$. Hence, taking $\lambda = (1,0,0)^{\top}$, and using \eqref{eq:genericouter} we have
\begin{equation}\label{loss:3}
    v_{k}^{(2)} + v_{m}^{(2)} - 2c_{km} \geq 0.
\end{equation}
On the other hand, by summing up equations~\eqref{SOCP:def_from_activepower} and~\eqref{SOCP:def_to_activepower} we have
\begin{align*}
    \ell_{km} &= G_{kk} v_{k}^{(2)} + G_{mm} v_{m}^{(2)} - 2 G_{km} c_{km} \\
    &\geq \min \{ G_{kk}, G_{mm} \} (v_{k}^{(2)} + v_{m}^{(2)} - 2c_{km})
\end{align*}
given our assumptions on $G_{kk},G_{mm},G_{km}, G_{mk}$. Therefore, \eqref{loss:3} implies $\ell_{km} \ge 0$. 

\subsection{Proof of Proposition~\ref{proposition:i2_implies_lossineq}}\label{appendix:i2_implies_lossineq}
First, we note that if $y_{km}^{sh} = 0$, a simple calculation yields
\begin{align*}
    \ell_{km} &= g_{km} \left| \frac{1}{\tau}V_{k} - e^{j \sigma} V_{m}  \right|^{2}, \\
    I_{km} &= \frac{y}{\tau} \left( \frac{1}{\tau}V_{k} - e^{j \sigma} V_{m} \right)
\end{align*}
see~\ref{appendix:i2def}. Therefore $i^{(2)}_{km} \geq 0$ clearly implies $\ell_{km} \geq 0$.

\subsection{Proof of Proposition~\ref{proposition:projection}}\label{appendix:proofprojection}
    First, we show that for every $(x,s) \in C$ the inequality $(x')^{\top}x \leq || x' || s$ holds. Indeed, by Cauchy-Schwartz inequality we have that $(x')^{\top}x \leq ||x'|| || x ||$, and since $||x|| \leq s$ the inequality follows. Next, we show that the point $(x_{0},s_{0})$ defined by $s_{0} := (||x'|| + s')/2$ and $x_{0} := (s_{0} / ||x'||) x'$ lies on $(x')^{\top} x = ||x'|| s$. Certainly, $(x')^{\top} x_{0} = (s_{0} / ||x'||) ||x'||^{2} = s_{0} ||x'||$. Moreover, $(x_{0},s_{0})$ lies on $C$ since $||x_{0}|| = ||(s_{0}/||x'||) x'|| = s_{0}$. Finally, we show that the vector $(x_{0} - x',s_{0} - s')$ is orthogonal to the plane (it suffices to take the vector $(x_{0},s_{0})$ since $(0,0)$ lies on $C$). It can be readily checked that $||(x_{0},s_{0})||^{2} - (x')^{\top} x_{0} - s's_{0} = 0$. 
 
\subsection{Proof of Proposition~\ref{proposition:thermalcuts}}\label{appendix:proofthermalcuts}
    Since $(x')^{2} + (y')^{2} > r^{2}$, there exists some $0 < t_{0} < 1$ such that $(t_{0} x')^{2} + (t_{0} y')^{2} = r^{2}$. It can be readily checked that $t_{0}(x',y')$ is the projection of $(x',y')$ onto $S_{r}$. Therefore, the normal vector of the separating hyperplane is $(1-t_{0})(x',y')$ and the RHS is $(1-t_{0})t_{0}||(x',y')||_{2}^{2}$, in other words, $(x')x + (y')y \leq r ||(x',y')||_{2}$ is the desired valid inequality since $0<t_{0}<1$.

\subsection{Proof of Proposition~\ref{proposition:boundedcoeffs}}\label{appendix:boundedcoeffs}
Let $\{k,m\} \in \mathcal{E}$ be an arbitrary transmission line, and denote by $A:= g^{2} + b^{2}$, $B:= gg^{sh} + bb^{sh}$ and $C:= (g^{sh2} + b^{sh2})/4$, see~\eqref{appendix:i2def_eq}. We drop the subscripts for clarity of exposition. Then, the coefficients in~\eqref{badi2s} can be written as
\begin{align*}
    \frac{\beta}{\alpha} &= \tau^{2} \frac{A}{A + B + C} \\
    \frac{\gamma}{\alpha} &= \tau \frac{- \cos(\sigma) ( 2 A + B ) + \sin(\sigma) (bg^{sh} - g b^{sh})}{A+B+C} \\
    \frac{\zeta}{\alpha} &= -\tau \frac{\sin(\sigma) ( 2 A + B ) + \cos(\sigma) (bg^{sh} - g b^{sh})}{A+B+C}
\end{align*}
Since we assumed $| b | > b^{sh}$, $g^{sh} = 0$, and $|b| > g$ we have that $A + B > 0$, hence the ratios $\frac{\gamma}{\alpha}$ and $\frac{\zeta}{\alpha}$ can be upper-bounded by
\begin{equation*}
    \tau \left( \frac{2A + |bb^{sh}|}{A + B + C} + \frac{|g b^{sh}|}{A + B + C} \right).
\end{equation*}
In consequence, we have the following upper-bounds
\begin{equation*}
    \frac{\beta}{\alpha} \leq \tau^{2}, \quad \frac{\gamma}{\alpha} \leq 3 \tau, \quad \frac{\zeta}{\alpha} \leq 3\tau.
\end{equation*}

\subsection{Proof of Proposition~\ref{proposition:nastyACOPF}}\label{appendix:nastyACOPF}
Consider a network consisting of two buses and line joining them. There is one generator located at bus $1$ and a load located at bus $2$. The transmission line is simple (i.e., no shunts or transformers) and has series admittance $y = g + jb$ with $g > 0$, $b <0$ and $-b > g$. Assume that the voltages at both buses are fixed to $1$, that reactive power generation is unconstrained and that the line limit is very large. Finally, suppose that the objective consists in minimizing active generation. Additionally, there is no explicit reactive power load. We can make the following observations:
    \begin{enumerate}
        \item Active power balance constraints: \(P_{1,2} = P_{1}^{g}\) and \(P_{2,1} = - P_{2}^{d}\);
        \item Power flows: $P_{1,2} = g - g c_{1,2} - b s_{1,2}$ and $P_{2,1} = g - g c_{1,2} + b s_{1,2}$;
        \item Jabr inequality: $c_{1,2}^{2} + s_{1,2}^{2} \leq 1$.
    \end{enumerate}
    Since reactive power generation is unconstrained and the line limit is very large, we do not have to write reactive power flows. Therefore, we can write the Jabr SOCP for this AC network as
    \begin{subequations}\label{example:nastysocp}
    \begin{align}
        OPT := \min \hspace{2em} g - g &c_{1,2} - b s_{1,2} \nonumber\\
        \text{subject to:}\hspace{3em} & \nonumber\\
        g - g c_{1,2} + b s_{1,2} &= - P^{d}_{2} \label{example:nastysocp_pd}\\
        c_{1,2}^{2} + s_{1,2}^{2} &\leq 1 \label{example:nastycsocp_jabr}
    \end{align}
    \end{subequations}
Since the network is radial, we know that imposing~\eqref{example:nastycsocp_jabr} as an \textit{equation} yields an exact ACOPF formulation~\cite{jabr06,farivar+low13}. Given that~\eqref{example:nastysocp_pd} is linear in $c_{1,2}$, $s_{1,2}$ and constraint~\eqref{example:nastycsocp_jabr} represents the unit circle, for a particular choice of parameters $g,b$, and $P^{d}_{2}$, the line represented by~\eqref{example:nastysocp_pd} will intersect the boundary of the unit circle at exactly two points. These points will be the unique two AC feasible solutions for this instance, while their convex hull represents the feasible region of the SOCP. Finally unless the objective is orthogonal to the hyperplane defined by constraint~\eqref{example:nastysocp_pd} the SOCP optimum will be exactly one of the two AC feasible points. We derive analytically these two AC points. Indeed, by substituting the expression for $s_{1,2}$ in~\eqref{example:nastysocp_pd} into~\eqref{example:nastycsocp_jabr} we obtain
    \begin{align*}
        \left( \frac{g^{2} + b^{2}}{b^{2}} \right) c_{1,2}^{2} - 2 \left( \frac{g}{b} \right) \gamma c + (\gamma^{2} - 1) = 0, \hspace{1em} &\gamma := \frac{P^{d} + g}{b} \\
        \implies c_{1,2} = \frac{(g/b) \gamma \pm \sqrt{\alpha}}{(g^{2} + b^{2})/b^{2}}, \hspace{1em} &\alpha:= 1 + \left(\frac{g}{b}\right)^{2} - \gamma^{2}.
    \end{align*}
    Moreover, solving for $P^{d}$ in terms of $\alpha$ we have
    \begin{equation*}
        P^{d} = - g \pm \sqrt{g^{2} - b^{2}(\alpha-1)}.
    \end{equation*}
Letting $g = 3$, $b = -8$, $\alpha = 3/4$, gives us $P^{d} = 2$ and $\gamma = -5/8$. This choice of parameters implies that the $s_{1,2}$-intercept in the $c_{1,2}-s_{1,2}$ plane of the line~\eqref{example:nastysocp_pd} is strictly between $(-1,1)$, hence there are exactly two AC feasible solutions and moreover they are irrational
    \begin{align*}
        \tilde{c}_{1,2} = \frac{(15/64) + (\sqrt{3}/2)}{(9/64) + 1}, \,\, \tilde{s}_{1,2} = - (3/8) \left( \frac{(15/64) + (\sqrt{3}/2)}{(9/64) + 1} \right) + (5/8) \\
        \hat{c}_{1,2} = \frac{(15/64) - (\sqrt{3}/2)}{(9/64) + 1}, \,\, \hat{s}_{1,2} = - (3/8) \left( \frac{(15/64) - (\sqrt{3}/2)}{(9/64) + 1} \right) + (5/8)
    \end{align*}
Given that the objective is not orthogonal to the linear constraint, and that it lies in the second quadrant, we conclude that the optimal solution to the Jabr SOCP is $(\tilde{c}_{1,2},\tilde{s}_{1,2}) \approx (0.9647,0.2632)$.

We conclude by showing that the dual of~\eqref{example:nastysocp} has a unique maximizer which is irrational as well. Indeed, let $\lambda \in \R$ and $\delta \in \R_{+}$ be the dual variables associated to the constraints~\eqref{example:nastysocp_pd} and~\eqref{example:nastycsocp_jabr}, respectively, and consider the following Lagrangean dual:
\begin{align*}
    L(c_{1,2},s_{1,2}; \lambda, \delta) :&= g - g c_{1,2} - bs_{1,2} + \lambda ( g - g c_{1,2} + b s_{1,2} + P^{d}_{2} ) \\ &+ \delta ( c_{1,2}^{2} + s_{1,2}^{2} - 1)
\end{align*}
The function $L(c_{1,2},s_{1,2}; \lambda, \delta)$ is convex quadratic in the variables $c_{1,2}, s_{1,2}$. Given that Slater's condition holds for~\eqref{example:nastysocp}, we have that an optimal primal-dual pair must satisfy the KKT condition (Lagrangean optimality)
\begin{align}
    OPT &= \min_{c_{1,2},s_{1,2}} \hspace{1em} L(c_{1,2},s_{1,2}; \lambda^{*}, \delta^{*}) \label{KKT1}\\
    &= L(c_{1,2}^{*},s_{1,2}^{*}; \lambda^{*}, \delta^{*}) \label{KKT2}
\end{align}
where $(c_{1,2}^{*},s_{1,2}^{*},\lambda^{*},\delta^{*})$ is an optimal primal-dual pair. It can be readily checked that the first-order stationarity condition implies that
\begin{align}
    2 \delta^{*} c_{1,2}^{*} - g (1 + \lambda^{*}) = 0 \label{example:nastysocp_FOC1}\\
    2 \delta^{*} s_{1,2}^{*} + b (\lambda^{*} - 1) = 0. \label{example:nastysocp_FOC2}
\end{align}
Since we previously showed that the unique minimizer of~\eqref{example:nastysocp} is the irrational solution $(\tilde{c}_{1,2},\tilde{s}_{1,2})$, we have that $c_{1,2}^{*} = \tilde{c}_{1,2}$ and $s_{1,2}^{*} = \tilde{s}_{1,2}$. Therefore, equations~\eqref{example:nastysocp_FOC1} and~\eqref{example:nastysocp_FOC2} imply that $(\lambda^{*},\delta^{*})$ must also be irrational.

\subsection{Proof of Proposition~\ref{proposition:superoptimality}}\label{appendix:superoptimality}
Let $\tilde{x} \in \R^{n}$ such that $A\tilde{x} \geq b - \epsilon e$, where $e$ denotes the all-ones vector. Let $[D]$ denote the Lagrangean Dual problem of $[P]$ on variables $\lambda \in \R^{m}$. Consider a primal-dual optimal pair $(\overline{x},\overline{\lambda}) \in \R^{n} \times \R^{m}$ such that $\overline{\lambda}$ is a rational vector that satisfies $||\overline{\lambda}||_{1} \leq g(A,b,H,c)$, where $g$ is a polynomial in the length of the input $A,b,H,c$. Such a solution $\overline{\lambda}$ exists by strong duality for convex QPs, we assumed $[P]$ is feasible and bounded, and by Section 2 in~\cite{vavasis90}). Strong duality also implies that $(\overline{x},\overline{\lambda})$ satisfies KKT conditions (i) Lagrangean optimality $\nabla f(\overline{x}) = A^{\top} \overline{\lambda}$ and (ii) complementary slackness $\overline{\lambda}^{\top}(b - A\overline{x}) = 0$. In addition, given that $f$ is convex and differentiable we have that $f(\overline{x}) + \nabla f(\overline{x})^{\top} (x - \overline{x}) \leq f(x)$ for every $x \in \R^{n}$. In consequence,
\begin{align*}
    f(\tilde{x}) &\geq f(\overline{x}) + \nabla f(\overline{x})^{\top} (\tilde{x} - \overline{x}) \\
    &= f(\overline{x}) + (A^{\top} \overline{\lambda})^{\top} (\tilde{x} - \overline{x}) \\
    &= f(\overline{x}) + \overline{\lambda}^{\top} (A\tilde{x}) - \overline{\lambda}^{\top} (A\overline{x}) \\
    &\geq f(\overline{x}) + \overline{\lambda}^{\top}(b - A\overline{x}) - \epsilon ||\overline{\lambda} ||_{1} \\
    &= f(\overline{x}) - \epsilon || \overline{\lambda} ||_{1} \\
    &\geq f(\overline{x}) - \epsilon g(A,b,H,c).
\end{align*}

\subsection{Superoptimality and SOCPs}\label{appendix:superopt_socps}
Consider the following SOCP:
% \begin{align*}
% [1] \hspace{5em} \overline{p} := \quad\min \hspace{2em} c^\top &x \\
%     \text{subject to:}\quad\quad& \nonumber\\
%      || A_{i}&x + b_{i} ||_{2} \leq c^{\top}_{i}x + d_{i} \quad i \in [m] \\
%      A&x \geq b
% \end{align*}

\begin{align*}
[1] \hspace{2.5em} \overline{p} := \quad \min \hspace{2em} &c^\top x \\
    \text{subject to:}\quad\quad & \nonumber\\
    || &y_{i} ||_{2} \leq t_{i} \quad i \in [m] \\
    y_{i} = A_{i}&x + b_{i}, \quad t_{i} = c^{\top}_{i}x + d_{i} \quad i \in [m] \\
    A&x \geq b
\end{align*}

\noindent Its dual is given by:
\begin{align*}
[2] \hspace{5em} \overline{d} := \quad \min \hspace{2em} &\lambda^\top b - \sum_{i=1}^{m} (u_{i}d_{i} + v_{i}^{\top} b_{i}) \\
    \text{subject to:}\quad\quad& \nonumber\\
    || &v_{i} ||_{2} \leq u_{i} \quad i \in [m] \\
    c^{\top} = &\lambda^{\top} A + \sum_{i=1}^{m} (v_{i}^{\top} A_{i} + u_{i} c_{i}^{\top}) \\
    &\lambda, u_{i} \geq 0
\end{align*}

\begin{lemma}\label{proposition:superopt_socps}
Suppose that [1] is strictly feasible and $\overline{p} > - \infty$. Then strong duality holds, i.e., $\overline{p} = \overline{d}$, and there exists an optimal primal-dual pair $(\overline{x},(\overline{y}_{i},\overline{t}_{i})_{i\in[m]})$, $(\overline{\lambda}, (\overline{v}_{i},\overline{u}_{i})_{i\in[m]})$. Let $(\tilde{x},(\tilde{y}_{i},\tilde{t}_{i})_{i\in[m]})$ be an $\epsilon$-feasible solution to the primal problem [1], i.e., it satisfies
\begin{equation*}
    \epsilon := \max \left\{ \max \{ b_{i} - A_{i}^{\top} \tilde{x} \, : \, i \in [m] \}, \max \{ || \tilde{y}_{i} ||_{2} - \tilde{t}_{i} \, : \, i \in [m] \} \right\}.
\end{equation*}
Then, 
\begin{equation}
     c^{\top} \tilde{x} \geq \overline{p} - \epsilon ( ||\overline{\lambda}||_{1} + ||\overline{u} ||_{1}).
\end{equation}
\end{lemma}
\begin{proof}
Let $e_{m}$ the all-ones vector in $\R^{m}$. Then,
    \begin{align*}
        c^\top \tilde{x} &= \overline{\lambda}^{\top} A \tilde{x} + \sum_{i=1}^{m} (\overline{v}_{i}^{\top} A_{i}\tilde{x} + \overline{u}_{i} c_{i}^{\top} \tilde{x}) \\
        &\geq \overline{\lambda}^{\top} (b - \epsilon e_{m}) + \sum_{i=1}^{m} (\overline{v}_{i}^{\top}(\tilde{y}_{i} - b_{i}) + \overline{u}_{i} (\tilde{t}_{i} - d_{i}))  \\
        &= \overline{\lambda}^{\top} b - \epsilon ||\overline{\lambda}||_{1} + \sum_{i=1}^{m} ( \overline{v}_{i}^{\top} \tilde{y}_{i} + \overline{u}_{i} \tilde{t}_{i} ) - \sum_{i=1}^{m} (\overline{v}_{i}^{\top} b_{i} + \overline{u}_{i}d_{i}) \\
        &\geq \overline{p} - \epsilon ||\overline{\lambda}||_{1} + \sum_{i=1}^{m} ( \overline{v}_{i}^{\top} \tilde{y}_{i} + \overline{u}_{i} ( ||\tilde{y}_{i}||_{2} - \epsilon ) ) \\
        &= \overline{p} - \epsilon ||\overline{\lambda}||_{1} - \epsilon ||\overline{u} ||_{1} +  \sum_{i=1}^{m} ( \overline{v}_{i}^{\top} \tilde{y}_{i} + \overline{u}_{i} ||\tilde{y}_{i}||_{2} ) \\
        &\geq \overline{p} - \epsilon (||\overline{\lambda}||_{1} + ||\overline{u} ||_{1}),
    \end{align*}
    where the last inequality follows since if $||\overline{v}_{i}||_{2} \leq \overline{u}_{i}$ then $-\overline{v}_{i}^{\top} \tilde{y}_{i} \leq || \overline{v}_{i} ||_{2} || \tilde{y}_{i} ||_{2} \leq \overline{u}_{i} || \tilde{y}_{i} ||_{2} $ for all $i \in [m]$.
\end{proof}

\tiny Wed.Sep..11.151000.2024

\bibliographystyle{spmpsci}  

\bibliography{paper_mpc.bib}

\begin{thebibliography}{10}
\providecommand{\url}[1]{{#1}}
\providecommand{\urlprefix}{URL }
\expandafter\ifx\csname urlstyle\endcsname\relax
  \providecommand{\doi}[1]{DOI~\discretionary{}{}{}#1}\else
  \providecommand{\doi}{DOI~\discretionary{}{}{}\begingroup \urlstyle{rm}\Url}\fi

\bibitem{ahuja+etal93}
Ahuja, R., Magnanti, T., Orlin, J.: Network {Flows}: {Theory}, {Algorithms}, and {Applications}.
\newblock Pearson (1993)

\bibitem{alguacil+conejo00}
Alguacil, N., Conejo, A.: Multiperiod optimal power flow using benders decomposition.
\newblock IEEE Transactions on Power Systems \textbf{15}, 196--201 (2000)

\bibitem{alizadeh+capitanescu23}
Alizadeh, M., Capitanescu, F.: A tractable linearization-based approximated solution methodology to stochastic multi-period ac security-constrained optimal power flow.
\newblock IEEE Transactions on Power Systems \textbf{38}, 5896--5908 (2023)

\bibitem{applegate+etal07}
Applegate, D.L., Cook, W., Dash, S., Espinoza, D.: Exact {Solutions} to {Linear} {Programming} problems.
\newblock Operations Research Letters \textbf{35}, 693–699 (2007)

\bibitem{mosek}
ApS, M.: Mosek for {AMPL} {User’s} {Guide} (2024)

\bibitem{babaeinejadsarookolaee+etal21}
Babaeinejadsarookolaee, S., Birchfield, A., Christie, R., Coffrin, C., DeMarco, C., Diao, R., Ferris, M., Fliscounakis, S., Greene, S., Huang, R., Josz, C., Korab, R., Lesieutre, B., Maeght, J., Mak, T., Molzahn, D., Overbye, T., Panciatici, P., Park, B., Snodgrass, J., Tbaileh, A., Van~Hentenryck, P., Zimmerman, R.: The power grid library for benchmarking ac optimal power flow algorithms.
\newblock arXiv:1908.02788v2  (2021)

\bibitem{bai+etal08}
Bai, X., Wei, H., Fujisawa, K., Wang, Y.: {Semidefinite} {Programming} for {Optimal} {Power} {Flow} problems.
\newblock International Journal of Electrical Power and Energy Systems \textbf{30}, 383--392 (2008)

\bibitem{bai+etal15}
Bai, Y., Zhong, H., Xia, Q., Kang, C., Xie, L.: A decomposition method for network-constrained unit commitment with ac power flow constraints.
\newblock Energy \textbf{88} (2015)

\bibitem{baker20}
Baker, K.: Solutions of {DC} {OPF} {Are} {Never} {AC} {Feasible}.
\newblock In: Proceedings of the Twelfth ACM International Conference on Future Energy Systems, p. 264–268. Association for Computing Machinery (2021)

\bibitem{bental+nemirovski01}
Ben-Tal, A., Nemirovski, A.: On {Polyhedral} {Approximations} of the {Second}-{Order} {Cone}.
\newblock Mathematics of Operations Research \textbf{26}(2), 193--205 (2001)

\bibitem{bergen+vittal99}
Bergen, A.R., Vittal, V.: Power {Systems} {Analysis}.
\newblock Prentice-Hall (1999)

\bibitem{bichler+etal23}
Bichler, M., Knörr, J.: Getting {Prices} right on {Electricity} {Spot} {Markets}: {On} the {Economic} {Impact} of {Advanced} {Power} {Flow} models.
\newblock Energy Economics \textbf{126} (2023)

\bibitem{bienstock15}
Bienstock, D.: Electrical {Transmission} {System} {Cascades} and {Vulnerability}, an {Operations} {Research} viewpoint.
\newblock Society for Industrial and Applied Mathematics (2015)

\bibitem{bienstock+etal20}
Bienstock, D., Escobar, M., Gentile, C., Liberti, L.: {Mathematical} {Programming} {Formulations} for the {Alternating} {Current} {Optimal} {Power} {Flow} problem.
\newblock 4OR \textbf{18}(3), 249--292 (2020)

\bibitem{bienstock+munoz14}
Bienstock, D., Munoz, G.: On {Linear} {Relaxations} of {OPF} {Problems}.
\newblock arXiv:1411.1120 pp. 1--15 (2014)

\bibitem{bienstock+munoz15}
Bienstock, D., Munoz, G.: {Approximate method for AC transmission switching based on a simple relaxation for ACOPF problems}.
\newblock In: 2015 {IEEE} {Power} \& {Energy} {Society} {General} {Meeting} ({PESGM}), pp. 1--5. IEEE (2015)

\bibitem{bienstock+etal23}
Bienstock, D., Pia, A., Hildebrand, R.: Complexity, {Exactness}, and {Rationality} in {Polynomial} {Optimization}.
\newblock Mathematical Programming \textbf{197} (2023)

\bibitem{bienstock+verma19}
Bienstock, D., Verma, A.: Strong {NP}-hardness of {AC} power flows feasibility.
\newblock Operations Research Letters \textbf{47}(6), 494--501 (2019)

\bibitem{bienstock+villagra24}
Bienstock, D., Villagra, M.: {Accurate} and {Warm-Startable} {Linear} {Cutting-Plane} {Relaxations} for {ACOPF}.
\newblock https://optimization-online.org/?p=25584  (2024)

\bibitem{birchfield+etal17}
Birchfield, A., Xu, T., Gegner, K., Shetye, K., Overbye, T.: Grid structural characteristics as validation criteria for synthetic networks.
\newblock IEEE Transactions on Power Systems \textbf{32}, 3258--3265 (2017)

\bibitem{birchfield+etal18}
Birchfield, A., Xu, T., Overbye, T.: Power flow convergence and reactive power planning in the creation of large synthetic grids.
\newblock IEEE Transactions on Power Systems \textbf{33}, 6667--6674 (2018)

\bibitem{bukhsh+etal13}
Bukhsh, W., Grothey, A., McKinnon, K., Trodden, P.: Local {Solutions} of the {Optimal} {Power} {Flow} {Problem}.
\newblock IEEE Transactions on Power Systems \textbf{28}(4), 4780--4788 (2013)

\bibitem{knitro}
Byrd, R., Nocedal, J., Waltz, R.: Knitro: {An} {Integrated} {Package} for {Nonlinear} {Optimization}.
\newblock In: Large-Scale Nonlinear Optimization, vol.~83, pp. 35--59. Springer (2006)

\bibitem{cain+etal12}
Cain, M.B., O’Neill, R.P., Castillo, A.: History of {Optimal} {Power} {Flow} and {Formulations}.
\newblock Federal Energy Regulatory Commission  (2012)

\bibitem{castillo+etal16b}
Castillo, A., Laird, C., Silva-Monroy C.A.~Watson, J., O’Neill, R.: The unit commitment problem with ac optimal power flow constraints.
\newblock IEEE Transactions on Power Systems \textbf{31} (2016)

\bibitem{castillo+etal16}
Castillo, A., Lipka, P., Watson, J., Oren, S., O'Neill, R.: A {Successive} {Linear} {Programming} approach to solving the {IV-ACOPF}.
\newblock IEEE Transactions on Power Systems \textbf{31}(4), 2752--2763 (2016)

\bibitem{castillo+etal16a}
Castillo, A., Lipka, P., Watson, J., Oren, S., O’Neill, R.: A successive linear programming approach to solving the iv-acopf.
\newblock IEEE Transactions on Power Systems \textbf{31} (2016)

\bibitem{coffrin+etal15b}
Coffrin, C., Hijazi, H., Van~Hentenryck, P.: Distflow {Extensions} for {AC} {Transmission} {Systems}.
\newblock arXiv:1411.1120  (2015)

\bibitem{coffrin+etal15a}
Coffrin, C., Hijazi, H., Van~Hentenryck, P.: Network {Flow} and {Copper} {Plate} {Relaxations} for {AC} {Transmission} {Systems}.
\newblock arXiv:1506.05202  (2015)

\bibitem{coffrin+etal16b}
Coffrin, C., Hijazi, H., Van~Hentenryck, P.: Network {Flow} and {Copper} {Plate} {Relaxations} for {AC} {Transmission} {Systems}.
\newblock In: Power Systems Computation Conference (PSCC), pp. 1--8 (2016)

\bibitem{coffrin+etal16a}
Coffrin, C., Hijazi, H.L., Van~Hentenryck, P.: The qc relaxation: A theoretical and computational study on optimal power flow.
\newblock IEEE Transactions on Power Systems \textbf{31}(4), 3008--3018 (2016)

\bibitem{coffrin+vanhentenryck14}
Coffrin, C., Van~Hentenryck, P.: A linear-programming approximation of ac power flows.
\newblock INFORMS Journal on Computing \textbf{26}, 718--734 (2014)

\bibitem{constante+etal22}
Constante-Flores, G.E., Conejo, A.J., Qiu, F.: Ac network-constrained unit commitment via relaxation and decomposition.
\newblock IEEE Transactions on Power Systems \textbf{37}, 2187--2196 (2022)

\bibitem{farivar+etal11}
Farivar, M., Clarke, C.R., Low, S.H., Chandy, K.M.: Inverter {VAR} {Control} for {Distribution} {Systems} with {Renewables}.
\newblock IEEE International Conference on Smart Grid Communications pp. 457--462 (2011)

\bibitem{farivar+low13}
Farivar, M., Low, S.: Branch {Flow} {Model}: {Relaxations} and {Convexification}—{Part} {I}.
\newblock IEEE Transactions on Power Systems \textbf{28}, 2554--2564 (2013)

\bibitem{fliscounakis+13}
Fliscounakis, S., Panciatici, P., Capitanescu, F., Wehenkel, L.: Contingency ranking with respect to overloads in very large power systems taking into account uncertainty, preventive and corrective actions.
\newblock IEEE Trans. on Power Systems pp. 4909--4917 (2013)

\bibitem{ampl}
Fourer, R., Gay, D., Kernighan, B.: {AMPL}: A {Mathematical} {Programing} {Language}.
\newblock Algorithms and Model Formulations in Mathematical Programming \textbf{51} (1989)

\bibitem{gleixner+etal16}
Gleixner, A., Steffy, D., Wolter, K.: Iterative refinement for linear programming.
\newblock INFORMS Journal on Computing \textbf{28}, 449--464 (2016)

\bibitem{glover75}
Glover, F.: Improved {Linear} {Integer} {Programming} {Formulations} of {Nonlinear} {Integer} problems.
\newblock Mgt. Science \textbf{22}, 455--460 (1975)

\bibitem{gribik+etal07}
Gribik, P., Hogan, W., Pope, S.: Market-{Clearing} {Electricity} {Prices} and {Energy} {Uplift}.
\newblock Harvard Electricity Policy Group  (2007)

\bibitem{gurobi}
Gurobi~Optimization, L.: Gurobi {Optimizer} {Reference} {Manual} (2023)

\bibitem{hedman+etal11}
Hedman, K., Oren, S., O'Neill, R.: A {Review} of {Transmission} {Switching} and {Network} {Topology} {Optimization}.
\newblock IEEE Power and Energy Society General Meeting pp. 1--7 (2011)

\bibitem{higham96}
Higham, N.: Accuracy and Stability of Numerical Algorithms.
\newblock SIAM (1996)

\bibitem{hiskens+davy01}
Hiskens, I., Davy, R.: Exploring the {Power} {Flow} {Solution} {Space} {Boundary}.
\newblock IEEE Transactions on Power Systems \textbf{16}(3), 389--395 (2001)

\bibitem{go3}
Holzer, J., Coffrin, C., DeMarco, C., Duthu, R., Elbert, S., Eldridge, B., Elgindy, T., Garcia, M., Greene, S., Guo, N., Hale, E., Lesieutre, B., Mak, T., McMillan, C., Mittelman, H., O'Neill, R., Overbye, T., Palmintier, B., Parker, R., Safdarian, F., Tbaileh, A., Van~Hentenryck, P., Veeramany, A., Wangen, S., Wert, J.: {Grid} {Optimization} {Competition} {Challenge} 3 {Problem} {Formulation}.
\newblock Tech. rep., Pacific Northwest National Laboratory (2023)

\bibitem{jabr06}
Jabr, R.: Radial {distribution} {Load} {Flow} using {Conic} {programming}.
\newblock IEEE Transactions on Power Systems \textbf{21}(3), 1458--1459 (2006)

\bibitem{josz+etal16}
Josz, C., Fliscounakis, S., Maeght, J., Panciatici, P.: Ac power flow data in matpower and qcqp format: itesla, rte snapshots, and pegase.
\newblock arXiv:1603.01533  (2016)

\bibitem{klotz14}
Klotz, E.: Identification, assessment, and correction of ill-conditioning and numerical instability in linear and integer programs.
\newblock Bridging Data and Decisions pp. 54--108 (2014)

\bibitem{kocuk+etal16}
Kocuk, B., Dey, S., Sun, X.: Strong {SOCP} {Relaxations} for the {Optimal} {Power} {Flow} {Problem}.
\newblock Operations Research \textbf{64}(6), 1177--1196 (2016)

\bibitem{kocuk+etal18}
Kocuk, B., Dey, S.S., Sun, X.A.: Matrix {Minor} {Reformulation} and {SOCP}-based {Spatial} {Branch-and-Cut} method for the {AC} {Optimal} {Power} {Flow} problem.
\newblock Mathematical Programming Computation \textbf{10}(4), 557--596 (2018)

\bibitem{lavaei+low12}
Lavaei, L., Low, S.H.: Zero {Duality} {Gap} in {Optimal} {Power} {Flow} problem.
\newblock IEEE Transactions on Power Systems \textbf{27}, 92--107 (2012)

\bibitem{lehmann+etal16}
Lehmann, K., Grastien, A., Van~Hentenryck, P.: {AC}-{Feasibility} on {Tree} {Networks} is {NP}-{Hard}.
\newblock IEEE Transactions on Power Systems \textbf{31}(1), 798--801 (2016)

\bibitem{li+etal21}
Li, H., Yeo, J., Bornsheuer, A., Overbye, T.: The {Creation} and {Validation} of {Load} {Time} {Series} for {Synthetic} {Electric} {Power} {Systems}.
\newblock IEEE Transactions on Power Systems \textbf{36}, 961--969 (2021)

\bibitem{liu+etal18}
Liu, J., Laird, C., Scott, J., Watson, J., Castillo, A.: Global solution strategies for the network-constrained unit commitment problem with ac transmission constraints.
\newblock IEEE Transactions on Power Systems \textbf{34} (2018)

\bibitem{lorca+sun17}
Lorca, A., Sun, X.: The adaptive robust multi-period alternating current optimal power flow problem.
\newblock IEEE Transactions on Power Systems \textbf{33}, 1993--2003 (2017)

\bibitem{mccormick76}
McCormick, G.: Computability of {Global} {Solutions} to {Factorable} {Nonconvex} programs: {Part} {I}—{Convex} underestimating problems.
\newblock Math. Program. \textbf{10}, 147--175 (1976)

\bibitem{mhanna+mancarella22}
Mhanna, S., Mancarella, P.: An {Exact} {Sequential} {Linear} {Programming} {Algorithm} for the {Optimal} {Power} {Flow} {Problem}.
\newblock IEEE Transactions on Power Systems \textbf{37}(1), 666--679 (2022)

\bibitem{mhanna+chapman16}
Mhanna, S., Verbic, G., Chapman, A.C.: Tight {LP} {Approximations} for the {Optimal} {Power} {Flow} problem.
\newblock In: 2016 {Power} {Systems} {Computation} {Conference} ({PSCC}), pp. 1--7. IEEE (2016)

\bibitem{molzahn+hiskens19}
Molzahn, D., Hiskens, I.: A {Survey} of {Relaxations} and {Approximations} of the {Power} {Flow} {Equations}.
\newblock Foundations and Trends in Electric Energy Systems \textbf{4}, 1--221 (2019)

\bibitem{oneill+etal05}
O'Neill, R., Sotkiewicz P, M., Hobbs, B., Rothkopf, M., Stewart, W.R.: Efficient {Market-Clearing} {Prices} in {Markets} with nonconvexities.
\newblock European Journal of Operational Research \textbf{164}(1), 269--285 (2005)

\bibitem{qiu+etal24}
Qiu, G., Tanneau, M., Van~Hentenryck, P.: Dual conic proxies for ac optimal power flow.
\newblock {Power} {Systems} {Computation} {Conference} ({PSCC})  (2024)

\bibitem{quan+etal14}
Quan, R., Jian, J., Mu, Y.: Tighter relaxation method for unit commitment based on second-order cone programming and valid inequalities.
\newblock Electrical Power and Energy Systems \textbf{55} (2014)

\bibitem{schrijver98}
Schrijver, A.: Theory of Linear and Integer Programming.
\newblock Wiley (1998)

\bibitem{shor87}
Shor, N.: Quadratic {Optimization} {Problems}.
\newblock Soviet Journal of Computer and Systems Sciences \textbf{25}, 1--11 (1987)

\bibitem{stott+etal09}
Stott, B., Jardim, J., Alsac, O.: Dc power flow revisited.
\newblock IEEE Transactions on Power Systems \textbf{24} (2009)

\bibitem{subhonmesh+etal12}
Subhonmesh, B., Low, S., Chandy, K.: Equivalence of {Branch} {Flow} and {Bus} {Injection} models.
\newblock In: 2012 50th Annual Allerton Conference on Communication, Control, and Computing (Allerton), pp. 1893--1899 (2012)

\bibitem{taylor+hover11}
Taylor, J.A., Hover, F.S.: {Linear} {Relaxations} for {Transmission} {System} {Planning}.
\newblock IEEE Transactions on Power Systems \textbf{26}(4), 2533--2538 (2011).
\newblock \doi{10.1109/TPWRS.2011.2145395}

\bibitem{vavasis90}
Vavasis, S.: {Quadratic} {Programming} is in {NP}.
\newblock Information Processing Letters \textbf{36}, 73--77 (1990)

\bibitem{verma09}
Verma, A.: Power {Grid} {Security Analysis} : {An} {Optimization} {Approach}.
\newblock Ph.{D}. {Thesis}, Columbia University (2009)

\bibitem{ipopt}
Wächter, A., Biegler, L.T.: On the {Implementation} of a {Primal}-{Dual} {Interior} {Point} {Filter} {Line} {Search} {Algorithm} for {Large}-{Scale} {Nonlinear} {Programming}.
\newblock Mathematical Programming \textbf{106}, 25--57 (2006)

\bibitem{yang+etal2018}
Yang, Z., Zhong, H., Bose, A., Zheng, T., Xia, Q., Kang, C.: A {Linearize} {OPF} {Model} with {Reactive} {Power} and {Voltage} {Magnitude}: {A} {Pathway} to {Improve} the {MW-Only} {DC OPF}.
\newblock IEEE Transactions on Power Systems \textbf{33}, 1734--1745 (2018)

\bibitem{zimmerman+etal11}
Zimmerman, R., Murillo-Sanchez, C., Gan, D.: Matpower, {A} {MATLAB} {Power} {System} {Simulation} {Package}.
\newblock IEE Trans. Power Sys. \textbf{26}, 12--19 (2011)

\bibitem{zohrizadeh+etal20}
Zohrizadeh, F., Josz, C., Jin, M., Madani, R., Lavaei, J., Sojoudi, S.: A {Survey} on {Conic} {Relaxations} of {Optimal} {Power} {Flow} problem.
\newblock European Journal of Operational Research \textbf{287}(2), 391--409 (2020)

\bibitem{zohrizadeh+etal19}
Zohrizadeh, F., Kheirandishfard, M., Nasir, A., Madani, R.: Sequential relaxation of unit commitment with ac transmission constraints.
\newblock IEEE Conference on Decision and Control pp. 2408--2413 (2018)

\end{thebibliography}

\end{document}